\newcommand\mycolor[1]{}
\newtheorem{lemma}[subsection]{Lemma}
\newtheorem{corollary}[subsection]{Corollary}
\newtheorem*{theorem*}{Theorem}
\newtheorem*{proposition*}{Proposition}
\newtheorem{subtheorem}[subsubsection]{Theorem}
\newtheorem{sublemma}[subsubsection]{Lemma}
\newtheorem{subcorollary}[subsubsection]{Corollary}
\newtheorem{subremark}[subsubsection]{Remark}
\newtheorem{subconjecture}[subsubsection]{Conjecture}
\newcommand\testshape{family=\f@family; series=\f@series; shape=\f@shape.}
\def\myemphInternal#1{\if n\f@shape%
\begingroup\itshape #1\endgroup\/%
\else\begingroup\sf\itshape\small #1\endgroup%
\fi}
\def\myemph{\futurelet\testchar\MaybeOptArgmyemph}
\def\MaybeOptArgmyemph{\ifx[\testchar \let\next\OptArgmyemph
                 \else \let\next\NoOptArgmyemph \fi \next}
\def\OptArgmyemph[#1]#2{\index{#1}\myemphInternal{#2}}
\def\NoOptArgmyemph#1{\myemphInternal{#1}}
\newcommand\term[2][\empty]{\myemph[#1]{#2}}
\newcommand\Eman{E}
\newcommand\Lman{L}
\newcommand\Mman{M}
\newcommand\Nman{N}
\newcommand\Qman{Q}
\newcommand\Uman{U}
\newcommand\Xman{X}
\newcommand\Yman{Y}
\newcommand\bC{\mathbb{C}}
\newcommand\bR{\mathbb{R}}
\newcommand\bZ{\mathbb{Z}}
\newcommand\id{\mathrm{id}}          
\newcommand\Int{\mathrm{Int\,}}        
\newcommand\rank{\mathsf{rank\,}}      
\newcommand\restr[2]{#1\vert_{#2}}
\newcommand\GL{\mathrm{GL}}
\newcommand\UnitGroup{\mathbf{1}}
\newcommand\Aut{\mathrm{Aut}}       
\newcommand\Diff{\mathcal{D}}       
\newcommand\DiffPl{\Diff^{+}}       
\newcommand\Orb{\mathcal{O}}        
\newcommand\Stab{\mathcal{S}}       
\newcommand\DiffId{\Diff_{\id}}     
\newcommand\StabId{\Stab_{\id}}     
\newcommand\RP[1]{\mathbb{RP}^{#1}}
\newcommand\Cinfty{\mathcal{C}^{\infty}}
\newcommand\Ci[2]{\mathcal{C}^{\infty}(#1,#2)}               
\newcommand\Cid[2]{\mathcal{C}_{\partial}^{\infty}(#1,#2)}   
\newcommand\Morse[2]{\mathcal{M}(#1,#2)}                     
\newcommand\Stabilizer[1]{\Stab(#1)}
\newcommand\StabilizerId[1]{\StabId(#1)}
\newcommand\StabilizerIsotId[1]{\Stab'(#1)}
\newcommand\StabilizerNbh[1]{\Stab_{{\nb}}(#1)}  
\newcommand\Orbit[1]{\Orb(#1)}
\newcommand\OrbitComp[2]{\Orb_{#1}(#2)}
\newcommand\OrbitPl[1]{\Orb^{+}(#1)}
\newcommand\OrbitPlComp[2]{\Orb^{+}_{#1}(#2)}
\newcommand\fixsymbol{}
\newcommand\invsymbol{\mathrm{inv}}
\newcommand\nbsymbol{\mathrm{nb}}
\newcommand\folsymbol{*}
\newcommand\DiffInv[3][\empty]{\Diff_{\invsymbol}(#2,#3\ifx\empty #1\relax\else,#1\fi)}
\newcommand\DiffPlInv[3][\empty]{\Diff^{+}_{\invsymbol}(#2,#3\ifx\empty #1\relax\else,#1\fi)}
\newcommand\DiffIdInv[3][\empty]{\Diff_{\id,\invsymbol}(#2,#3\ifx\empty #1\relax\else,#1\fi)}
\newcommand\DiffFix[3][\empty]{\Diff_{\fixsymbol}(#2,#3\ifx\empty #1\relax\else,#1\fi)}
\newcommand\DiffIdFix[3][\empty]{\Diff_{\id}(#2,#3\ifx\empty #1\relax\else,#1\fi)}
\newcommand\DiffPlFix[3][\empty]{\Diff^{+}_{\fixsymbol}(#2,#3\ifx\empty #1\relax\else,#1\fi)}
\newcommand\DiffNb[3][\empty]{\Diff_{\nbsymbol}(#2,#3\ifx\empty #1\relax\else,#1\fi)}
\newcommand\DiffIdNb[3][\empty]{\Diff_{\id,\nbsymbol}(#2,#3\ifx\empty #1\relax\else,#1\fi)}
\newcommand\DiffHFix[3][\empty]{\Diff^{0}_{\fixsymbol}(#2,#3\ifx\empty #1\relax\else,#1\fi)}
\newcommand\DiffHNb[3][\empty]{\Diff^{0}_{\nbsymbol}(#2,#3\ifx\empty #1\relax\else,#1\fi)}
\newcommand\DiffPlusFix[3][\empty]{\Diff^{+}_{\fixsymbol}(#2,#3\ifx\empty #1\relax\else,#1\fi)}
\newcommand\FDiff[2][\empty]{\Diff^{\folsymbol}(#2\ifx\empty #1\relax\else,#1\fi)}
\newcommand\FDiffFix[2][\empty]{\Diff^{\folsymbol}_{\fixsymbol}(#2\ifx\empty #1\relax\else,#1\fi)}
\newcommand\FDiffA[2][\empty]{\Diff^{=}(#2\ifx\empty #1\relax\else,#1\fi)}
\newcommand\VBAut[2][\empty]{\GL(#2\ifx\empty #1\relax\else,#1\fi)}
\newcommand\VBAutPl[2][\empty]{\GL^{+}(#2\ifx\empty #1\relax\else,#1\fi)}
\newcommand\DiffLP{\Diff}  
\newcommand\DiffLPInv[3][\empty]{\DiffLP_{inv}(#2,#3\ifx\empty#1\relax\else,#1\fi)}
\newcommand\DiffLPFix[3][\empty]{\DiffLP_{fix}(#2,#3\ifx\empty#1\relax\else,#1\fi)}
\newcommand\DiffLPNb[3][\empty]{\DiffLP_{nb}(#2,#3\ifx\empty#1\relax\else,#1\fi)}
\newcommand\func{f}
\newcommand\gfunc{g}
\newcommand\dif{h}
\newcommand\gdif{g}
\newcommand\px{x}
\newcommand\pu{u}
\newcommand\torus[1]{T^{#1}} 
\newcommand\Torus{T^2} 
\newcommand\Sphere{S^2} 
\newcommand\Mobius{\mathbb{M}}
\newcommand\Circle{S^1}
\newcommand\KleinBottle{\mathbb{K}}
\newcommand{\comment}[1]{}
\newcommand\gel{g}
\newcommand\hel{h}
\newcommand\lel{l}
\newcommand\kel{k}
\newcommand{\nb}{\mathrm{nb}}
\newcommand\Agrp{A}
\newcommand\Bgrp{B}
\newcommand\Ggrp{G}
\newcommand\Hgrp{H}
\newcommand\tang[2][\empty]{\mathsf{T}\ifx\empty\relax\else_{#1}\fi#2}  
\newcommand\tfib[1]{\tang[\!\mathsf{fib}]{#1}}                          
\newcommand\exd[9]{
 #1 \ar@{^(->}[r]\ar@{^(->}[d] & 
 #2 \ar@{^(->}[d]\ar@{->>}[r]  &
 #3 \ar@{^(->}[d] \\
 #4 \ar@{^(->}[r]\ar@{->>}[d] & 
 #5 \ar@{->>}[d]\ar@{->>}[r]  &
 #6 \ar@{->>}[d] \\
 #7 \ar@{^(->}[r] & 
 #8 \ar@{->>}[r]  &
 #9 
}
\newcommand\kMob[1]{\Mobius_{#1}}           
\newcommand\tMob[1]{C_{#1}}   
\newcommand\kSact{F}
\newcommand\tSact{\widehat{\kSact}}
\newcommand\genSSa{\sigma}
\newcommand\genD{\kappa}
\newcommand\genInv{\delta}
\newcommand\actmap{p}
\newcommand\xS{\mathcal{S}}
\newcommand\xD{\mathcal{D}}
\newcommand\xO{\mathcal{O}}
\newcommand\xSa{\mathcal{S}_{\alpha}}
\newcommand\xDa{\mathcal{D}_{\alpha}}
\newcommand\xOa{\mathcal{O}_{\alpha}}
\newcommand\xSM[1]{\mathcal{S}_{#1}}
\newcommand\xDM[1]{\mathcal{D}_{#1}}
\newcommand\xOM[1]{\mathcal{O}_{#1}}
\newcommand\xSanb{\mathcal{S}_{\alpha}^{\nb}}
\newcommand\xSMnb[1]{\mathcal{S}_{#1}^{\nb}}
\newcommand\bdSSa{\partial_{\xS,\xSa}}
\newcommand\bdDS{\partial_{\xD,\xS}}
\newcommand\bdDaSa{\partial_{\xDa,\xSa}}
\newcommand\incSa{i_{\xSa}}
\newcommand\incDaSa{i_{\xDa,\xSa}}
\newcommand\fogD{j_{\xD}}
\newcommand\rstfunc{\rho}
\newcommand\rstdif{\widehat{\rho}}
\newcommand\nbinc{\nu}
\newcommand\Bclass{\mathcal{B}}
\title[Smooth functions on Klein bottle]
{Smooth functions that split a Klein bottle into two M\"obius bands}
\author{Bohdan Mazhar}
\address{Department of Algebra and Topology, Institute of Mathematics of NAS of Ukraine, Teresh\-chenkivska str. 3, Kyiv, 01601, Ukraine}
\email{mazhar@imath.kiev.ua}
\author{Sergiy Maksymenko}
\address{Department of Algebra and Topology, Institute of Mathematics of NAS of Ukraine, Teresh\-chenkivska str. 3, Kyiv, 01601, Ukraine}
\curraddr{}
\email{maks@imath.kiev.ua}
\subjclass[2000]{57S05, 57R45, 37C05}
\keywords{Diffeomorphism, Morse function, Klein bottle, foliation, orbit of group action, stabilizer, fundamental group, homotopy type}
\begin{document}

\begin{abstract}
Given a compact surface $M$, consider the right action $\mathcal{C}^{\infty}(M)\times\mathcal{D}(M)\to\mathcal{C}^{\infty}(M)$, $(f, h) \mapsto f\circ h$, of the group $\mathcal{D}(M)$ of $\mathcal{C}^{\infty}$ diffeomorphisms of $M$ on the space $\mathcal{C}^{\infty}(M)$ of $\mathcal{C}^{\infty}$ functions on $M$.
For $f\in\mathcal{C}^{\infty}(M)$ denote by $\mathcal{O}(f)$ its orbit, and by $\mathcal{O}_f(f)$ the path component of $\mathcal{O}(f)$ containing $f$.

The paper continues a series of computations by many authors of homotopy types of orbits $\mathcal{O}_f(f)$ of smooth functions on compact surfaces.
We provide here the computations of $\mathcal{O}_f(f)$ for a special class of functions $f\in\mathcal{C}^{\infty}(K)$ on the Klein bottle $K$ having the following properties: (i)~at each critical point $f$ is smoothly equivalent to some homogeneous polynomial (e.g.\ $f$ is Morse), and (ii)~there is a regular connected component $\alpha$ of a level set of $f$ such that $K\setminus\alpha$ is a disjoint union of two open M\"obius bands, with closures $M_1$ and $M_2$.
Let $f_i = f|_{M_i}$ be the restriction of $f$ to the M\"{o}bius band $M_i$, $i=1,2$, and $\mathcal{O}_{f_i}(f_i)$ be the path component of $f_i$ in its orbit with respect to the above action of $\mathcal{D}(M_i)$.
The possible homotopy types of $\mathcal{O}_{f_i}(f_i)$ are explicitly computed earlier. 
We prove that $\mathcal{O}_f(f)$ is homotopy equivalent to $\mathcal{O}_{f_1}(f_1) \times \mathcal{O}_{f_2}(f_2)$.
\end{abstract}
\maketitle


\newcommand\pr{p}

\newcommand\cclass[1]{\mathcal{C}^{\infty}(#1)}
\newcommand\cclassr[1]{\cclass{#1,\bR}}

\newcommand\fclass[1]{\mathcal{F}(#1)}
\newcommand\fclassr[1]{\fclass{#1,\bR}}

\newcommand\orb[1]{\mathcal{O}(#1)}
\newcommand\orbsub[2]{\mathcal{O}_{#1}(#2)}
\newcommand\stab[1]{\mathcal{S}(#1)}
\newcommand\stabsub[2]{\mathcal{S}_{#1}(#2)}
\newcommand\stabpr[1]{\mathcal{S}'(#1)}
\newcommand\diff[1]{\mathcal{D}(#1)}
\newcommand\diffid[1]{\mathcal{D}_{\id}(#1)}

\newcommand\orbf{\mathcal{O}(f)}
\newcommand\orbfsubf{\mathcal{O}_{f}(f)}
\newcommand\diffk{\diff{K}}
\newcommand\diffidk{\diffid{K}}

\newcommand\orbmb[1]{\orb{f_{#1}, \partial}}
\newcommand\orbmbnb[1]{\orb{f_{#1}, U_{#1}}}
\newcommand\orbmbsub[1]{\mathcal{O}_{f_{#1}}(f_{#1})}
\newcommand\orbmbnbsub[1]{\orbsub{f_{#1}}{\partial}{f_{#1}}}

\newcommand\orbone{\orbmb{1}}
\newcommand\orbtwo{\orbmb{2}}
\newcommand\orbonenb{\orbmbnb{1}}
\newcommand\orbtwonb{\orbmbnb{2}}

\newcommand\pii{\pi_1}
\newcommand\pio{\pi_0}
\newcommand\hombr[1]{[\{#1\}]}
\newcommand\ts{\tilde\sigma}

\newcommand\krgraphf{\Gamma_f}
\newcommand\graphproj{p_f}
\newcommand\interior{\operatorname{Int}}
\newcommand\f{$f$}
\newcommand\two{$2$}

\section{Introduction}
Let $\Mman$ be a smooth compact $2$-manifold (a surface).
Consider a natural right action of the group $\Diff(\Mman)$ of $\Cinfty$ diffeomorphisms of $\Mman$ on the space of $\Cinfty$ functions $\Ci{\Mman}{\bR}$ given by
\begin{align}\label{intro:equ:action}
	\Ci{\Mman}{\bR} \times \diff{M} \to \Ci{\Mman}{\bR}, &&
    (\func, \dif) \mapsto \func \circ \dif.
\end{align}
Let $\func\in\Ci{\Mman}{\bR}$.
Then the corresponding stabilizer and orbit of $\func$ will be denoted by
\begin{align*}
\Stabilizer{\func} &:= \{\dif\in\Diff(\Mman) \mid \func\circ\dif=\func\}, &
\Orbit{\func}      &:= \{\func\circ\dif \mid \dif\in\Diff(\Mman)\}.
\end{align*}
Let $\Delta_{\func}:=\{ \func^{-1}(c) \}_{c\in\bR}$ be the partition of $\Mman$ into level sets of $\func$.
The connected components of elements of $\Delta_{\func}$ will be called \term{countours}.
A contour is \term{critical} if it contains a critical point of $\func$, and \term{regular} otherwise.
Notice that
\[
    \Delta_{\func\circ\dif} = \{ \dif^{-1}(\func^{-1}(c)) \mid c\in\bR\} = \dif^{-1}(\Delta_{\func}),
    \qquad
    \dif\in\Diff(\Mman).
\]
Hence, one can study the action~\eqref{intro:equ:action} by looking on how the diffeomorphisms of $\Mman$ perturb the partition $\Delta_{\func}$.
In particular, every $\dif\in\Stabilizer{\func}$ leaves invariant each element of $\Delta_{\func}$, and therefore it yields a permutations of contours of each level set of $\func$ sending regular countours to regular, and critical to critical.
For that reason it will be convenient to say that diffeomorphisms from $\Stabilizer{\func}$ \term{preserve} $\func$.

We will endow all spaces of $\Cinfty$ maps with the corresponding $\Cinfty$ Whitney topologies.
Then $\Diff(\Mman)$ and $\Stabilizer{\func}$ are topological groups, and the above action is continuous.
In particular, path components of $\Diff(\Mman)$ (as well as of $\Stabilizer{\func}$ and $\Orbit{\func}$) are pairwise homeomorphic.
Denote by $\DiffId(\Mman)$ the identity path component of $\Diff(\Mman)$ (i.e.\ path component containing $\id_{\Mman}$),
by $\StabilizerId{\func}$ the identity path component of $\Stabilizer{\func}$, and by $\OrbitComp{\func}{\func}$ the path component of $\Orbit{\func}$ containing $\func$.

Let also $\Cid{\Mman}{\bR}$ be the subset of $\Ci{\Mman}{\bR}$ consisting of functions $\func\colon\Mman\to\bR$ being \emph{locally constant and regular on $\partial\Mman$}, i.e.
\begin{enumerate}
\item[(B)] $\func$ takes constant values on every component of the boundary $\partial\Mman$ and has no critical points on it,
\end{enumerate}
and $\fclassr{\Mman} \subset \Cid{\Mman}{\bR}$ be the subset of functions $\func$ satisfying additional \emph{homogeneity} condition:
\begin{enumerate}
\item[(H)] for every critical point $z\in\Mman$ of $\func$, the germ of $\func$ at $z$ is smoothly equivalent to the germ of some nonzero homogeneous polynomial $g_z\colon \bR^2\to\bR$ \term{without multiple factors}.
\end{enumerate}
The latter condition easily implies that every critical point of $\func\in\fclassr{\Mman}$ is isolated.
Hence, due to compactness of $\Mman$, every such $\func$ has only finitely many critical points.
Moreover, by Morse lemma, every non-degenerate critical point satisfies that condition (namely, it is equivalent to the polynomial $\pm x^2 \pm y^2$), and therefore $\fclassr{\Mman}$ contains the set $\Morse{M}{\bR}$ of all Morse functions, constituting an open and everywhere dense subset of $\Cid{\Mman}{\bR}$.
However, $\func\in\fclassr{\Mman}$ may have degenerate critical points.

The paper continues a series of partial computations by many authors of the homotopy types of stabilizers and orbits of functions from $\fclassr{\Mman}$, see
\cite{Maksymenko:AGAG:2006, Kudryavtseva:ConComp:VMU:2012, Kudryavtseva:SpecMF:VMU:2012,
     Kudryavtseva:MathNotes:2012, Kudryavtseva:MatSb:2013, MaksymenkoFeshchenko:UMJ:2014,
     MaksymenkoFeshchenko:MFAT:2015, MaksymenkoFeshchenko:MS:2015, Feshchenko:Zb:2015,
     Feshchenko:MFAT:2016, Feshchenko:PIGC:2019, Maksymenko:TA:2020,
     KravchenkoFeshchenko:MFAT:2020, KuznietsovaMaksymenko:PIGC:2020, Feshchenko:PIGC:2021,
     KuznietsovaSoroka:UMJ:2021, KuznietsovaMaksymenko:TMNA:2025}, see below.
For the moment, those calculations do not completely cover only certain subclasses of functions on $2$-sphere $\Sphere$, projective plane $\RP{2}$, and Klein bottle $\KleinBottle$, and the aim of the present paper is to \term{initiate the calculations for functions on $\KleinBottle$}.

\subsection{Main results}
For the convenience of the reader we will formulate now the main statements, and in next subsection will discuss the previous results.

The following lemma will be proved in Section~\ref{sect:proof:prop:top_decomposition}.
It claims that $\fclassr{\KleinBottle}$ splits into three subclasses depending on the qualitative structure of functions.

\begin{sublemma}\label{intro:lm:top_decomposition}
Let $\KleinBottle$ be a Klein bottle, $\func\in\fclassr{\KleinBottle}$, and $\krgraphf$ be the Kronrod--Reeb graph of $\func$.
Then exactly one of the following statements holds.
\begin{enumerate}[label={\rm(\alph*)}, nosep, itemsep=1ex, leftmargin=*]
\item\label{intro:klein_topmebius}
$\krgraphf$ is a \term{tree} and there exists a \term{regular} contour $\alpha$ of $\func$ such that the complement $\KleinBottle\setminus \alpha$ is a disjoint union of two open M\"{o}bius bands.

\item\label{intro:klein_topdisks}
$\krgraphf$ is a \term{tree} and there exists a unique \term{critical} contour $\beta$ of $\func$ such that the complement $\KleinBottle\setminus \beta$ is a disjoint union of finitely many open disks.

\item\label{into:graph:one_cycle}
$\krgraphf$ has a \term{unique cycle} $\Omega$.
\end{enumerate}
\end{sublemma}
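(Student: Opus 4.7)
The plan is to split the proof into three steps: a homological bound on $b_{1}(\Gamma_{\func})$ that separates case~\ref{into:graph:one_cycle} from the tree cases, an Euler-characteristic classification of edges of $\Gamma_{\func}$ in the tree case that isolates case~\ref{intro:klein_topmebius}, and a combinatorial sink argument for case~\ref{intro:klein_topdisks}.

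For the first step, consider the Reeb-type quotient map $\pi\colon\KleinBottle\to\Gamma_{\func}$ sending each point to its contour. Every loop in $\Gamma_{\func}$ lifts to a loop in $\KleinBottle$ (lift each open edge to a transverse section in the surrounding cylindrical regular region, then close up inside a single contour), so $\pi_{*}\colon H_{1}(\KleinBottle;\bQ)\to H_{1}(\Gamma_{\func};\bQ)$ is surjective; since $H_{1}(\KleinBottle;\bQ)\cong\bQ$, this forces $b_{1}(\Gamma_{\func})\leq 1$. If $b_{1}=1$, then $\Gamma_{\func}$ contains a unique simple cycle $\Omega$, giving case~\ref{into:graph:one_cycle}. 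Otherwise $\Gamma_{\func}$ is a tree. For each edge $e$ fix a regular contour $\alpha_{e}$; the two components $M_{1}^{e}, M_{2}^{e}$ of $\KleinBottle\setminus\alpha_{e}$ are connected open subsurfaces whose closures are compact surfaces with common boundary $\alpha_{e}$, and $\chi(\overline{M_{1}^{e}})+\chi(\overline{M_{2}^{e}})=\chi(\KleinBottle)=0$. The classification of compact surfaces with one boundary circle, combined with the requirement that $\overline{M_{1}^{e}}\cup\overline{M_{2}^{e}}=\KleinBottle$ be non-orientable, leaves precisely two options (up to ordering): either both $M_{i}^{e}$ are open M\"obius bands (\emph{type M}), or one is an open disk and the other is $\KleinBottle$ minus a closed disk (\emph{type D}). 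If some edge is of type M, then its regular contour witnesses~\ref{intro:klein_topmebius}.

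If every edge of $\Gamma_{\func}$ is of type D, orient each edge away from its disk side; each leaf of $\Gamma_{\func}$ (a local extremum sitting inside a small disk) becomes a source, so the finite oriented tree must contain a sink $v^{*}$. Every edge at $v^{*}$ points toward it, hence each complementary component of a tubular neighbourhood of $\beta:=\beta_{v^{*}}$ is a disk, and therefore $\KleinBottle\setminus\beta$ is a disjoint union of open disks, yielding~\ref{intro:klein_topdisks}. For uniqueness of $\beta$, suppose $v^{*}_{1}\neq v^{*}_{2}$ were both sinks; then $\beta_{v^{*}_{1}}$ lies in a single disk component $\widetilde{D}$ of $\KleinBottle\setminus\beta_{v^{*}_{2}}$, and the connected complement $\KleinBottle\setminus\widetilde{D}$ must lie in a single disk component $D'$ of $\KleinBottle\setminus\beta_{v^{*}_{1}}$, exhibiting $\KleinBottle=\widetilde{D}\cup D'$ as a union of two open disks; but Mayer--Vietoris then forces $H_{1}(\KleinBottle)$ to be free abelian, contradicting $H_{1}(\KleinBottle)\cong\bZ\oplus\bZ/2$. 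Mutual exclusivity of the three cases is then immediate, since case~\ref{intro:klein_topdisks} precludes any type-M edge.

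The main obstacle is Step~1: rigorously verifying surjectivity of $\pi_{*}$ requires careful handling of loop lifts through vertices of $\Gamma_{\func}$ corresponding to possibly complicated 1-complex critical contours. Once this is in hand, the remainder reduces to elementary surface topology and finite tree combinatorics.
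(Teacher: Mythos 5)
Your proposal is correct and follows the same skeleton as the paper's proof (surjectivity of $\graphproj_{*}$ on $H_{1}$ to bound $b_{1}(\krgraphf)$ by $1$; in the tree case a dichotomy between the Möbius splitting and the disk/Klein-with-hole splitting for each regular contour; orienting edges and locating a distinguished vertex), but two of the sub-arguments genuinely diverge from the paper's, so a comparison is worthwhile.

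First, where the paper invokes Lickorish's classification of isotopy classes of simple closed curves on $\KleinBottle$ (the paper's Remark~\ref{rem:curves_on_K}) to get the type-M/type-D dichotomy, you derive it directly from $\chi(\overline{M_{1}^{e}})+\chi(\overline{M_{2}^{e}})=0$, the restriction $\chi\le 1$ for a compact surface with one boundary circle, and non-orientability of the union. This is a self-contained and perfectly sound alternative; the only thing worth noting is that you should check (as the paper does in part (i) of its Lemma~\ref{lm:krgraph-orientation}) that the type, or equivalently which side of the edge carries the disk, does not depend on the choice of regular contour $\alpha_{e}$ on the edge; this is needed when you later pass from your chosen $\alpha_{e}$ to the boundary contours of an $f$-regular neighbourhood of $\beta$. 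The verification is short (crossing the cylinder between two regular contours on the same edge preserves the disk side, since a Klein bottle with a hole cannot sit inside a disk), and the paper spells it out.

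Second, the uniqueness of $\beta$ is where you take a genuinely different route. The paper proves that in the ``towards the disk'' orientation every vertex has at most one incoming edge (by observing that two incoming edges would produce two disjoint Klein-bottles-with-hole inside $\KleinBottle$), and then deduces uniqueness of the extremal vertex by a purely combinatorial path argument. You instead argue topologically: two distinct vertices with all-disk complements would cover $\KleinBottle$ by two open disks, which Mayer--Vietoris forbids since $H_{1}(\KleinBottle;\bZ)\cong\bZ\oplus\bZ_{2}$ has torsion. Both arguments are correct; yours is arguably cleaner in that it bypasses the ``at most one incoming edge'' lemma entirely (existence of a sink needs only that a finite oriented tree is acyclic).

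Two small imprecisions, neither fatal. (1) The parenthetical ``(a local extremum sitting inside a small disk)'' is not accurate for general $\func\in\fclassr{\KleinBottle}$: a leaf of $\krgraphf$ is a critical contour whose $\func$-regular neighbourhood deformation retracts onto it, and if that contour is a more complicated $1$-complex the neighbourhood can itself be a Klein bottle with hole, in which case the leaf is a \emph{sink}, not a source. This does not break the argument, because the existence of a sink follows from finiteness and acyclicity regardless of what happens at the leaves, but the stated justification is off. (2) Your worry about Step~1 is unnecessary: the surjectivity of $\graphproj_{*}$ is handled in the paper by exhibiting a homotopy right inverse $s\colon\krgraphf\to\KleinBottle$ of $\graphproj$, which is exactly the ``lift open edges, close up inside connected contours'' construction you describe; connectedness of critical contours is all that is needed, and there is no obstruction at the vertices.
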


Assume that $\func\in\fclassr{\KleinBottle}$ satisfies statement~\ref{intro:klein_topmebius} of Lemma~\ref{intro:lm:top_decomposition}, so there exists a regular contour $\alpha$ of $\func$, such that the complement $\KleinBottle\setminus \alpha$ is a disjoint union of two open M\"{o}bius bands, see Figure~\ref{fig:alpha}.
\begin{figure}[htbp]
\includegraphics[height=2cm]{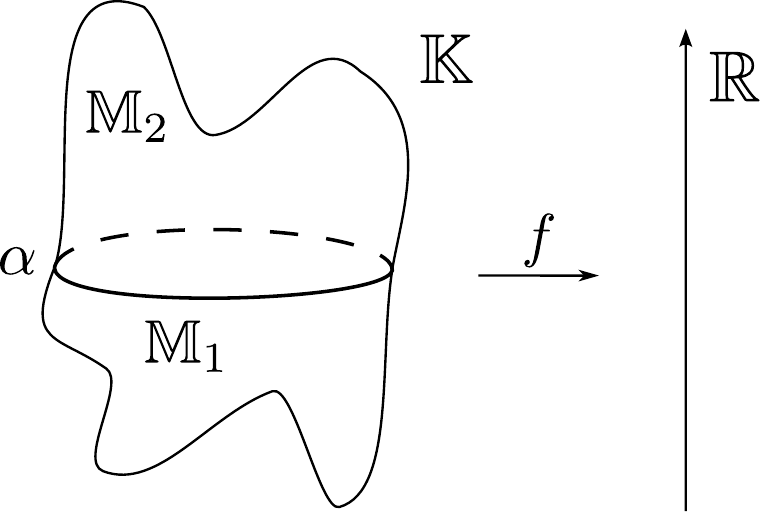}
\caption{Case~\ref{intro:klein_topmebius}: splitting of $\KleinBottle$ into M\"{o}bius bands}\label{fig:alpha}
\end{figure}
Denote by $\kMob{1}$ and $\kMob{2}$ their closures being closed M\"{o}bius bands and let $\func_i = \restr{\func}{\kMob{i}}$, $i=1,2$, be the corresponding restrictions of $\func$.
Since $\alpha$ is a regular contour of $\func$, it follows that $\func_i\in\fclass{\kMob{i}}$, and thus we can consider the orbit $\OrbitComp{\func_i}{\func_i}$ of $\func_i$ with respect to the action of $\Diff(\kMob{i})$.
Note that the possible homotopy types of orbits of functions on a M\"{o}bius band are explicitly described in~\cite{KuznietsovaMaksymenko:TMNA:2025}, see Section~\ref{subsect:algebraic-str-orbits} for more details.

The following Theorem~\ref{intro:th:acylcic_mobius_orbit} claim that $\OrbitComp{\func}{\func}$ is homotopy equivalent to $\OrbitComp{\func_1}{\func_1} \times \OrbitComp{\func_2}{\func_2}$.

Let $\DiffPlFix{\KleinBottle}{\alpha}$ be the group of diffeomorphisms of $\KleinBottle$ fixed on $\alpha$ and leaving invariant $\kMob{1}$ and $\kMob{2}$.
Denote by $\OrbitPl{\func,\alpha}$ the orbit of $\func$ with respect to the action of $\DiffPlFix{\KleinBottle}{\alpha}$, and by $\OrbitPlComp{\func}{\func,\alpha}$ the path component of $\func$ in $\OrbitPl{\func,\alpha}$ containing $\func$.
Then we have another well defined map
\begin{equation}\label{equ:rho_Ofa__Of1_x_Of2}
    \rho\colon \OrbitPl{\func,\alpha} \to \Orbit{\func_1} \times \Orbit{\func_2},
    \qquad
    \rho(\gfunc) = \bigl(\restr{\gfunc}{\kMob{1}},\restr{\gfunc}{\kMob{2}}\bigr).
\end{equation}
Indeed, let $\gfunc\in\Orbit{\func,\alpha}$.
This means that $\gfunc = \func\circ\dif$ for some $\dif\in\DiffFix{\KleinBottle}{\alpha}$.
Then $\dif_i:=\restr{\dif}{\Mobius_i} \in \DiffFix{\Mobius}{\alpha}$ is a diffeomorphism of $\Mobius_i$ fixed on its boundary $\alpha = \partial\Mobius_i$, whence
\[
    \restr{\gfunc}{\Mobius_i}=\restr{\func\circ\dif}{\Mobius_i} =
    \restr{\func}{\Mobius_i}\circ\restr{\dif}{\Mobius_i} = \func_i\circ\dif_i \in \Orbit{\func_i},
    \quad
    i=1,2.
\]

Since, $\rho(\func) = (\func_1,\func_2)$, we see that
\[
    \rho(\OrbitPlComp{\func}{\func,\alpha}) \subset
        \OrbitComp{\func_1}{\func_1} \times \OrbitComp{\func_2}{\func_2}
\]
Let also
\[ j\colon\OrbitPlComp{\func}{\func,\alpha}  \, \subset \, \Orbit{\func} \]
be the natural inclusion.

\begin{subtheorem}\label{intro:th:acylcic_mobius_orbit}
If $\func\in\fclassr{\KleinBottle}$ satisfies the statement~\ref{intro:klein_topmebius} of Lemma~\ref{intro:lm:top_decomposition} then both maps
\[
    \xymatrix{
        \ \OrbitComp{\func}{\func} \
            &
        \ \OrbitPlComp{\func}{\func,\alpha} \
            \ar[r]^-{\rho}
            \ar@{_(->}[l]_-{j}
            &
        \ \OrbitComp{\func_1}{\func_1} \times \OrbitComp{\func_2}{\func_2} \
    }
\]
are homotopy equivalences.
In particular, $\OrbitComp{\func}{\func}$ is homotopy equivalent to $\OrbitComp{\func_1}{\func_1} \times \OrbitComp{\func_2}{\func_2}$.
\end{subtheorem}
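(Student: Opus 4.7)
The plan is to lift both $\rho$ and $j$ to weak equivalences between suitable subgroups of $\Diff(\KleinBottle)$ and then transfer them to the orbits via the standard principal $\Stabilizer{\func}$-bundle $\Diff(\KleinBottle)\to\Orbit{\func}$ (and its analogues on $\kMob{1}$ and $\kMob{2}$). The whole argument stays inside the path component of $\func$, so only path components of the relevant stabilizers have to be tracked.

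For the map $\rho$, I would consider the restriction homomorphism
\[
R\colon \DiffPlFix{\KleinBottle}{\alpha}
   \longrightarrow
\DiffFix{\kMob{1}}{\partial\kMob{1}} \times \DiffFix{\kMob{2}}{\partial\kMob{2}},
\qquad
R(\dif) = \bigl(\restr{\dif}{\kMob{1}}, \restr{\dif}{\kMob{2}}\bigr).
\]
The image of $R$ consists of those pairs whose $\infty$-jets agree along $\alpha$; a standard collaring argument identifies this image as a weak deformation retract of the whole product, and $R$ is a weak equivalence onto its image. Intersecting with stabilizers, $R$ sends $\Stabilizer{\func}\cap\DiffPlFix{\KleinBottle}{\alpha}$ to $\Stabilizer{\func_1,\partial\kMob{1}}\times\Stabilizer{\func_2,\partial\kMob{2}}$, and because $\func$ is a submersion in a tubular neighborhood of the regular contour $\alpha$ the same jet-matching retraction can be performed inside the stabilizer. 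Applying the five-lemma to the long exact sequences of the principal bundles $\Stab\to\Diff\to\Orb$ and restricting to path components of $\func$ and $(\func_1,\func_2)$ yields that $\rho$ is a weak equivalence onto $\OrbitComp{\func_1}{\func_1}\times\OrbitComp{\func_2}{\func_2}$.

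For the inclusion $j$, the plan is to build an explicit deformation retraction of $\OrbitComp{\func}{\func}$ onto $\OrbitPlComp{\func}{\func,\alpha}$. Given $\gfunc = \func\circ\dif \in \OrbitComp{\func}{\func}$, the curve $\dif^{-1}(\alpha)$ is a regular contour of $\gfunc$ isotopic to $\alpha$ in $\KleinBottle$. Using the tubular-neighborhood model $U\cong\alpha\times(-\eps,\eps)$ in which $\func$ is the projection on the second factor, one straightens $\dif^{-1}(\alpha)$ back to $\alpha$ by a canonical ambient isotopy, and then further adjusts the restriction on $\alpha$ to the identity by a collar-supported isotopy. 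The resulting family continuously deforms $\gfunc$ to an element of $\OrbitPlComp{\func}{\func,\alpha}$; since each step is trivial on the subspace $\OrbitPlComp{\func}{\func,\alpha}$, the map is a deformation retraction relative to it.

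The main obstacle I anticipate is carrying out these constructions \emph{parametrically} and \emph{equivariantly} with respect to the stabilizer condition on $\func$: both the jet-matching retraction used for $\rho$ and the straightening isotopy used for $j$ must preserve the conditions $\func_i\circ\dif_i = \func_i$ near $\alpha$. Because $\func$ is regular on $U$, this reduces to a local question about submersions, but it must be patched globally with the behaviour of $\func$ near its possibly degenerate (homogeneous-type) critical points in $\kMob{1}$ and $\kMob{2}$, and carried out coherently with the principal-bundle structures used in the five-lemma step.
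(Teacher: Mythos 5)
Your treatment of $\rho$ is essentially the paper's argument: pass through the principal fibrations $\actmap$ and $\actmap_1\times\actmap_2$ to reduce the claim to a comparison of (path components of) stabilizers, then observe that restriction becomes an honest isomorphism once ``fixed on $\alpha$'' is replaced by ``fixed near $\alpha$''. The step you flag as the main obstacle --- performing the collar/jet-matching retraction \emph{inside} the stabilizer --- is exactly what the quoted result on $\func$-regular neighborhoods supplies: since $\alpha$ is a regular contour, Lemma~\ref{lm:regular-invariant-neigh-homotopy-eq} makes the inclusions $\StabilizerNbh{\func,\alpha}\subset\Stabilizer{\func,\alpha}$ and $\StabilizerNbh{\func_i,\partial\kMob{i}}\subset\Stabilizer{\func_i,\partial\kMob{i}}$ homotopy equivalences, and on the ``nb'' subgroups restriction is literally an isomorphism of topological groups. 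Together with Theorem~\ref{th:diff_act}\ref{enum:th:diff_act:OffX__Off} (to identify $\OrbitComp{\func_i}{\func_i}$ with $\OrbitComp{\func_i}{\func_i,\partial\kMob{i}}$) and asphericity of the orbits, this half goes through as you sketch it.

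The argument for $j$ has a genuine gap. There is no canonical, continuous-in-$\gfunc$ straightening of $\dif^{-1}(\alpha)$ back to $\alpha$ (that curve need not even lie in the tubular neighborhood $U$), and, more seriously, there is no canonical choice of isotopy from the resulting reparametrization of $\alpha$ to $\id_{\alpha}$: the ambiguity in each of these choices is governed by $\DiffPl(\alpha)\simeq SO(2)$ and $\DiffId(\KleinBottle)\simeq SO(2)$, neither of which is contractible, so the space of admissible ``adjustments'' is disconnected (a torsor over $\pi_1 SO(2)\cong\bZ$) and cannot simply be chosen coherently over all of $\OrbitComp{\func}{\func}$. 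Whether these ambiguities cancel is precisely the content of the theorem, not an input to it. Concretely, $\pi_1\OrbitPlComp{\func}{\func,\alpha}\cong\pi_0\StabilizerIsotId{\func,\alpha}$ sits in an extension $0\to\bZ\to\pi_0\StabilizerIsotId{\func,\alpha}\to\pi_0\StabilizerIsotId{\func}\to 1$ coming from the restriction fibration $\Stabilizer{\func}\to\Diff(\alpha)$, while $\pi_1\OrbitComp{\func}{\func}$ sits in $0\to\pi_1\DiffId(\KleinBottle)\to\pi_1\OrbitComp{\func}{\func}\to\pi_0\StabilizerIsotId{\func}\to 1$; the inclusion $j$ is an equivalence because the $\bZ$ generated by the loop of rotations of $\alpha$ \emph{inside the stabilizer} (the twist $\Sigma$ supported in an $\func$-regular neighborhood of $\alpha$) maps isomorphically onto $\pi_1\DiffId(\KleinBottle)\cong\bZ$, which is verified via Corollary~\ref{cor:isot_klein_bottle} by tracking the trace of a point of $\alpha$. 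Your proposal never performs this computation; as written, the same ``soft'' retraction argument would prove the analogous statement for an arbitrary surface and an arbitrary invariant separating regular contour, which fails in general because $\pi_1\DiffId(\Mman)$ need not be matched by rotations along a single curve.
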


It is known that $\OrbitComp{\func}{\func}$ and $\OrbitComp{\func_i}{\func_i}$, $i=1,2$, are aspherical, see Corollary~\ref{cor:pik_Of} below, and therefore their homotopy types are determined by the corresponding fundamental groups.
The algebraic structure of groups $\pi_1\OrbitComp{\func_i}{\func_i}$ for functions on M\"{o}bius bands is described in~\cite{KuznietsovaMaksymenko:TMNA:2025}.
Then Theorem~\ref{intro:th:acylcic_mobius_orbit} allows to describe the algebraic structure of $\pi_1\OrbitComp{\func}{\func} \cong \pi_1\OrbitComp{\func_1}{\func_1} \times \pi_1\OrbitComp{\func_2}{\func_2}$, see Corollary~\ref{cor:K_pi1Of_case_a} below.

The proof of Theorem~\ref{intro:th:acylcic_mobius_orbit} will be given in Section~\ref{sect:proof:intro:th:acylcic_mobius_orbit}.
Namely, statement about $j$ is proved in Lemma~\ref{lm:inc_Oa_O}, while statement about $\rho$ is established in Lemma~\ref{lm:rest_Oa_O1_O2}.

The other two classes of functions $\func\in\fclass{\KleinBottle}$ from Lemma~\ref{intro:lm:top_decomposition} require different technique and will be investigated in another paper.

\section{Deformations of functions on surfaces by diffeomorphisms}
In what follows $\Mman$ will be a compact surface, $\Circle = \{|z|=1\} \subset \bC$ the unit circle in the complex plane, $\Mobius$ the M\"{o}bius band, $\KleinBottle$ the Klein bottle, $\torus{0}$ a one-point space, and $\torus{k} = (\Circle)^k$, $k\geq1$, is the \term{$k$-torus} (a product of $k$ circles).
Also the sign $\simeq$ means a \term{homotopy equivalence}.

We will discuss here known results about the homotopy types of stabilizers and orbits of functions from class $\fclass{\Mman}$.

Let $\Xman \subset \Mman$ be a closed subset.
Then we will denote by $\DiffFix{\Mman}{\Xman}$ the group of diffeomorphisms pointwise fixed on $\Xman$, by $\DiffNb{\Mman}{\Xman}$ be its subgroup consisting of diffeomorphisms $\dif$ each being fixed on some neighborhood of $\Xman$ (depending on $\dif)$, and by $\DiffIdFix{\Mman}{\Xman}$ its identity path component.
Let also $|\Xman|$ be the total number of points in $\Xman$, so it takes values in $\{0,1,\ldots,\infty\}$.

Furthermore, for $\func\in\Ci{\Mman}{\bR}$ we will use the following notations:
\begin{itemize}[leftmargin=*]
\item $\Sigma_{\func}$ is the set of all critical points of $\func$;
\item $\Stabilizer{\func,\Xman}$ and $\Orbit{\func,\Xman}$ are the corresponding stabilizer and orbit of $\func\in\Ci{\Mman}{\bR}$ under the induced action~\eqref{intro:equ:action} of $\DiffFix{\Mman}{\Xman}$;
\item $ \StabilizerNbh{\func,\Xman} =  \Stabilizer{\func} \cap \DiffNb{\Mman}{\Xman}$;
\item $\StabilizerId{\func,\Xman}$ and $\OrbitComp{\func}{\func,\Xman}$ are the corresponding path components of $\Stabilizer{\func,\Xman}$ and $\Orbit{\func,\Xman}$ containing $\id_{\Mman}$ and $\func$ respectively;
\item $\StabilizerIsotId{\func,\Xman} := \Stabilizer{\func} \cap \DiffIdFix{\Mman}{\Xman}$ is the group of diffeomorphisms of $\Mman$ preserving $\func$ and isotopic to identity via isotopy which \term{does not necessarily} preserve $\func$.
\end{itemize}
Note that, in contrast to $\StabilizerIsotId{\func,\Xman}$, the identity path component $\StabilizerId{\func,\Xman}$ consists of diffeomorphisms isotopic to identity via isotopy which \term{does} preserve $\func$.

\subsection{$\func$-regular neighborhoods}\label{subsect:f-reg-nbh}
Let $\func\in\fclassr{\Mman}$.
Recall that by a \term{contour} of $\func$ we mean a connected component of some level set $\func^{-1}(c)$, $c\in\bR$.

A subset $\Xman\subset\Mman$ will be called \term{$\func$-adapted} if it is a union of contours of $\func$.
One can observe that a compact $2$-dimensional submanifold $\Xman\subset\Mman$ is $\func$-adapted if and only if its boundary components are contours of $\func$.
A typical example of a $1$-dimensional $\func$-adapted submanifold is a finite union of contours.

Now if $\Xman\subset \Mman$ is an $\func$-adapted submanifold, then an \term{$\func$-regular neighborhood} of $\Xman$ is an any compact $\func$-adapted subsurface $\Uman\supset \Xman$ being a neighborhood of $\Xman$ and such that $\Uman\setminus \Xman$ contains no critical points of $\func$.

\begin{sublemma}[{\rm\cite[Corollary~7.2]{Maksymenko:TA:2020}}]
\label{lm:regular-invariant-neigh-homotopy-eq}
Let $\func\in\fclassr{\Mman}$, $\Xman$ be an $\func$-adapted submanifold, and $\Uman$ be any  $\func$-regular neighborhood of $\Xman$.
Then the inclusions
\[
    \Stabilizer{\func,\Uman}    \ \subset \
    \StabilizerNbh{\func,\Xman} \ \subset \
    \Stabilizer{\func,\Xman}
\]
are homotopy equivalences.
\end{sublemma}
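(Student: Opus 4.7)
The plan is to construct explicit deformation retractions along both nested inclusions using the collar structure available on the regular part of $\Uman$. The geometric input is that $\Uman$ being $\func$-regular means $\func$ has no critical points on $\Uman\setminus\Xman$ and that both $\Xman$ and $\partial\Uman$ are unions of contours of $\func$; consequently the closure of each connected component of $\Uman\setminus\Xman$ is a trivial $I$-bundle of level sets of $\func$. Fixing an auxiliary Riemannian metric on $\Mman$ and using the reparametrized gradient $\nabla\func/\|\nabla\func\|^2$, I would produce collar coordinates $(y,s)$ on $\overline{\Uman\setminus\Xman}$ in which $\func(y,s)$ depends only on $s$ and in which every $\dif\in\Stabilizer{\func,\Xman}$ takes the product form $\dif(y,s)=(\psi_s(y),s)$ with $\psi_0=\id$.

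For the right inclusion $\StabilizerNbh{\func,\Xman}\subset\Stabilizer{\func,\Xman}$, given $\dif\in\Stabilizer{\func,\Xman}$ I would pick a cutoff $\chi\colon[0,1]\to[0,1]$ with $\chi\equiv 1$ near $0$ and $\chi\equiv 0$ near $1$, and consider the homotopy $\dif_\tau(y,s):=(\psi_{(1-\tau\chi(s))s}(y),s)$ on the collar, extended by $\dif$ elsewhere. At $\tau=1$ this diffeomorphism coincides with the identity on a neighborhood of $\Xman$, so lies in $\StabilizerNbh{\func,\Xman}$; the construction is continuous in $\dif$ and trivial on $\StabilizerNbh{\func,\Xman}$, giving a strong deformation retraction. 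For the left inclusion $\Stabilizer{\func,\Uman}\subset\StabilizerNbh{\func,\Xman}$ I would apply the same collar technique between any two nested $\func$-regular neighborhoods $\Xman\subset\Vman\subset\Uman$: the cut-off construction on the collar of $\overline{\Uman\setminus\Vman}$ shows that $\Stabilizer{\func,\Uman}\hookrightarrow\Stabilizer{\func,\Vman}$ is a homotopy equivalence. Since $\StabilizerNbh{\func,\Xman}=\bigcup_{\Vman}\Stabilizer{\func,\Vman}$ and any continuous map from a compact test space into $\StabilizerNbh{\func,\Xman}$ factors through some $\Stabilizer{\func,\Vman}$ by compactness, this yields the required homotopy equivalence.

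The principal obstacle is smoothness across $\Xman$ when $\Xman$ meets a critical contour of $\func$: the collar coordinates degenerate there and the isotopy $\psi_s$ need not depend smoothly on $s$ as $s\to 0^+$, so the naive reparametrization breaks differentiability of $\dif_\tau$ at $\Xman$. The standard workaround, which I expect to adopt, is to represent $\dif$ via time-dependent $\func$-preserving vector fields on $\Uman\setminus\Xman$, damp them by a cutoff that vanishes to infinite order along $\Xman$, and integrate the damped field; the resulting map depends continuously on $\dif$ in the Whitney $\Cinfty$ topology and extends smoothly by the identity past $\Xman$. Verifying continuous integrability uniformly in $\dif$ — i.e.\ that every $\func$-preserving diffeomorphism fixed on $\Xman$ can be canonically written as the time-$1$ flow of such a vector field in a neighborhood of $\Xman$, depending smoothly on $\dif$ — is the technical core of the argument.
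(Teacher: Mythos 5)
The paper does not prove this lemma at all: it is imported verbatim from \cite[Corollary~7.2]{Maksymenko:TA:2020}. Your proposal therefore has to stand on its own, and while the overall strategy (product structure on the regular collar, then damping the diffeomorphism towards the identity near $\Xman$) is the right one and matches the spirit of the cited proof, there are two genuine gaps. The main one is the step you yourself flag and then defer. When $\Xman$ contains a critical contour, the closure of a component of $\Uman\setminus\Xman$ is not a collar: it is a half-open annulus $\Circle\times(0,1]$ whose missing boundary circle is crushed non-injectively onto a subgraph of $\Xman$, so the coordinates $(y,s)$ do not extend to $s=0$ and the formula $\dif_\tau(y,s)=(\psi_{(1-\tau\chi(s))s}(y),s)$ has no a priori reason to be smooth (or even $C^1$) across $\Xman$, nor to depend continuously on $(\tau,\dif)$ in the Whitney $\Cinfty$ topology. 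Your proposed repair --- write $\dif$ near $\Xman$ as the time-one map of a time-dependent $\func$-preserving vector field, damp by a flat cutoff, and reintegrate --- is essentially a restatement of the difficulty rather than its resolution: proving that every $\dif\in\Stabilizer{\func,\Xman}$ admits such a representation canonically and continuously in $\dif$, with the damped object still smooth and $\func$-preserving at $\Xman$, is precisely the content of the ``shift function'' machinery of \cite{Maksymenko:OsakaJM:2011} (one fixes a single vector field tangent to the contours, writes $\dif$ as a shift along its orbits by a variable function $\sigma$, and uses hypothesis (H) to show $\sigma$ extends smoothly across the singular set). Without that input the argument is not complete. For the only case actually used in this paper ($\Xman=\alpha$ a regular contour) the collar is genuine and your first-paragraph formula does work.

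The second gap is the passage from nested regular neighborhoods to $\StabilizerNbh{\func,\Xman}$. Each $\Stabilizer{\func,\Vman}$ is a \emph{closed}, not open, subgroup of $\StabilizerNbh{\func,\Xman}$ in the subspace topology inherited from $\Diff(\Mman)$, so a compact subset of the union need not lie in any single stage: a sequence $\dif_n\to\id_{\Mman}$ with $\dif_n$ fixed exactly on the $n$-th neighborhood and not on the $(n-1)$-st, together with its limit, is a compact counterexample. The subspace topology on $\StabilizerNbh{\func,\Xman}$ is not the colimit topology of the $\Stabilizer{\func,\Vman}$, so the ``compact test space factors through a stage'' argument is unavailable. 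The standard way around both issues at once is to construct a single damping homotopy of $\Stabilizer{\func,\Xman}$ that compresses it into $\Stabilizer{\func,\Uman}$ while preserving the subgroup $\StabilizerNbh{\func,\Xman}$ at every time; then both inclusions induce isomorphisms on all homotopy groups simultaneously and no direct-limit argument is needed.
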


\subsection{Graph of a function}\label{subsect:graph-action}
Consider the partition $\krgraphf$ of $\Mman$ into \term{contours}, and let $\graphproj\colon\Mman\to\krgraphf$ be the corresponding quotient map associating to each $x\in\Mman$ the contour $\graphproj(x)$ containing $x$.
Endow $\krgraphf$ with the quotient topology with respect to $\graphproj$, so a subset $A\subset\krgraphf$ is open iff its inverse image $\graphproj^{-1}(A)$ is open in $\Mman$.
Then $\krgraphf$ is called the \term{Kronrod--Reeb graph}, or simply the \term{graph} of $f$.

Notice that we have the following decomposition of $\func$:
\[
	\func \colon \Mman \xrightarrow{~\graphproj~} \krgraphf \xrightarrow{~\hat{\func}~} \bR,
\]
for a unique continuous function $\hat{\func}\colon\krgraphf\to\bR$.

Let $\Lman$ be the union of critical contours of $\func$.
Then it is easy to see that $\krgraphf$ has structure of $1$-dimensional CW complex whose \term{vertices} ($0$-cells) are critical contours of $\func$, while \term{edges} (open $1$-cells) correspond to connected components of $\Mman\setminus\Lman$.
Moreover, the function $\hat{\func}$ is strictly monotone on open edges of $\krgraphf$.

The following statement is well-known and follows from the easy observation that $\graphproj$ admits an ``homotopy right inverse''  map $s\colon\krgraphf\to\Mman$, i.e.\ $\graphproj\circ s$ is homotopic to $\id_{\krgraphf}$.
\begin{sublemma}\label{lm:fgraph_on_K}
For each $\func\in\fclass{\Mman}$, then induced homomorphism $\graphproj\colon H_1(\Mman,\bZ) \to H_1(\krgraphf,\bZ)$ of the first integral homologies is \term{surjective}.

In particular, if $\Mman=\KleinBottle$, then $\rank H_1(\krgraphf,\bZ) \leq 1$, so $\krgraphf$ is either a \term{tree} (has no cycles) or \term{contains a unique cycle}.
\end{sublemma}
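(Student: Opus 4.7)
The plan is to build a continuous map $s\colon\krgraphf\to\Mman$ satisfying $\graphproj\circ s=\id_{\krgraphf}$, that is, a genuine section of the quotient map $\graphproj$. Once such an $s$ is produced, passing to first homology gives $\graphproj_{*}\circ s_{*}=\id$ on $H_1(\krgraphf,\bZ)$, so $\graphproj_{*}\colon H_{1}(\Mman,\bZ)\to H_{1}(\krgraphf,\bZ)$ admits a right inverse and is therefore surjective, which is the first assertion of the lemma.

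\textbf{Construction of $s$.} Let $\Lman\subset\Mman$ be the union of all critical contours of $\func$. Every component $\Uman$ of the complement $\Mman\setminus\Lman$ contains no critical points of $\func$, so $\restr{\func}{\Uman}$ is regular; because $\Uman$ is a connected component of a level-set complement, its regular contours foliate $\Uman$ as a product $\Circle\times(0,1)$ with $\func$ corresponding, up to a homeomorphism of the second factor, to the projection to $(0,1)$. In each such $\Uman$ pick a smooth transversal arc that meets every contour of $\Uman$ in a unique point; this defines $s$ on the interior of the edge of $\krgraphf$ corresponding to $\Uman$. For each vertex $v_{K}$ of $\krgraphf$ (corresponding to a critical contour $K$) choose a point $x_{K}\in K$, set $s(v_{K})=x_{K}$, and arrange each transversal on an open edge adjacent to $v_{K}$ to converge to $x_{K}$ at the relevant end. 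Continuity of $s$ then follows from the definition of the quotient topology on $\krgraphf$, and $\graphproj\circ s=\id_{\krgraphf}$ by construction.

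\textbf{The Klein bottle consequence.} For $\Mman=\KleinBottle$ we have $H_{1}(\KleinBottle,\bZ)\cong\bZ\oplus\bZ/2\bZ$, whose free rank is $1$. Since $\krgraphf$ is a one dimensional CW complex, the group $H_{1}(\krgraphf,\bZ)$ is free abelian, and any surjective homomorphism from $\bZ\oplus\bZ/2\bZ$ to a free abelian group kills the torsion part and therefore has image of rank at most $1$. Hence $\rank H_{1}(\krgraphf,\bZ)\leq 1$, and so $\krgraphf$ is either a tree or has a unique independent cycle.

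\textbf{Main obstacle.} The delicate point is the continuity of $s$ at the vertices of $\krgraphf$. A single critical contour $K$ can have several edges of $\krgraphf$ attached to it, and $K$ itself may be a bouquet of circles joined at critical points of $\func$; the qualitative way nearby regular contours accumulate on $K$ depends on those local singularities. Hypothesis (H) supplies explicit local models for $\func$ at each critical point, and by taking $x_{K}$ at a suitably chosen critical point of $\func$ on $K$ and using these models one can orient each transversal so that it extends continuously to $x_{K}$. Arranging these choices consistently over all vertices is the main technical bookkeeping in the argument, but it is purely local and uses no global structure of $\Mman$.
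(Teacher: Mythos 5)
The overall strategy, and the easy part of the argument (a right inverse on $H_1$ forces surjectivity, and the rank computation for the Klein bottle), are fine. The gap is in the claim that $\graphproj$ admits a \emph{genuine} section $s$ with $\graphproj\circ s=\id_{\krgraphf}$. Your construction requires, for each vertex $v_K$ with critical contour $K$, a single point $x_K\in K$ lying in the closure of \emph{every} cylinder adjacent to $K$. When $K$ contains a single critical point of $\func$ (e.g.\ for a generic Morse function) this works. But $\fclass{\Mman}$ allows several critical points of $\func$ on the same critical contour, so $K$ is a genuine graph with several vertices, and each adjacent cylinder $\Uman$ accumulates only onto the cycle $K_{\Uman}\subset K$ that its regular contours collapse onto as the level tends to $\func(K)$. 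Different cylinders may collapse onto disjoint sub-cycles of $K$: for instance if $K$ consists of two saddle points $p,q$ joined by two arcs, with a loop $a$ at $p$ and a loop $b$ at $q$ (all vertex valences are even, so this is a legitimate local model), then one cylinder may collapse onto $a$ and another onto $b$, and no single $x_K$ lies in both closures. Your ``Main obstacle'' paragraph asserts this is just bookkeeping, but it is a real obstruction: a set-theoretic section may not exist.

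The paper sidesteps this by only asking for a \emph{homotopy} right inverse, i.e.\ a map $s\colon\krgraphf\to\Mman$ with $\graphproj\circ s$ merely \emph{homotopic} to $\id_{\krgraphf}$, which is all that is needed to conclude $(\graphproj)_*\circ s_*=\id$ on $H_1$. That weaker statement is unproblematic: build your transversal arcs so that on each open edge they converge to \emph{some} point $y_{\Uman}\in K$ at the vertex end, then concatenate with a path inside $K$ from $y_{\Uman}$ to a chosen base point $x_K$. The resulting $s$ is continuous, and $\graphproj\circ s$ collapses a small interval at each vertex end of each edge to the vertex, which is visibly homotopic to $\id_{\krgraphf}$. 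So your approach is essentially repairable, but as written it overclaims; you should either restrict to generic $\func$ (one critical point per critical contour) or, as the paper does, relax ``section'' to ``homotopy right inverse.''
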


\subsection{Homotopy types of diffeomorphism groups of surfaces}
Suppose that $\Xman \subset \Mman$ is a (possibly empty) union of some boundary components of $\Mman$ and finitely many points in $\Int{\Mman}$.
Then the homotopy types of $\DiffIdFix{\Mman}{\Xman}$ are well known and computed in~\cite{Smale:ProcAMS:1959, EarleEells:JGD:1969, EarleSchatz:DG:1970, Gramain:ASENS:1973}.
They are summarized in Table~\ref{tbl:hom_types:DidMX}, where $\Mobius$ denotes M\"{o}bius band.
\begin{table}[htbp!]
\begin{tabular}{|c|p{8cm}|c|}\hline
& \centering $\Mman$, $\Xman$ &  Homotopy type of $\DiffIdFix{\Mman}{\Xman}$ \\ \hline
1 & $\chi(\Mman) < |\Xman|$, this holds e.g.\ when              &  \\
  & $\bullet$~$\chi(\Mman)<0$, or                               & contractible \\
  & $\bullet$~$\Xman$ contains a boundary component of $\Mman$  &  \\ \hline
2 & $\bullet$~$\Mman=D^2, \Circle\times[0;1], \Mobius, \KleinBottle$, and $\Xman=\varnothing$     & \\
  & $\bullet$~$\Mman=D^2, \RP{2}, \Sphere$, and $|\Xman|=1$                                       & $S^1$ \\
  & $\bullet$~$\Mman=\Sphere$, and $|\Xman|=2$                                                    & \\ \hline
3 & $\Mman=\Torus$, and $\Xman=\varnothing$             & $T^2$ \\ \hline
4 & $\Mman=\Sphere, \RP{2}$, and $\Xman=\varnothing$    & $SO(3) = \RP{3}$ \\ \hline
\end{tabular}
\caption{Homotopy types of $\DiffIdFix{\Mman}{\Xman}$}
\label{tbl:hom_types:DidMX}
\end{table}

\subsection{Homotopy types of stabilizers and orbits of functions from $\fclass{\Mman}$}
Till the end of this section we will consider a triple
\begin{equation}\label{equ:MfX}
    (\Mman,\func,\Xman),
\end{equation}
in which $\Mman$ is a compact (not necessarily connected) surface, $\func\in\fclass{\Mman}$, $\Xman$ is a union of finitely many (possibly none) regular contours of $\func$ and some critical points of $\func$.

The starting result concerning the structure of orbits is the following
\begin{subtheorem}[{\rm\cite{Sergeraert:ASENS:1972, Maksymenko:AGAG:2006}}]\label{th:diff_act}
Let $(\Mman,\func,\Xman)$ be a triple~\eqref{equ:MfX}.
Then following statements hold.
\begin{enumerate}[leftmargin=*, label={\rm(\arabic*)}]
\item\label{enum:th:diff_act:Of_Freshet_manif}
The orbit $\Orbit{\func,\Xman}$ has structure of Fr\'{e}chet submanifold of finite codimension of Fr\'{e}chet space $\Cid{\Mman}{\bR}$.
In particular, $\Orbit{\func,\Xman}$ has the homotopy types of a CW-complex.

\item\label{enum:th:diff_act:DO_fibr}
The action map $\actmap\colon \DiffFix{\Mman}{\Xman} \to \Orbit{\func,\Xman}$, $\actmap(\dif) = \func\circ\dif$, is a locally trivial principal $\Stabilizer{\func,\Xman}$-fibration.

\item\label{enum:th:diff_act:DidOf_fibr}
$\actmap(\DiffIdFix{\Mman}{\Xman})=\OrbitComp{\func}{\func,\Xman}$ and the restriction $\actmap\colon\DiffIdFix{\Mman}{\Xman}\to\OrbitComp{\func}{\func,\Xman}$ is still a locally trivial principal $\StabilizerIsotId{\func,\Xman}$-fibration, where $\StabilizerIsotId{\func,\Xman} := \Stabilizer{\func} \cap \DiffIdFix{\Mman}{\Xman}$.

\item\label{enum:th:diff_act:OffX__Off}
$\OrbitComp{\func}{\func,\Xman} = \OrbitComp{\func}{\func,\Xman\cup\partial\Mman} = \OrbitComp{\func}{\func,\Xman\setminus\partial\Mman}$.
In particular, $\OrbitComp{\func}{\func} = \OrbitComp{\func}{\func,\partial\Mman}$ for $\Xman=\varnothing$.
\end{enumerate}
\end{subtheorem}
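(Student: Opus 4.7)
The plan is to establish (1) and (2) jointly via a local section construction for the action map $\actmap$, then derive (3) by lifting paths in the resulting principal fibration, and finally prove (4) by a standard collar deformation near $\partial\Mman$. The heart of the argument is to build, on some neighborhood $U\subset\Orbit{\func,\Xman}$ of any given $\gfunc_0\in\Orbit{\func,\Xman}$, a continuous map $\sigma\colon U\to\DiffFix{\Mman}{\Xman}$ with $\func\circ\sigma(\gfunc)=\gfunc$ and $\sigma(\gfunc_0)=\id_{\Mman}$. Such a $\sigma$ both trivialises $\actmap$ over $U$ via $(\gfunc,\dif)\mapsto\sigma(\gfunc)\circ\dif$ and realises $\Orbit{\func,\Xman}$ as the image of a chart transverse to the $\Stabilizer{\func,\Xman}$-orbits in $\Cid{\Mman}{\bR}$.

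The construction of $\sigma$ splits $\Mman$ into a neighborhood of $\fSing$ and its complement. Away from critical points, I would fix a Riemannian metric in which $\partial\Mman$ is geodesic and which is flat near $\Xman$, and use the gradient-like flow $\Phi_t$ of $\grad\func/|\grad\func|^2$, which satisfies $\func\circ\Phi_t=\func+t$; then for $\gfunc$ close to $\func$ one sets $\sigma(\gfunc):=\Phi_{\gfunc-\func}$ on this region, which is continuous in the Whitney $\Cinfty$ topology. Near each critical point $z\in\fSing$, hypothesis~(H) gives a local chart in which $\func$ equals a homogeneous polynomial $q_z\colon\bR^2\to\bR$ without multiple factors; the corresponding Jacobi ideal generated by $q_z$ together with $\maxId\cdot(\partial q_z/\partial x,\partial q_z/\partial y)$ then has \emph{finite codimension} in the local ring at $z$, which is the decisive algebraic input. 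Using this, Sergeraert's version of the Nash--Moser implicit function theorem solves the local equation $q_z=\gfunc\circ\sigma_z$ by a tame map $\sigma_z$ into germs of diffeomorphisms. Gluing the regional pieces by a partition of unity subordinate to an $\func$-regular neighborhood of $\fSing$ produces the global $\sigma$. This simultaneously shows that $\actmap$ is a locally trivial principal $\Stabilizer{\func,\Xman}$-fibration and that $\Orbit{\func,\Xman}$ is a Fr\'echet submanifold of $\Cid{\Mman}{\bR}$ of codimension $\sum_{z\in\fSing}\codim(\text{Jacobi ideal at }z)$, which is finite because $\fSing$ is. Being a metrisable ANR, $\Orbit{\func,\Xman}$ then has the homotopy type of a CW complex by a theorem of Palais, completing~(1) and~(2).

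Statement (3) follows by covering homotopy. The inclusion $\actmap(\DiffIdFix{\Mman}{\Xman})\subset\OrbitComp{\func}{\func,\Xman}$ is automatic from continuity of $\actmap$ and path-connectedness of $\DiffIdFix{\Mman}{\Xman}$. Conversely, for $\gfunc\in\OrbitComp{\func}{\func,\Xman}$ choose a path $\gfunc_t$ from $\func$ to $\gfunc$ in $\OrbitComp{\func}{\func,\Xman}$ and use the local sections $\sigma$ successively to lift $\gfunc_t$ to a path $\dif_t\in\DiffFix{\Mman}{\Xman}$ with $\dif_0=\id_{\Mman}$; then $\dif_1\in\DiffIdFix{\Mman}{\Xman}$ and $\actmap(\dif_1)=\gfunc$. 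The restricted map $\actmap\colon\DiffIdFix{\Mman}{\Xman}\to\OrbitComp{\func}{\func,\Xman}$ inherits local triviality from the same $\sigma$, after shrinking $U$ so that each $\sigma(\gfunc)$ stays near $\id_{\Mman}$ and hence lies in $\DiffIdFix{\Mman}{\Xman}$; its fibre over $\func$ is $\Stabilizer{\func}\cap\DiffIdFix{\Mman}{\Xman}=\StabilizerIsotId{\func,\Xman}$.

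For~(4) it suffices to prove $\OrbitComp{\func}{\func,\Xman\setminus\partial\Mman}\subset\OrbitComp{\func}{\func,\Xman\cup\partial\Mman}$, since the other inclusions are obvious. Given $\dif\in\DiffIdFix{\Mman}{\Xman\setminus\partial\Mman}$ with $\func\circ\dif=\gfunc$, condition~(B) makes $\func$ regular on a collar of $\partial\Mman$, and after a change of variable $\func$ becomes a coordinate function on each collar component; the standard collar isotopy then deforms $\dif$, through diffeomorphisms in the same identity component and preserving $\func$, to a diffeomorphism equal to $\id_{\Mman}$ near $\partial\Mman$, showing that $\gfunc$ also lies in $\OrbitComp{\func}{\func,\Xman\cup\partial\Mman}$. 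The main obstacle throughout is the local section construction at critical points: for a general smooth singularity the Jacobi ideal may have \emph{infinite} codimension and $\Orbit{\func,\Xman}$ ceases to be a finite-codimension Fr\'echet submanifold, so hypothesis~(H) is used in an essential way.
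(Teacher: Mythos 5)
Your statements \ref{enum:th:diff_act:DidOf_fibr} and \ref{enum:th:diff_act:OffX__Off} are handled essentially as in the paper: \ref{enum:th:diff_act:DidOf_fibr} by path lifting for the fibration $\actmap$, and \ref{enum:th:diff_act:OffX__Off} by using condition (B) to extend the boundary isotopy of $\restr{\dif}{\partial\Mman}$ to an ambient, level-preserving isotopy supported in a collar and correcting $\dif$ by it. (For \ref{enum:th:diff_act:OffX__Off} note one point you and the paper both pass over quickly: to place $\gfunc$ in the \emph{path component} $\OrbitComp{\func}{\func,\Xman\cup\partial\Mman}$ you need the corrected diffeomorphism to lie in $\DiffIdFix{\Mman}{\Xman\cup\partial\Mman}$, i.e.\ to be isotopic to the identity rel $\Xman\cup\partial\Mman$, not merely to be fixed near $\partial\Mman$.) The real divergence is in \ref{enum:th:diff_act:Of_Freshet_manif} and \ref{enum:th:diff_act:DO_fibr}: the paper does not reprove these but invokes Sergeraert's Th\'eor\`eme~8.1.1 and its adaptation in Maksymenko's work, whereas you attempt to build the local section $\sigma$ directly, and this is where your argument has a genuine gap.

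The gap is the sentence ``gluing the regional pieces by a partition of unity \ldots produces the global $\sigma$.'' Diffeomorphism-valued sections cannot be glued by a partition of unity: a pointwise interpolation of two diffeomorphisms is in general not a diffeomorphism, and even if it were, the interpolated map would no longer satisfy the exact equation $\func\circ\sigma(\gfunc)=\gfunc$ on the overlap. Your flow-defined section $\Phi_{\gfunc-\func}$ on $\Mman\setminus$(neighborhood of $\fSing$) and the Nash--Moser germs $\sigma_z$ at the critical points have no reason to agree on the annuli where both are defined, and arranging this compatibility --- solving the equation globally, tamely in $\gfunc$, with the prescribed behaviour on $\Xman$ and $\partial\Mman$ --- is precisely the analytic content of the cited implicit function theorem, which is applied to the action map as a whole rather than assembled from local solutions. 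Until this step is replaced by an actual argument (or by the citation the paper uses), \ref{enum:th:diff_act:Of_Freshet_manif} and \ref{enum:th:diff_act:DO_fibr} are not proved; note also, as smaller points, that the formula $\sigma(\gfunc)=\Phi_{\gfunc-\func}$ requires the observation that $\gfunc\equiv\func$ on $\Xman$ and (for $\gfunc$ sufficiently close to $\func$) on $\partial\Mman$, together with a $C^1$-closeness estimate to ensure $\Phi_{\gfunc-\func}(\Mman)\subset\Mman$ near the boundary, and that local triviality of $\actmap$ at an arbitrary point of $\Orbit{\func,\Xman}$ should be reduced to the point $\func$ by equivariance.
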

\begin{proof}[Remarks to the proof]
\ref{enum:th:diff_act:Of_Freshet_manif} and~\ref{enum:th:diff_act:DO_fibr} follow from~\cite[Th\'{e}or\`{e}me~8.1.1]{Sergeraert:ASENS:1972}, which is proved for the so-called \term{functions $\func$ of finite codimension} on arbitrary closed manifolds $\Mman$ and for the \term{left-right} action of $\Diff(\bR)\times\Diff(\Mman)$ on $\Ci{\Mman}{\bR}$ defined by $(\phi,\dif)\cdot\func = \phi\circ\func\circ\dif$.
The arguments of that theorem were adapted in~\cite[Theorem~2.1]{Maksymenko:AGAG:2006} to the \term{right} action~\eqref{intro:equ:action} of $\Diff(\Mman)$ and the case when $\Xman$ is a (possibly empty) collection of critical points.
Moreover, as mentioned in~\cite[Theorem~4.2]{KuznietsovaMaksymenko:TMNA:2025} that proof also holds for the case when $\Xman$ contains connected components of $\partial\Mman$.
In particular, when $\Mman$ is a compact surface, every $\func\in\fclass{\Mman}$ has ``finite codimension'', see e.g.~\cite[Lemma~4.7]{KuznietsovaMaksymenko:TMNA:2025}, so~\ref{enum:th:diff_act:Of_Freshet_manif} and~\ref{enum:th:diff_act:DO_fibr} hold for $\func$, and then $\Orbit{\func}$ will be a Fr\'{e}chet submanifold of $\Cid{\Mman}{\bR}$ of finite codimension.

Even more, one can easily extend the latter arguments to the situation when $\Xman$ is an arbitrary finite collections of contours and critical points.
In that case $\Orbit{\func}$ will be a Fr\'{e}chet submanifold of finite codimension of the linear space space $\mathcal{C}_{\Xman}(\Mman,\bR)$ consisting of $\Cinfty$ functions taking constant values on connected components of $\Xman$.

\ref{enum:th:diff_act:DidOf_fibr} is a direct consequence of path lifting property for $\actmap$.

\ref{enum:th:diff_act:OffX__Off} follows from~\ref{enum:th:diff_act:DidOf_fibr} and is based on axiom (B), see~\cite[Corollary~2.1]{Maksymenko:UMZ:ENG:2012}.
Let us briefly recall the proof.
Note that the inclusion $\OrbitComp{\func}{\func,\Xman\cup\partial\Mman} \subset \OrbitComp{\func}{\func,\Xman}$ is evident, since $\DiffIdFix{\Mman}{\Xman\cup\partial\Mman} \subset \DiffIdFix{\Mman}{\Xman}$.
Conversely, let $\gfunc\in\OrbitComp{\func}{\func,\Xman}$, i.e.\ $\gfunc=\func\circ\dif$ for some $\dif\in\DiffIdFix{\Mman}{\Xman}$.
Since $\dif$ is isotopic to $\id_{\Mman}$, it leaves invariant all components of $\partial\Mman$ and preserve their orientations.
In particular, $\restr{\dif}{\partial\Mman}$ is fixed on $\Xman$ and is isotopic rel.\ $\Xman$ to $\id_{\partial\Mman}$ via some isotopy $\phi_{t}$ such that $\phi_0 = \id_{\partial\Mman}$ and $\phi_1=\restr{\dif}{\partial\Mman}$.
That isotopy then extends to an ambient isotopy $\phi_t\colon\Mman\to\Mman$ fixed out some thin neighborhood of $\partial\Mman$ and leaving invariant the level sets of $\func$.
In other words, $\phi_t\in\Stabilizer{\func,\Xman}$, and $\phi_1^{-1}\circ\dif\in\DiffIdFix{\Mman}{\Xman\cup\partial\Mman}$.
But then $\gfunc = \func\circ\dif = \func\circ(\phi_1^{-1}\circ\dif) \in \OrbitComp{\func}{\func,\Xman\cup\partial\Mman}$.

Finally, since $(\Xman\setminus\partial\Mman)\cup \partial\Mman = \Xman\cup \partial\Mman$, we obtain that $\OrbitComp{\func}{\func,\Xman} = \OrbitComp{\func}{\func,\Xman\cup\partial\Mman} = \OrbitComp{\func}{\func,\Xman\setminus\partial\Mman}$.
\end{proof}

The next Theorem~\ref{th:hom_type:stabilizers} describes the homotopy types of path components of the stabilizers:
\begin{subtheorem}[\cite{Maksymenko:AGAG:2006, Maksymenko:OsakaJM:2011, Maksymenko:UMZ:ENG:2012}]
\label{th:hom_type:stabilizers}
Let $(\Mman,\func,\Xman)$ be a triple~\eqref{equ:MfX} such that $\Mman$ is connected.
If every critical point of $\func$ is a \term{non-degenerate local extreme} and $\Xman \subset \Sigma_{\func}$, then both groups  $\StabilizerId{\func,\Xman} \subset\DiffIdFix{\Mman}{\Xman}$ are homotopy equivalent to the circle.
In all other cases, $\StabilizerId{\func,\Xman}$ is contractible.
\end{subtheorem}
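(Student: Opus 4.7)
The plan is to combine Maksymenko's shift-map construction with a case analysis driven by the local structure of $\func$ at its critical points.

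First, using Lemma~\ref{lm:regular-invariant-neigh-homotopy-eq}, I would replace $\Xman$ by an $\func$-regular neighborhood of $\Xman\cup\partial\Mman$, so that elements of $\StabilizerId{\func,\Xman}$ are fixed near every point of $\Xman$ and near $\partial\Mman$. I then choose a smooth vector field $V$ on $\Mman$ tangent to the level sets of $\func$ (so $V(\func)\equiv 0$), vanishing precisely on $\Sigma_{\func}\cup\partial\Mman$, and with complete flow $\mathbf{F}_t$; such $V$ is obtained by a local ``Hamiltonian'' construction in each chart followed by a partition-of-unity gluing. Consider the associated shift map $\Phi\colon\Cinfty(\Mman,\bR)\to\Stabilizer{\func}$, $\Phi(\afunc)(x)=\mathbf{F}_{\afunc(x)}(x)$, which is continuous. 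Restricted to the contractible subspace of functions vanishing in a neighborhood of $\Xman$, $\Phi$ takes values in $\StabilizerId{\func,\Xman}$, and the homotopy type of the target is controlled by the ``period sublattice'' $\ker\Phi$ and by whether $\Phi$ is a weak homotopy equivalence onto $\StabilizerId{\func,\Xman}$.

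In the extrema-only case (every critical point a non-degenerate local extremum and $\Xman\subset\Sigma_{\func}$), Morse-theoretic Euler-characteristic counting together with Lemma~\ref{lm:fgraph_on_K}'s picture of $\krgraphf$ restricts the connected surface $\Mman$ to the short list $D^2$, $\Sphere$, $\Circle\times[0;1]$. The Morse lemma at each extremum lets me normalize $V$ to be rotational with common period $2\pi$, so $\mathbf{F}_t$ is globally periodic and $\ker\Phi$ is generated by the single constant function $2\pi$. The quotient $\Cinfty(\Mman,\bR)/\ker\Phi$ is then homotopy equivalent to $\Circle$, whence $\StabilizerId{\func,\Xman}\simeq\Circle$; the parallel claim for $\DiffIdFix{\Mman}{\Xman}$ is read off from the relevant rows of Table~\ref{tbl:hom_types:DidMX}.

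In the complementary case, $\func$ has at least one critical point $z$ that is either a non-degenerate saddle or a degenerate critical point. Condition~(H) supplies a local normal form at $z$ as a nonzero homogeneous polynomial $g_z$ without multiple factors, and near such $z$ no orbit of $V$ is closed --- orbits approach $z$ along the distinct algebraic branches of $g_z^{-1}(0)$. Consequently $\ker\Phi=\{0\}$, and $\Phi$ becomes a weak homotopy equivalence from the contractible space $\Cinfty(\Mman,\bR)$ onto $\StabilizerId{\func,\Xman}$, making the latter contractible.

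The main obstacle is establishing that $\Phi$ is in fact a weak homotopy equivalence onto $\StabilizerId{\func,\Xman}$ rather than merely a continuous map with the expected kernel. This requires showing that every $\dif\in\StabilizerId{\func,\Xman}$ is $\func$-preservingly isotopic to a shift $\Phi(\afunc)$, which is done by decomposing $\dif$ along edges of the Kronrod--Reeb graph $\krgraphf$, patching local shifts across regular contours, and matching carefully at critical vertices using the normal form provided by~(H). These technical lemmas form the core of~\cite{Maksymenko:AGAG:2006, Maksymenko:OsakaJM:2011, Maksymenko:UMZ:ENG:2012}, which I would invoke as black boxes to finish the argument.
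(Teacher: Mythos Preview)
Your plan follows the shift-map technique that is exactly the content of the cited references~\cite{Maksymenko:AGAG:2006, Maksymenko:OsakaJM:2011, Maksymenko:UMZ:ENG:2012}; the paper itself gives no argument beyond pointing to those papers, so your sketch is in fact more detailed than what appears here and is faithful to the original approach.

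There is one genuine gap in your case split. Your ``complementary case'' assumes $\func$ possesses a critical point that is not a non-degenerate local extremum. But the negation of the hypothesis also covers the situation where \emph{every} critical point of $\func$ is a non-degenerate extremum while $\Xman\not\subset\Sigma_{\func}$, i.e.\ $\Xman$ contains a regular contour $\omega$. Your extrema-only paragraph explicitly assumes $\Xman\subset\Sigma_{\func}$, and your saddle/degenerate argument needs a non-extremal critical point, so neither paragraph applies. The repair stays inside your framework: since $V$ is nonvanishing along $\omega$, the shift map is restricted to functions $\afunc$ vanishing near $\omega$; any such $\afunc$ with $\Phi(\afunc)=\id$ must take values in the period lattice $2\pi\bZ$ on each orbit, and continuity together with $\afunc\equiv 0$ near $\omega$ forces $\afunc\equiv 0$ on the whole connected surface. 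Thus $\ker\Phi$ intersected with your restricted domain is trivial, and contractibility follows exactly as in your degenerate-point argument. You should insert this sub-case explicitly.
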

\begin{proof}[Remarks to the proof]
This theorem was initially proved for $\Xman=\varnothing$ in~\cite[Theorem~1.3]{Maksymenko:AGAG:2006} for Morse functions, in~\cite[Theorem~3.7]{Maksymenko:OsakaJM:2011} for all $\func\in\fclass{\Mman}$, in~\cite[Theorem~3]{Maksymenko:ProcIM:ENG:2010} for slightly larger classes of functions, and in~\cite[Theorem~2.1]{Maksymenko:UMZ:ENG:2012} it is extended to all variants of $\Xman$.
\end{proof}

Thus the case when $\StabilizerId{\func,\Xman}$ is homotopy equivalent to the circle covers exactly three types of Morse functions: on $\Sphere$ with only maximum and minimum, on $D^2$ with a single critical point, and on $\Circle\times[0;1]$ without critical points.

As a consequence of Theorems~\ref{th:diff_act} and~\ref{th:hom_type:stabilizers}, Table~\ref{tbl:hom_types:DidMX} and exact sequence of homotopy groups of fibration $\actmap\colon\DiffIdFix{\Mman}{\Xman}\to\OrbitComp{\func}{\func,\Xman}$, one easily deduces a lot of information about the homotopy types of the corresponding orbits.
For simplicity we will consider only the case when $\StabilizerId{\func,\Xman}$ is contractible.
\begin{subcorollary}[{\rm\cite[Theorem~1.5]{Maksymenko:AGAG:2006}}]
\label{cor:pik_Of}
Let $(\Mman,\func,\Xman)$ be a triple~\eqref{equ:MfX} such that $\Mman$ is connected and $\StabilizerId{\func,\Xman}$ is contractible.
Then for $k\geq2$
\begin{equation}\label{equ:pik_Of}
\pi_k\OrbitComp{\func}{\func,\Xman} =
\pi_k\DiffIdFix{\Mman}{\Xman} =
\begin{cases}
\pi_k SO(3), &  \text{if $(\Mman,\Xman) =(\Sphere,\varnothing) \ \text{or} \ (\RP{2},\varnothing)$}, \\
0, & \text{in all other cases},
\end{cases}
\end{equation}
and we also have the following short exact sequence:
\[
    1 \to
    \pi_1\DiffIdFix{\Mman}{\Xman} \xrightarrow{~\actmap~}
    \pi_1\OrbitComp{\func}{\func,\Xman} \xrightarrow{~\partial~}
    \pi_0\StabilizerIsotId{\func,\Xman} \to
    1
\]
In particular, if $\DiffIdFix{\Mman}{\Xman}$ is contractible as well, then we get an isomorphism
\begin{equation}\label{equ:partial_pi1Of__pi0Spr}
    \partial\colon\pi_1\OrbitComp{\func}{\func,\Xman} \cong \pi_0\StabilizerIsotId{\func,\Xman}.
\end{equation}
\end{subcorollary}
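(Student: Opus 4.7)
The plan is to feed the homotopy long exact sequence into the principal $\StabilizerIsotId{\func,\Xman}$-fibration
\[
    \actmap \colon \DiffIdFix{\Mman}{\Xman} \longrightarrow \OrbitComp{\func}{\func,\Xman}
\]
supplied by Theorem~\ref{th:diff_act}\ref{enum:th:diff_act:DidOf_fibr}. The two genuinely hard inputs — that $\actmap$ is a locally trivial fibration and that $\StabilizerId{\func,\Xman}$ is contractible — have already been absorbed from Theorems~\ref{th:diff_act} and~\ref{th:hom_type:stabilizers}, so the remainder should reduce to formal bookkeeping.

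First I would identify the homotopy groups of the fiber with those of $\StabilizerId{\func,\Xman}$. Since $\DiffIdFix{\Mman}{\Xman}\subset\DiffFix{\Mman}{\Xman}$, we have $\StabilizerIsotId{\func,\Xman}\subset\Stabilizer{\func,\Xman}$, and any path in $\StabilizerIsotId{\func,\Xman}$ starting at $\id_{\Mman}$ must land in the identity path component $\StabilizerId{\func,\Xman}$. Hence $\StabilizerIsotId{\func,\Xman}$ and $\StabilizerId{\func,\Xman}$ share the same identity path component, and the contractibility hypothesis yields $\pi_k\StabilizerIsotId{\func,\Xman}=0$ for every $k\geq 1$.

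Plugging this into the long exact sequence
\[
    \cdots \to \pi_k \StabilizerIsotId{\func,\Xman} \to \pi_k \DiffIdFix{\Mman}{\Xman} \xrightarrow{~\actmap_*~} \pi_k \OrbitComp{\func}{\func,\Xman} \xrightarrow{~\partial~} \pi_{k-1} \StabilizerIsotId{\func,\Xman} \to \cdots
\]
truncated on the right at $\pi_0\DiffIdFix{\Mman}{\Xman}=0$ (since $\DiffIdFix{\Mman}{\Xman}$ is path-connected by construction) has two effects. For $k\geq 2$ the two flanking fiber-groups vanish, forcing $\actmap_*$ to be an isomorphism $\pi_k\DiffIdFix{\Mman}{\Xman}\cong\pi_k\OrbitComp{\func}{\func,\Xman}$. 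For $k=1$, vanishing of $\pi_1\StabilizerIsotId{\func,\Xman}$ makes $\actmap_*$ injective, while exactness at the tail delivers the short exact sequence of the statement.

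To close the $k\geq 2$ formula I would read off the explicit values of $\pi_k\DiffIdFix{\Mman}{\Xman}$ from Table~\ref{tbl:hom_types:DidMX}: among the four possible homotopy types — contractible, $\Circle$, $\Torus$, and $SO(3)=\RP{3}$ — only $SO(3)$ carries nonzero homotopy in degree $\geq 2$, and it arises precisely when $(\Mman,\Xman)=(\Sphere,\varnothing)$ or $(\RP{2},\varnothing)$. The final isomorphism $\partial\colon\pi_1\OrbitComp{\func}{\func,\Xman}\cong\pi_0\StabilizerIsotId{\func,\Xman}$ under the extra hypothesis that $\DiffIdFix{\Mman}{\Xman}$ is contractible then drops straight out of the short exact sequence. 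I do not anticipate a real obstacle; the one delicate point is keeping straight that the fiber of $\actmap$ is $\StabilizerIsotId{\func,\Xman}$ rather than the full stabilizer $\Stabilizer{\func,\Xman}$, reflecting the restriction of the action to the identity component $\DiffIdFix{\Mman}{\Xman}$.
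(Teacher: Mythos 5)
Your proposal is correct and follows exactly the route the paper indicates: the long exact sequence of the principal $\StabilizerIsotId{\func,\Xman}$-fibration $\actmap$ from Theorem~\ref{th:diff_act}\ref{enum:th:diff_act:DidOf_fibr}, combined with the observation that the identity path component of $\StabilizerIsotId{\func,\Xman}$ is $\StabilizerId{\func,\Xman}$ (so all its higher homotopy vanishes by Theorem~\ref{th:hom_type:stabilizers}), and Table~\ref{tbl:hom_types:DidMX} to evaluate $\pi_k\DiffIdFix{\Mman}{\Xman}$. No gaps.
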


\begin{subremark}\rm
Regard $S^3$ as a unit sphere in $\bR^4$, and define the involution $\xi\colon S^3\to S^3$, $\xi(x)=-x$.
Then $SO(3)$ is homeomorphic to the quotient $S^3/\bZ_2$ being in turn a real projective space $\RP{3}$.
On the other hand, we also have a Hopf fibration $\psi\colon S^3\to S^2$ with fiber $S^1$.
Together this implies that $\pi_2 SO(3)=0$ and $\pi_k SO(3) = \pi_k S^3 = \pi_k S^2$ for $k\geq3$.
\end{subremark}

It will be convenient to formulate the following property in the form of conjecture:
\begin{subconjecture}\label{conj:g_act_on_tk}
As in Corollary~\ref{cor:pik_Of}, let $(\Mman,\func,\Xman)$ be a triple~\eqref{equ:MfX} such that $\Mman$ is connected and $\StabilizerId{\func,\Xman}$ is contractible.
Then there is $k\geq0$ and a free action of a certain finite group $G$ on $k$-torus $\torus{k}$ such that
\begin{equation}\label{equ:homtype_Of_TkG}
\OrbitComp{\func}{\func,\Xman} \simeq
\begin{cases}
(\torus{k}/G) \times SO(3), &  \text{if $(\Mman,\Xman) =(\Sphere,\varnothing) \ \text{or} \ (\RP{2},\varnothing)$}, \\
\torus{k}/G, & \text{in all other cases}.
\end{cases}
\end{equation}
\end{subconjecture}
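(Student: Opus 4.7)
The plan is to build a compact Lie group model $L$ for $\DiffIdFix{\Mman}{\Xman}$ and realize $G := \pi_0 \StabilizerIsotId{\func,\Xman}$ as a finite subgroup of $L$ acting freely by right translation, so that $L/G \simeq \OrbitComp{\func}{\func,\Xman}$. The starting point is the principal fibration from Theorem~\ref{th:diff_act}(3),
\[
    \StabilizerIsotId{\func,\Xman} \to \DiffIdFix{\Mman}{\Xman} \xrightarrow{~\actmap~} \OrbitComp{\func}{\func,\Xman}.
\]
Since the identity component $\StabilizerId{\func,\Xman}$ is contractible by hypothesis, the fiber $\StabilizerIsotId{\func,\Xman}$ is homotopy equivalent to the discrete group $G$, and the map $\actmap$ is homotopically a regular $G$-covering. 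It therefore suffices to produce a genuine compact Lie group $L$, homotopy equivalent to the total space, together with a free action of a finite subgroup of $L$ making the quotient a homotopy model for $\actmap$.

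Next I would construct $L$ case by case following Table~\ref{tbl:hom_types:DidMX}: the trivial group when $\chi(\Mman) < |\Xman|$; the circle $S^1$ for the disk, annulus, M\"obius band, or Klein bottle (with $\Xman$ a finite set of critical points, or one/two marked points on $\Sphere$/$\RP{2}$); the torus $T^2$ for the $2$-torus; and $SO(3)$ for $\Sphere$ and $\RP{2}$ with $\Xman=\varnothing$. In each case $L$ arises naturally as the isometry group of a canonical metric of constant curvature on $\Mman$ fixing $\Xman$, and the inclusion $L \hookrightarrow \DiffIdFix{\Mman}{\Xman}$ is a well-known homotopy equivalence. Setting $G_L := L \cap \StabilizerIsotId{\func,\Xman}$, the two essential points to verify are: (i) $G_L$ is finite, which follows because any positive-dimensional subgroup of $L$ lying in $\StabilizerIsotId{\func,\Xman}$ would have its identity component sitting inside the contractible group $\StabilizerId{\func,\Xman}$, forcing $\dim G_L = 0$ and so $G_L$ finite by compactness of $L$; (ii) the natural map $G_L \to G$ is an isomorphism, with injectivity coming from the connectedness of path components of $L$ and surjectivity requiring one to deform each $\func$-preserving isotopy class to an honest isometry of the chosen canonical metric. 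A comparison of the long exact sequences of the two fibrations $G_L \to L \to L/G_L$ and $G \to \DiffIdFix{\Mman}{\Xman} \to \OrbitComp{\func}{\func,\Xman}$ via the five lemma then delivers a homotopy equivalence $L/G_L \simeq \OrbitComp{\func}{\func,\Xman}$ with $G_L \cong G$.

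I expect the main obstacle to lie in the $\Sphere/\RP{2}$ case together with the assertion that the $SO(3)$ factor splits off as a direct factor. There $L = SO(3)$ typically contains no nontrivial $\func$-preserving subgroup, since a function with saddles on $\Sphere$ or $\RP{2}$ is not globally rotation-invariant, yet the conjecture predicts a torus $T^k$ of ``extra'' homotopy accompanying $SO(3)$. The remedy should be that the correct model is not $L$ itself but a product $L \times T^k$, where the torus factor arises as a genuine $\func$-preserving torus supported in $\func$-regular neighborhoods of the critical contours (cf.~Lemma~\ref{lm:regular-invariant-neigh-homotopy-eq} together with the graph structure of $\krgraphf$), with $G$ acting freely on this $T^k$ via combinatorial symmetries of the Kronrod--Reeb graph and trivially, up to homotopy, on the $SO(3)$ factor. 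Making this precise---producing the $T^k$ as a genuine smooth factor and verifying that $G$ acts trivially on $SO(3)$---is the essential technical content, and it is precisely what the computations cited in the introduction have so far accomplished only on a case-by-case basis.
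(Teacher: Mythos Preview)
There is a fundamental misidentification at the very first step: you set $G := \pi_0\StabilizerIsotId{\func,\Xman}$ and treat it as finite, but this group is almost never finite. By~\eqref{equ:partial_pi1Of__pi0Spr}, when $\DiffIdFix{\Mman}{\Xman}$ is contractible one has $\pi_0\StabilizerIsotId{\func,\Xman}\cong\pi_1\OrbitComp{\func}{\func,\Xman}$, and the sequence~\eqref{equ:exact_seq_pi1DidZn_Of_G} shows this contains a free abelian subgroup $\bZ^{n}$ of possibly large rank; the finite group $G$ in the conjecture is not $\pi_0\StabilizerIsotId{\func,\Xman}$ itself but its \emph{quotient} by that $\bZ^{n}$. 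Concretely, for $\Mman$ of genus $\geq 2$ your $L$ is trivial, so $L/G_L$ is a point, whereas $\OrbitComp{\func}{\func}$ is typically a nontrivial $\torus{k}/G$. The ``extra $\torus{k}$'' you notice only in the $\Sphere/\RP{2}$ case is in fact present for every surface and is the whole content of the statement; these twist-type symmetries are supported near regular contours and never sit inside the isometry group of a constant-curvature metric, so the surjectivity step $G_L\to G$ in your item~(ii) fails already on, say, the cylinder or any hyperbolic surface.

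The paper's route (see the ``Remarks to the proof'' following the conjecture) is entirely different and does not construct the torus action geometrically. One first shows that $\pi_1\OrbitComp{\func}{\func,\Xman}$ is a \emph{torsion-free crystallographic} group: crystallographic via~\eqref{equ:exact_seq_pi1DidZn_Of_G}, and torsion-free either from the finite homotopy dimension of the aspherical orbit together with Miller's theorem, or from the explicit algebraic description in Theorem~\ref{th:pi1Of_struct} and Lemma~\ref{lm:torsion_free_groups}. Then one invokes the converse to Bieberbach's theorem (Auslander--Kuranishi, Vasquez): every torsion-free crystallographic group is $\pi_1(\torus{k}/G)$ for some free $G$-action. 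Since $\OrbitComp{\func}{\func,\Xman}$ is aspherical with the homotopy type of a CW-complex, this pins down its homotopy type. The $SO(3)$ factor for $\Sphere$ and $\RP{2}$ is split off separately at the level of higher homotopy groups~\eqref{equ:pik_Of}. Note also that the paper states this as a \emph{conjecture}: it is verified only case by case in the cited literature, and remains open for non-Morse $\func$ on $\Sphere$ and $\RP{2}$.
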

This conjecture is verified for all $(\Mman,\func,\Xman)$ from \eqref{equ:MfX}, where either
\begin{enumerate}[label={\rm(\arabic*)}]
\item either $\Mman$ differs from $S^2$ and $\RP{2}$, or
\item $\Mman = S^2$ or $\RP{2}$ but $\func$ is Morse.
\end{enumerate}
Hence, it seems that it is also to be true for $\Mman=S^2$ and $\RP{2}$ and all $\func\in\fclass{\Mman}$.
\begin{proof}[Remarks to the proof]
1) In~\cite[Theorem~1.5]{Maksymenko:AGAG:2006} and \cite[Theorem~4]{Maksymenko:ProcIM:ENG:2010}, in addition to Corollary~\ref{cor:pik_Of}, it was also shown that for every $\func\in\fclass{\Mman}$ there is a short exact sequence
\begin{equation}\label{equ:exact_seq_pi1DidZn_Of_G}
    1 \to \pi_1\DiffId(\Mman) \oplus \bZ^{n} \to\pi_1\OrbitComp{\func}{\func} \to G \to 1
\end{equation}
for some $n\geq0$ and some finite group $G$.
That group $G$ is in fact the group of automorphisms of the ``enhanced'' Kronrod-Reeb graph of $\func$ induced by diffeomorphisms from $\StabilizerIsotId{\func}$, see \cite[Section~4]{Maksymenko:TA:2020} for details.
In the case when $G$ is trivial, \eqref{equ:pik_Of} and \eqref{equ:exact_seq_pi1DidZn_Of_G} imply~\eqref{equ:homtype_Of_TkG}, and thus $\OrbitComp{\func}{\func}$ is homotopy equivalent either to $\torus{n}$ or to $\torus{n}\times SO(3)$.
In particular that holds for \term{generic} Morse functions $\func$ (taking distinct values at distinct critical points) on all surfaces $\Mman$ and $\Xman=\varnothing$.
For such functions \cite[Theorem~1.5]{Maksymenko:AGAG:2006} gives a precise value of $n$.

2) Kudryavtseva studied the homotopy types of spaces of Morse functions on surfaces and, rediscovering arguments from~\cite{Maksymenko:AGAG:2006}, established~\eqref{equ:homtype_Of_TkG} for Morse functions $\func$ on orientable surfaces $\Mman$ with $\Xman\subset\Sigma_{\func}$, \cite{Kudryavtseva:SpecMF:VMU:2012},
\cite[Theorem~2.5]{Kudryavtseva:MathNotes:2012},
\cite[Theorem~2.6]{Kudryavtseva:MatSb:2013}, see also~\cite{Kudryavtseva:ENG:DAN2016, Kudryavtseva:sing:2021} for extensions to functions with other types of singularities.

3) Recall further that a group $B$ is called \term{crystallographic} if there is a short exact sequence $1\to \bZ^{k} \to B \to G \to 1$ for some $k\geq0$ and some finite group $G$.
Then, by the converse to Bieberbach theorem~\cite[Corollary~5.1]{Charlap:BiebGr:1986}, if $B$ is torsion free then there exists a free action of a group $G$ on a $k$-torus $\torus{k}$ such that $B \cong \pi_1(\torus{k}/G)$.
This is proved by Auslander and Kuranishi~\cite[Theorem~1]{AuslanderKuranishi:AM:1957} for the case when $\bZ^{k}$ corresponds to a maximal abelian subgroup of $B$ and extended to the general case by Vasquez~\cite[Theorem 3.1]{Vasquez:JDG:1970}.

Suppose $(\Mman,\Xman)$ differs from $(\Sphere,\varnothing)$ and $(\RP{2},\varnothing)$.
Then $\pi_1\DiffIdFix{\Mman}{\Xman}$ is a free abelian of rank $\leq 2$, whence~\eqref{equ:exact_seq_pi1DidZn_Of_G} implies that $\pi_1\OrbitComp{\func}{\func,\Xman}$ is crystallographic.
Moreover, in~\cite{Maksymenko:TrMath:2008} it is proved that for Morse functions $\func$ witn $n$ critical points, $\OrbitComp{\func}{\func,\Xman}=\OrbitComp{\func}{\func,\Xman\setminus\partial\Mman}$ has homotopy type of some $(2n-1)$-dimensional CW-complex.
Since $\OrbitComp{\func}{\func}$ is also aspherical, it follows from~\cite[Theorem~10.1]{Miller:AnnM:1984} that its fundamental group is torsion free.
Hence the existence of~\eqref{equ:homtype_Of_TkG} follows from the converse to Bieberbach theorem mentioned above.

Moreover, in the mentioned above series of papers, see Theorem~\ref{th:pi1Of_struct} below, it is computed an explicit structure of groups $\pi_1\OrbitComp{\func}{\func,\Xman}$ for all surfaces $\Mman$ expect for $\Sphere$, $\RP{2}$ and $\KleinBottle$ and $\func\in\fclass{\Mman}$.
In particular, see~\cite[Lemma~2.2]{Maksymenko:TA:2020}, it is reproved that $\pi_1\OrbitComp{\func}{\func,\Xman}$ is torsion free (actually for M\"{o}bius band this will be shown in Lemma~\ref{lm:torsion_free_groups} below, but the arguments are almost identical).

Also in~\cite[Corollary~6.2]{Maksymenko:EJM:2024} for all orientable $\Mman$ distinct from $\Sphere$ and all $\func\in\fclass{\Mman}$ it is construced an explicit actions of the corresponding group $G$ on a torus $\torus{k}$ yielding the homotopy equivalence~\eqref{equ:homtype_Of_TkG}.
\end{proof}

In next subsections we will describe the algebraic structure of $\pi_1\OrbitComp{\func}{\func,\Xman}$ for distinct cases of $\func$ and $\Xman$.

\subsection{Special kinds of wreath products}
\newcommand\aaa{a}
\newcommand\bbb{b}

Let $\Ggrp$ be a group with unit $e_{\Ggrp}$, and $\phi\colon \Ggrp \to \Ggrp$ be an automorphism.
Then we have the semidirect product $\Ggrp\rtimes_{\phi}\bZ$ being by definition the cartesian product $\Ggrp\times\bZ$ with respect to the following operation:
\[
    (\gel;\kel) (\hel;\lel) = (\gel \phi^{\kel}(\hel), \kel+\lel).
\]
The unit element in this group is $(e_{\Ggrp};0)$, and the inverse of $(\gel;\kel)$ is $(\phi^{-k}(\gel^{-1});-\kel)$.

More generally, if $\phi,\psi\colon \Ggrp \to \Ggrp$ are two commuting automorphisms, then we also have the semidirect product $\Ggrp\rtimes_{\phi,\psi}\bZ^2$ being by definition the cartesian product $\Ggrp\times\bZ^2$ with respect to the following operation:
\[
    (\gel;\kel,\lel) (\hel;\kel',\lel') =
    (\gel\, \phi^\kel(\psi^{\lel}(\hel)); \kel+\kel', \lel+\lel').
\]

Define now the following three types of groups.

1) Let $\aaa\geq1$, and $\phi\colon \Ggrp^{\aaa}\to \Ggrp^{\aaa}$, $\phi(\gel_0,\gel_1,\ldots,\gel_{\aaa-1}) = (\gel_1,\gel_2,\ldots,\gel_{\aaa-1},\gel_0)$, be the cyclic shift to the left.
Then the corresponding semidirect product $\Ggrp\rtimes_{\phi}\bZ$ will be denoted by $\Ggrp\wr_{\aaa}\bZ$.

2) Let $\aaa,\bbb\geq1$.
Consider the product $\Ggrp^{\aaa\bbb}$ of $\aaa\bbb$ copies of $\Ggrp$.
Its elements can be regarded as $\aaa\times \bbb$ matrices $\{\gel_{i,j}\}_{i\in\bZ_{\aaa}, j\in\bZ_{\bbb}}$ with entries in $\Ggrp$.

Then we have the following two automorphisms of $\Ggrp^{\aaa\bbb}$:
\begin{align*}
    &\phi,\psi\colon \Ggrp^{\aaa\bbb}\to \Ggrp^{\aaa\bbb}, &
    &\phi(\{ \gel_{i,j} \}) = \{ \gel_{i+1, j}\},&
    &\psi(\{ \gel_{i,j} \}) = \{ \gel_{i, j+1}\},
\end{align*}
making cyclic shifts of rows and columns respectively, where the first index is taken modulo $\aaa$ and the second one modulo $\bbb$.
Evidently, $\phi$ and $\psi$ commute and therefore we have a semidirect product $\Ggrp^{\aaa,\bbb}\rtimes_{\phi,\psi}\bZ^2$ which we will denote by $\Ggrp\wr_{\aaa,\bbb}\bZ^2$.

3) Finally, let $\Hgrp$ be another group with unit $e_{\Hgrp}$, $\gamma\colon \Hgrp \to \Hgrp$ be an automorphism of order $2$, and $\aaa\geq1$.
Consider the following automorphism $\phi\colon \Ggrp^{2\aaa}\times \Hgrp^{\aaa} \to \Ggrp^{2\aaa}\times \Hgrp^{\aaa}$,
\[
    \phi(\gel_0,\ldots,\gel_{2\aaa-1}; \ \hel_0,\ldots,\hel_{\aaa-1}) =
        \bigl(\gel_1,\gel_2,\ldots,\gel_{0}; \ \hel_1,\hel_2,\ldots,\hel_{\aaa-1},\gamma(\hel_0)\bigr).
\]
Evidently, it has order $2\aaa$.
The corresponding semidirect product $(\Ggrp^{2\aaa}\times \Hgrp^{\aaa}) \rtimes_{\phi}\bZ$ will be denoted by $(\Ggrp,\Hgrp)\wr_{\aaa,\gamma}\bZ$.

\begin{sublemma}\label{lm:torsion_free_groups}
If $\Ggrp$ is torsion free, then for every $\aaa,\bbb\geq1$ the groups $\Ggrp \wr_{\aaa} \bZ$ and $\Ggrp \wr_{\aaa,\bbb} \bZ^2$ are torsion free as well.

If $\Ggrp,\Hgrp$ are torsion free, then for every $\aaa$ and an automorphism $\gamma\colon \Hgrp \to \Hgrp$ of order $2$, the group $(\Ggrp,\Hgrp)\wr_{\aaa,\gamma}\bZ$ is torsion free as well.
\end{sublemma}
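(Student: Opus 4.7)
The plan is to derive all three assertions from a single elementary observation about group extensions: if
\[
    1 \to N \to E \xrightarrow{~\pi~} Q \to 1
\]
is a short exact sequence of groups in which both $N$ and $Q$ are torsion free, then $E$ is torsion free as well. Indeed, if $e \in E$ has finite order $n \geq 1$, then $\pi(e) \in Q$ has order dividing $n$, hence is trivial because $Q$ is torsion free; but then $e \in N$ has finite order in $N$, which forces $e$ to be the identity. This reduces each case to exhibiting the group in question as an extension of $\bZ$ or $\bZ^2$ by a torsion-free base group.

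This is immediate from the very definitions of the three wreath-like products. Each of them is by construction a semidirect product, and therefore fits into a short exact sequence
\begin{align*}
    1 &\to \Ggrp^{\aaa} \to \Ggrp\wr_{\aaa}\bZ \to \bZ \to 1, \\
    1 &\to \Ggrp^{\aaa\bbb} \to \Ggrp\wr_{\aaa,\bbb}\bZ^2 \to \bZ^2 \to 1, \\
    1 &\to \Ggrp^{2\aaa}\times\Hgrp^{\aaa} \to (\Ggrp,\Hgrp)\wr_{\aaa,\gamma}\bZ \to \bZ \to 1,
\end{align*}
in which the projection onto $\bZ$ (respectively $\bZ^2$) simply forgets the base-group coordinate. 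The quotients $\bZ$ and $\bZ^2$ are torsion free, and the kernels are finite direct products of copies of $\Ggrp$ (and, in the third case, also of $\Hgrp$), and hence torsion free under the hypotheses of the lemma. Applying the observation above yields the conclusion in each of the three cases.

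There is no substantive obstacle here: the content of the lemma lies entirely in identifying the semidirect-product structure, after which the torsion-free conclusion is automatic, and the explicit form of the twisting automorphisms $\phi, \psi$ plays no role. A reader preferring a self-contained argument may replace the extension lemma by a direct computation: for $(\gel; \kel) \in N \rtimes_{\phi} \bZ$ one checks by induction that
\[
    (\gel; \kel)^n = \bigl(\gel\cdot\phi^{\kel}(\gel)\cdots\phi^{(n-1)\kel}(\gel);\, n\kel\bigr),
\]
so the equation $(\gel; \kel)^n = (e_N; 0)$ with $n \geq 1$ forces $\kel = 0$ (as $\bZ$ is torsion free), after which it reduces to $\gel^n = e_N$ in the torsion-free base group $N$, forcing $\gel = e_N$. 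The $\bZ^2$-case and the twisted $(2\aaa)$-cyclic case are handled by exactly the same argument.
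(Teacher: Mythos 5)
Your proof is correct and follows essentially the same route as the paper: the paper also argues that the $\bZ$ (or $\bZ^2$) coordinate of a finite-order element must vanish, after which the element lies in the torsion-free base group $\Ggrp^{\aaa}$, $\Ggrp^{\aaa\bbb}$, or $\Ggrp^{2\aaa}\times\Hgrp^{\aaa}$ and is therefore trivial. Your packaging of this as the general fact that an extension of a torsion-free group by a torsion-free group is torsion free is a harmless (and slightly cleaner) reformulation of the same computation.
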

\begin{proof}
The proof for $\Ggrp \wr_{\aaa} \bZ$ is given in~\cite[Lemma~2.2]{Maksymenko:TA:2020}.
The proofs for $\Ggrp \wr_{\aaa,\bbb} \bZ^2$ and $(\Ggrp,\Hgrp)\wr_{\aaa,\gamma}\bZ$ is similar.
For the convenience of the reader let us present a proof for $(\Ggrp,\Hgrp)\wr_{\aaa,\gamma}\bZ$.

Suppose $q = (\gel_0,\ldots,\gel_{2m-1};\hel_0,\ldots,\hel_{\aaa-1};\kel)\in (\Ggrp,\Hgrp)\wr_{\aaa,\gamma}\bZ$ has finite order, say $\lel$.
Then
\[
   (e_{\Ggrp}, \ldots, e_{\Ggrp}; e_{\Hgrp},\ldots,e_{\Hgrp}; 0) =
   e =
   q^{\lel} = (\gel'_0,\ldots,\gel'_{2m-1};\hel'_0,\ldots,\hel'_{\aaa-1};\kel\lel)
\]
for some $\gel'_i\in\Ggrp$ and $\hel'_j\in\Hgrp$.
Hence $\kel\lel=0$, and therefore $\kel=0$ since $\lel\not=0$.
But then $\gel'_{i} = \gel_i^{\kel} = e_{\Ggrp}$ and $\hel'_{j} = \hel_j^{\kel} = e_{\Hgrp}$.
Since $\Ggrp$ and $\Hgrp$ are torsion free, $\gel_i=e_{\Ggrp}$ and $\hel_j=e_{\Hgrp}$, i.e. $q = e$.
\end{proof}

\subsection{Algebraic structure of fundamental groups of orbits}\label{subsect:algebraic-str-orbits}

\newcommand\PO[2][\empty]{\pi^{#1}_{#2}}

Let $\Yman\subset\Mman$ be an $\func$-adapted (not necessarily connected) subsurface, that is every boundary component of $\Yman$ is a regular contour of $\func$.
This implies that the restriction $\restr{\func}{\Yman}$ belongs to the class $\fclass{\Yman}$, and Theorems~\ref{th:diff_act} and \ref{th:hom_type:stabilizers} and Corollary~\ref{cor:pik_Of} are applicable for $\restr{\func}{\Yman}$.
It will be convenient to denote by
\[
    \PO[\func]{\Yman} := \pi_1\OrbitComp{\restr{\func}{\Yman}}{\restr{\func}{\Yman}} =
                  \pi_1\OrbitComp{\restr{\func}{\Yman}}{\restr{\func}{\Yman}, \partial\Yman}
\]
the fundamental group of the orbit of the restriction $\restr{\func}{\Yman}$ of $\func$ to $\Yman$, where the base point of $\PO[\func]{\Yman}$ is $\restr{\func}{\Yman}$.
In particular, $\PO[\func]{\Mman} := \pi_1\bigl(\OrbitComp{\func}{\func}, \func\bigr)$.
It is easy to see that if $\Yman = \mathop{\sqcup}\limits_{i=1}^{k}\Yman_i$ is a disjoint union of $\func$-adapted subsurfaces, then $\PO[\func]{\Yman} \cong \prod_{i=1}^{k} \PO[\func]{\Yman_i}$.

For simplicity, if $\func$ is fixed we will omit it from notation and denote $\PO[\func]{\Yman}$ simply by $\PO{\Yman}$.

\newcommand\sC[1][\empty]{\mathsf{C}_{#1}}
\newcommand\sD[1][\empty]{\mathsf{D}_{#1}}
\newcommand\sE[1][\empty]{\mathsf{E}_{#1}}
\newcommand\sM[1][\empty]{\mathsf{M}_{#1}}


\newcommand\lC[1][]{\mathsf{C}\ifx#1\relax\relax\else(#1)\fi}
\newcommand\lD[1][]{\mathsf{D}\ifx#1\relax\relax\else(#1)\fi}
\newcommand\lE[1][]{\mathsf{E}\ifx#1\relax\relax\else(#1)\fi}
\newcommand\lM[1][]{\mathsf{M}\ifx#1\relax\relax\else(#1)\fi}
\newcommand\CDEM[4]{%
    \expandafter\def\csname qqq \endcsname{\empty}
    \ifx#1\empty
        \relax
    \else
        \lC[#1]
        \expandafter\def\csname qqq \endcsname{.}
    \fi%
    \ifx#2\empty
        \relax
    \else
        \csname qqq \endcsname
        \lD[#2]
        \expandafter\def\csname qqq \endcsname{.}
    \fi%
    \ifx#3\empty
        \relax
    \else
        \csname qqq \endcsname
        \lE[#3]
        \expandafter\def\csname qqq \endcsname{.}
    \fi%
    \ifx#4\empty
        \relax
    \else
        \csname qqq \endcsname
        \lM[#4]
    \fi%
}

Say that $\Yman$ is \term{$\func$-proper}, if $\Mman\setminus\Yman$ contains some critical points of $\func$, i.e.\ \term{not all} critical points of $\func$ belong to $\Yman$.

Also, we will call $\Yman$ to be a \term{$\CDEM{c}{d}{e}{m}$-subsurface} for some $c,d,e,m\geq0$ whenever it is a disjoint union of
\begin{itemize}[nosep]
\item $c$ cylinders $\sC[1],\ldots,\sC[c]$,
\item $d$ disks $\sD[1],\ldots,\sD[d]$,
\item another collection of $e$ disks $\sE[1],\ldots,\sE[e]$,
\item and $m$ M\"{o}bius bands $\sM[1],\ldots,\sM[m]$.
\end{itemize}
In this case we also denote:
\begin{align*}
 \sC &= \mathop{\sqcup}\limits_{i=1}^{c} \sC[i],&
 \sD &= \mathop{\sqcup}\limits_{i=1}^{d} \sD[i],&
 \sE &= \mathop{\sqcup}\limits_{i=1}^{e} \sE[i],&
 \sM &= \mathop{\sqcup}\limits_{i=1}^{m} \sM[i].
\end{align*}
Then
\[
    \PO{\Yman} \cong \PO{\sC}\times\PO{\sD}\times\PO{\sE}\times\PO{\sM}
               \cong \prod\limits_{i=1}^{c} \PO{\sC[i]}\times
                     \prod\limits_{i=1}^{d} \PO{\sD[i]}\times
                     \prod\limits_{i=1}^{e} \PO{\sE[i]}\times
                     \prod\limits_{i=1}^{m} \PO{\sM[i]}.
\]
If some of numbers $c,d,e,m$ is zero, then we will omit the corresponding letter from notation.
For instance, a $\CDEM{\empty}{4}{\empty}{6}$-surface is a disjoint union of $4$ disks and $6$ M\"{o}bius bands, while $\CDEM{1}{\empty}{\empty}{\empty}$-subsurface is just a single cylinder.

Also, if some of the numbers $c,d,e,m$ are non-essential, then we omit them as well.
For instance, $\CDEM{}{}{\empty}{\empty}$-subsurface is a disjoint union of several cylinders and disks.

The following Theorem~\ref{th:pi1Of_struct} summarizes the information found in previous papers about the algebraic structure of $\PO{\Mman}$ for $\Mman$ distinct from $2$-sphere, projective plane and Klein bottle.
The main observation is that: (a)~$\PO{\Mman}$ can be explicitly expressed via $\PO{\Yman_0}$ for a certain $\CDEM{}{}{\empty}{\empty}$-subsurface of $\Mman$;
(b)~if $\Yman_0$ contains the minimal possible number of critical points, then one can directly compute $\PO{\Yman_0}$; (c)~otherwise, $\PO{\Yman_0}$ is expressed via $\PO{\Yman_1}$ for some \term{$\func$-proper} $\CDEM{}{}{\empty}{\empty}$-subsurface $\Yman_1\subset \Yman_0$, i.e.\ containing less number of critical points than $\Yman_0$, and the computation of $\PO{\Yman_1}$ reduces to the case (b) by induction.

\begin{subtheorem}\label{th:pi1Of_struct}
Let $\Mman$ be a compact surface, $\func\in\fclass{\Mman}$.
\begin{enumerate}[leftmargin=*, label={\rm(\arabic*)}]
\item\label{enum:p1Of:chi_neg}
It $\chi(\Mman)<0$, then $\PO{\Mman} \cong \PO{\Yman}$ for some $\CDEM{}{}{\empty}{}$-subsurface $\Yman$.

\item\label{enum:p1Of:torus}
Suppose $\Mman=\Torus$ is a $2$-torus.
\begin{enumerate}[leftmargin=*, label={\rm(\alph*)}]
\item\label{enum:p1Of:torus:tree}
If $\krgraphf$ is a tree, then $\PO{\Mman} \cong \PO{\Yman} \wr_{\aaa,\bbb}\bZ^2$ for some \term{$\func$-proper} $\CDEM{\empty}{}{\empty}{\empty}$-subsurface $\Yman$ and $\aaa,\bbb\geq1$.

\item\label{enum:p1Of:torus:cycle}
Otherwise, $\krgraphf$ has a unique cycle, and $\PO{\Mman} \cong \PO{\Yman} \wr_{\aaa}\bZ$ for some $\CDEM{1}{\empty}{\empty}{\empty}$-subsurface $\Yman$ (being thus a single cylinder), and $\aaa\geq1$.
\end{enumerate}

\item\label{enum:p1Of:mobius}
If $\Mman = \Mobius$ is a M\"{o}bius band, then there exist
\begin{enumerate}[leftmargin=*, label={\rm(\alph*)}]
\item\label{enum:p1Of:mobius:de}
either an \term{$\func$-proper} $\CDEM{1}{}{}{\empty}$-subsurface, $\aaa\geq1$, and an involution $\gamma\colon\PO{\sE}\to\PO{\sE}$ such that
\[\PO{\Mman} \cong \PO{\sC} \, \times \, \bigl((\PO{\sD},\PO{\sE}) \wr_{\aaa,\gamma} \bZ\bigr);\]

\item\label{enum:p1Of:mobius:d}
or an \term{$\func$-proper} $\CDEM{1}{}{\empty}{\empty}$-subsurface and \term{odd} $\bbb\geq1$ such that
$\PO{\Mman} \cong \PO{\sC} \times \bigl(\PO{\sD}\wr_{\bbb} \bZ\bigr)$.
\end{enumerate}

\item\label{enum:p1Of:disk}
Suppose $\Mman=D^2$ is a $2$-disk.
If $\func$ has only one critical point $\px$ (being therefore a local extreme), then $\PO{\Mman} = 0$ when $\px$ is non-degenerate, and $\PO{\Mman}\cong\bZ$ when $\px$ is degenerate.

Otherwise, there exists $\aaa\geq1$ and
\begin{enumerate}[leftmargin=*, label={\rm(\alph*)}]
\item\label{enum:p1Of:disk:de}
either an \term{$\func$-proper} $\CDEM{\empty}{}{1}{\empty}$-subsurface such that $\PO{\Mman} \cong \PO{\sE} \times \bigl(\PO{\sD}\wr_{\aaa} \bZ\bigr)$, or

\item\label{enum:p1Of:disk:d}
or an \term{$\func$-proper} $\CDEM{\empty}{}{\empty}{\empty}$-subsurface such that $\PO{\Mman} \cong \PO{\sD}\wr_{\aaa} \bZ$.
\end{enumerate}

\item\label{enum:p1Of:cylinder}
Finally, suppose that $\Mman=\Circle\times[0;1]$ is a cylinder.
If $\func$ has no critical points, then $\PO{\Mman} = 0$.
Otherwise, $\PO{\Mman} \cong \PO{\sC} \times \bigl(\PO{\sD}\wr_{\aaa} \bZ\bigr)$ for some \term{$\func$-proper} $\CDEM{1}{}{\empty}{\empty}$-subsurface and $\aaa\geq1$.
\end{enumerate}
\end{subtheorem}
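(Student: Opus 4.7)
The plan is to treat all five cases by a uniform cut-and-fiber scheme combined with the fibration $\actmap\colon\DiffIdFix{\Mman}{\Xman}\to\OrbitComp{\func}{\func,\Xman}$ of Theorem~\ref{th:diff_act}\ref{enum:th:diff_act:DidOf_fibr}. For each $\Mman$ I would choose a canonical $\func$-adapted submanifold $\Xman$ consisting of a finite collection of critical contours, and set $\Yman$ to be the complement, up to an $\func$-regular neighborhood allowed by Lemma~\ref{lm:regular-invariant-neigh-homotopy-eq}, of these contours; this $\Yman$ will be the advertised $\CDEM{c}{d}{e}{m}$-subsurface. Since $\StabilizerId{\func,\Xman}$ is contractible in all relevant situations by Theorem~\ref{th:hom_type:stabilizers}, Corollary~\ref{cor:pik_Of} gives a short exact sequence
\[
    1 \to \pi_1\DiffIdFix{\Mman}{\Xman} \to \PO{\Mman} \to \pi_0\StabilizerIsotId{\func,\Xman} \to 1,
\]
where $\pi_1\DiffIdFix{\Mman}{\Xman}$ is read off Table~\ref{tbl:hom_types:DidMX}. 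In turn, $\pi_0\StabilizerIsotId{\func,\Xman}$ fits into its own extension with kernel $\PO{\Yman}$ and quotient the finite group $G$ of Kronrod--Reeb graph automorphisms realised by $\StabilizerIsotId{\func}$.

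The wreath products arise from describing this total extension $\PO{\Yman}\to\PO{\Mman}\to G$ explicitly in terms of how the generators of $\pi_1\DiffIdFix{\Mman}{\Xman}$ (generalised Dehn twists) twist the permutation action of $G$ on the components of $\Yman$. Concretely, if $\Yman$ decomposes into $\aaa$ (or $\aaa\bbb$) isomorphic pieces cyclically permuted by such a twist, the resulting semidirect product is precisely $\PO{\Yman}\wr_{\aaa}\bZ$ (respectively $\wr_{\aaa,\bbb}\bZ^2$) as defined above. For the torus case~\ref{enum:p1Of:torus}, Lemma~\ref{lm:fgraph_on_K} dictates the dichotomy: if $\krgraphf$ contains a cycle, cutting along a single regular contour on that cycle leaves a single cylinder and matches one of the two generators of $\pi_1\DiffId(\Torus)\cong\bZ^2$ to the soul of the cycle, giving $\wr_{\aaa}\bZ$; if $\krgraphf$ is a tree, both generators survive and yield $\wr_{\aaa,\bbb}\bZ^2$. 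For the disk and cylinder, cases~\ref{enum:p1Of:disk}--\ref{enum:p1Of:cylinder}, I would proceed by induction on $|\fSing|$, handling the base cases directly (single extremum on $D^2$ giving $0$ or $\bZ$ according to degeneracy; no critical points on $\Circle\times\UInt$ giving $0$) and then peeling off an outermost critical contour; the ``extra'' component $\sE$ records a piece that becomes $G$-fixed after the cut. Case~\ref{enum:p1Of:chi_neg} is easiest: cutting along \emph{all} critical contours kills any permutation symmetry, so $\PO{\Mman}\cong\PO{\Yman}$ directly.

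The main obstacle is case~\ref{enum:p1Of:mobius}, the M\"obius band. The difficulty is the non-orientability: a diffeomorphism isotopic to the identity that traverses the soul circle of $\Mobius$ induces an orientation-reversing self-identification of the two ``sides'' of any two-sided regular contour, producing an involution $\gamma$ which swaps matched components. This is precisely what forces the twisted wreath product $(\PO{\sD},\PO{\sE})\wr_{\aaa,\gamma}\bZ$ of case~\ref{enum:p1Of:mobius:de}; the alternative case~\ref{enum:p1Of:mobius:d} occurs when $\gamma$ is trivial, but then compatibility with the non-orientable loop forces the number $\bbb$ of cyclically permuted copies to be \emph{odd}. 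Setting up $\gamma$ correctly as an involution of $\PO{\sE}$ and distinguishing the paired $\sD$-components from the self-matched $\sE$-components is the central technical step; torsion-freeness of the resulting groups is then granted by Lemma~\ref{lm:torsion_free_groups}. Throughout, the fixed cylinder factor $\sC$ encodes a collar of $\partial\Mobius$, made available by Theorem~\ref{th:diff_act}\ref{enum:th:diff_act:OffX__Off}.
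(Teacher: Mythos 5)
You should first note that the paper does not actually prove Theorem~\ref{th:pi1Of_struct}: it is a survey statement, and its ``Remarks to the proof'' consist of citations --- item~\ref{enum:p1Of:chi_neg} to \cite{Maksymenko:MFAT:2010}, item~\ref{enum:p1Of:torus} to \cite{MaksymenkoFeshchenko:UMJ:2014,Feshchenko:ProcIM:2015,MaksymenkoFeshchenko:MS:2015,MaksymenkoFeshchenko:MFAT:2015}, item~\ref{enum:p1Of:mobius} to \cite{KuznietsovaMaksymenko:TMNA:2025}, and items~\ref{enum:p1Of:disk}--\ref{enum:p1Of:cylinder} to \cite{Maksymenko:TA:2020} --- each of which is essentially a paper-length proof of one case. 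Your outline does reproduce the strategy those papers share (the fibration of Theorem~\ref{th:diff_act}, contractibility of stabilizers from Theorem~\ref{th:hom_type:stabilizers}, the boundary isomorphism of Corollary~\ref{cor:pik_Of}, reduction to $\func$-regular neighborhoods via Lemma~\ref{lm:regular-invariant-neigh-homotopy-eq}), but as a proof it has a genuine gap at the decisive step common to all cases: you assert that the resulting extension ``is precisely'' $G\wr_{a}\bZ$, $G\wr_{a,b}\bZ^{2}$ or $(G,H)\wr_{a,\gamma}\bZ$ once the pieces of $\Yman$ are cyclically permuted, but the construction of the section realizing the cyclic shift by a Dehn twist, the identification of the quotient with the group of enhanced Kronrod--Reeb graph automorphisms, the precise definition of the involution $\gamma$, and the parity argument forcing $b$ to be odd in the M\"obius case are exactly the content of the cited papers and are nowhere supplied; ``compatibility with the non-orientable loop forces $b$ odd'' is a slogan, not an argument.

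There are also concrete missteps. First, your $\Xman$ is a union of \emph{critical} contours, whereas the triple~\eqref{equ:MfX} underlying Theorem~\ref{th:diff_act} and Corollary~\ref{cor:pik_Of} only allows unions of regular contours and critical points; moreover, relating $\OrbitComp{\func}{\func,\Xman}$ to $\OrbitComp{\func}{\func}$ for an interior $\Xman$ is not given by Theorem~\ref{th:diff_act}\ref{enum:th:diff_act:OffX__Off}, which only adds or removes boundary components, and is itself a nontrivial part of the cited proofs. Second, for item~\ref{enum:p1Of:chi_neg} your recipe ``cut along all critical contours'' produces a subsurface $\Yman$ containing no critical points of $\func$, so the fundamental group of the orbit of $\restr{\func}{\Yman}$ would vanish and your argument would give $\pi_1\OrbitComp{\func}{\func}=0$ for every $\Mman$ with $\chi(\Mman)<0$; this is false in general, since by~\eqref{equ:exact_seq_pi1DidZn_Of_G} this group is a free abelian group of positive rank for suitable Morse functions on such surfaces. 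The subsurface of \cite[Theorem~1.7]{Maksymenko:MFAT:2010} is a union of cylinders, disks and M\"obius bands whose components do contain critical points. Third, the blanket claim that $\StabilizerId{\func,\Xman}$ is contractible ``in all relevant situations'' fails exactly in your base cases (a single nondegenerate extremum on $D^{2}$, a cylinder without critical points), where by Theorem~\ref{th:hom_type:stabilizers} it is homotopy equivalent to the circle; those cases require the exact sequence of the fibration with circle fiber rather than Corollary~\ref{cor:pik_Of}.
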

\begin{proof}[Remarks to the proof]
Statement~\ref{enum:p1Of:chi_neg} is proved in~\cite[Theorem~1.7]{Maksymenko:MFAT:2010}.
Statement~\ref{enum:p1Of:torus}\ref{enum:p1Of:torus:tree} is established in~\cite[Theorem~1]{MaksymenkoFeshchenko:UMJ:2014} for $\aaa=\bbb=1$ and in~\cite[Theorem~2.5]{Feshchenko:ProcIM:2015} for the general case, while~\ref{enum:p1Of:torus}\ref{enum:p1Of:torus:cycle} is shown in~\cite[Theorem~2]{MaksymenkoFeshchenko:MS:2015} for $\aaa=1$ and in~\cite[Theorem~1.6]{MaksymenkoFeshchenko:MFAT:2015} for $\aaa\geq1$.
Statement~\ref{enum:p1Of:mobius} proved in~\cite[Theorem~2.6]{KuznietsovaMaksymenko:TMNA:2025}, while~\ref{enum:p1Of:disk} and~\ref{enum:p1Of:cylinder} are shown in~\cite[Theorems~5.5, 5.6, 5.8]{Maksymenko:TA:2020}.
Note that the constructions of the subsurfaces $\Yman$, $\sC$, $\sD$, $\sE$ and $\sM$ in those papers are explicit.
\end{proof}

Let $\Bclass$ be the set of isomorphism classes of groups being minimal with respect to the following properties:
\begin{enumerate}[label={\rm(\alph*)}]
\item the unit group $\UnitGroup=\{1\}$ belongs to $\Bclass$;
\item if $G,H\in\Bclass$ and $n\geq1$, then $G\times H$ and $G \wr_{n}\bZ\in\Bclass$.
\end{enumerate}
Minimality of the class $\Bclass$ is equivalent to the assumption it consists of groups obtained from the unit group $\UnitGroup$ by finitely many operations of direct product and wreath products $\wr_{n}\bZ$.
For instance, $\Bclass$ contains the following groups: $\bZ = 1\wr_{1}\bZ$, $\bZ^n$ for all $n\geq1$, $(\bZ^{7}\wr_{3}\bZ)\wr_{11} \bZ$ etc.

Now Theorem~\ref{th:pi1Of_struct} implies that
\begin{enumerate}[leftmargin=*, label={\rm(\roman*)}]
\item\label{enum:classB:disk_et_all}
if either $\Mman = D^2$ or $\Mman=\Circle\times[0;1]$,
or $\Mman$ is orientable with $\chi(\Mman)<0$, then $\PO{\Mman} \in \Bclass$;
\item\label{enum:classB:torus}
if $\Mman = \Torus$, then there exists $\Hgrp\in\Bclass$ such that either $\PO{\Mman} \cong \Hgrp\wr_{\aaa}\bZ$ or $\Hgrp\wr_{\aaa,\bbb}\bZ^2$ for some $\aaa,\bbb\geq1$ and $\Hgrp\in\Bclass$, (in the first case $\PO{\Mman}\in\Bclass$ as well);
\item\label{enum:classB:mobius}
if $\Mman = \Mobius$ is either a M\"{o}bius band or $\Mman$ is non-orientable with $\chi(\Mman)<0$, then either $\PO{\Mman} \in \Bclass$ or $\PO{\Mman} \cong \Agrp \times \bigl(\prod_{i=1}^{g}(\Ggrp_i,\Hgrp_i)\wr_{\aaa_i,\gamma_i} \bZ\bigr)$ for some $\Agrp,\Ggrp_i,\Hgrp_i\in\Bclass$, $\aaa_i\geq1$, and involutions $\gamma_i\colon\Hgrp_i\to\Hgrp_i$, where $g$ does not exceed the \term{non-orientable genus} of $\Mman$ (i.e.\ the maximal number of mutually disjoint M\"{o}bius bands inside $\Mman$).
\end{enumerate}
Conversely, see~\cite[Theorem~1.2]{KuznietsovaSoroka:UMJ:2021} and \cite[Theorem~8.3.3]{Maksymenko:ProcIMNANU:2020}, for every surface $\Mman$ from the cases~\ref{enum:classB:disk_et_all} and~\ref{enum:classB:torus} and for every group $\Ggrp\in\Bclass$ and $\aaa,\bbb\geq1$ there exist $\func\in\fclass{\Mman}$ (which can even assumed to be Morse) such that $\PO[\func]{\Mman}\cong\Ggrp$.
Moreover, for any $\Hgrp\in\Bclass$ and $\aaa,\bbb\geq1$, there exist $\func\in\fclass{\Torus}$ (which can also assumed to be Morse) such that $\PO[\func]{\Mman}\cong\Hgrp\wr_{\aaa,\bbb}\bZ^2$.

As a consequence of Theorem~\ref{th:pi1Of_struct} and our main result Theorem~\ref{intro:th:acylcic_mobius_orbit} we get
\begin{corollary}\label{cor:K_pi1Of_case_a}
Let $\func\in\fclass{\KleinBottle}$ be a function satisfying the statement~\ref{intro:klein_topmebius} of Lemma~\ref{intro:lm:top_decomposition}.
Then $\OrbitComp{\func}{\func}$ is a $K(N,1)$-space, where $N := \pi_1\OrbitComp{\func}{\func} \cong \Agrp\times \Bgrp_1 \times \Bgrp_2$, $\Agrp\in\Bclass$, and each $\Bgrp_i$ is of one of the forms:
\begin{itemize}
   \item $\Ggrp_i\wr_{\bbb_i}\bZ$ for some odd $\bbb_i$ and $\Ggrp_i\in\Bclass$;
   \item $(\Ggrp_i,\Hgrp_i)\wr_{\aaa_i,\gamma_i}\bZ$ for some $\Ggrp_i,\Hgrp_i\in\Bclass$, $\aaa_i\geq1$, and an involution $\gamma_i\colon\Hgrp_i\to\Hgrp_i$.
   \qedhere
\end{itemize}
\end{corollary}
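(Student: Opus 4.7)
The plan is to reduce the corollary to Theorem~\ref{intro:th:acylcic_mobius_orbit} combined with the algebraic description of orbit fundamental groups on a single M\"obius band summarised in Theorem~\ref{th:pi1Of_struct}\ref{enum:p1Of:mobius}. First I would invoke Theorem~\ref{intro:th:acylcic_mobius_orbit}, which supplies a homotopy equivalence
\[
    \OrbitComp{\func}{\func} \;\simeq\; \OrbitComp{\func_1}{\func_1}\times \OrbitComp{\func_2}{\func_2}.
\]
This immediately yields the isomorphism $N:=\pi_1\OrbitComp{\func}{\func}\cong \pi_1\OrbitComp{\func_1}{\func_1}\times \pi_1\OrbitComp{\func_2}{\func_2}$.

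To verify the asphericity claim I would apply Corollary~\ref{cor:pik_Of} to each restriction $\func_i\in\fclass{\kMob{i}}$. Since $\alpha$ is a regular contour of $\func$, each $\func_i$ is regular on $\partial\kMob{i}=\alpha$, and since the M\"obius band is not among the three surfaces listed immediately after Theorem~\ref{th:hom_type:stabilizers}, the stabiliser component $\StabilizerId{\func_i,\partial\kMob{i}}$ is contractible; moreover $\DiffIdFix{\kMob{i}}{\partial\kMob{i}}$ is contractible by Table~\ref{tbl:hom_types:DidMX}. As $\kMob{i}$ is neither $\Sphere$ nor $\RP{2}$, Corollary~\ref{cor:pik_Of} then gives $\pi_k\OrbitComp{\func_i}{\func_i}=0$ for all $k\geq 2$, so each $\OrbitComp{\func_i}{\func_i}$, and hence their product, is a $K(\pi,1)$-space. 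Consequently $\OrbitComp{\func}{\func}$ is a $K(N,1)$.

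It remains to describe the algebraic structure of $N$. Applying Theorem~\ref{th:pi1Of_struct}\ref{enum:p1Of:mobius} to each $\func_i$ yields a decomposition
\[
    \pi_1\OrbitComp{\func_i}{\func_i} \;\cong\; \Agrp_i\times \Bgrp_i,
\]
where $\Agrp_i$ is the fundamental group of the orbit of $\func_i$ restricted to the cylinder summand of the $\func_i$-adapted subsurface furnished by the theorem, and $\Bgrp_i$ is of one of the two forms claimed in the corollary (either $\Ggrp_i\wr_{\bbb_i}\bZ$ with odd $\bbb_i$, or $(\Ggrp_i,\Hgrp_i)\wr_{\aaa_i,\gamma_i}\bZ$). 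Each $\Agrp_i$ lies in $\Bclass$ by Theorem~\ref{th:pi1Of_struct}\ref{enum:p1Of:cylinder} applied componentwise (since $\Bclass$ is closed under direct products), and the constituents $\Ggrp_i,\Hgrp_i$ also lie in $\Bclass$ by the same theorem. Setting $\Agrp:=\Agrp_1\times \Agrp_2\in\Bclass$ then yields the required decomposition $N\cong \Agrp\times \Bgrp_1\times \Bgrp_2$. No serious obstacle is expected here: once the homotopy equivalence of Theorem~\ref{intro:th:acylcic_mobius_orbit} is in hand, the remainder of the argument is bookkeeping together with verification of the asphericity hypothesis, both of which reduce to the previously cited results.
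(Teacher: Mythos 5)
Your proposal is correct and follows exactly the route the paper intends for this corollary: Theorem~\ref{intro:th:acylcic_mobius_orbit} for the product decomposition, Corollary~\ref{cor:pik_Of} for asphericity of the factors, and Theorem~\ref{th:pi1Of_struct}\ref{enum:p1Of:mobius} (with the cylinder/disk cases feeding into $\Bclass$) for the algebraic form of each $\pi_1\OrbitComp{\func_i}{\func_i}$. The paper states the corollary as an immediate consequence of these same results without further argument, so your bookkeeping fills in precisely the intended details.
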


\section{Preliminaries}\label{sect:preliminaries}

\subsection{Diffeomorphism groups}
All manifolds are assumed to be of class $\Cinfty$.
Let $\Mman$ be a manifold.
Denote by $\Diff(\Mman)$ the group of all $\Cinfty$ diffeomorphisms of $\Mman$, and for a closed subset $\Xman\subset\Mman$ define also the following subgroups of $\Diff(\Mman)$.
Let
\begin{itemize}[leftmargin=*]
\item $\DiffInv{\Mman}{\Xman}$ be the group of diffeomorphisms $\dif$ of $\Mman$ leaving $\Xman$ invariant, i.e.\ $\dif(\Xman)=\Xman$;
\item $\DiffFix{\Mman}{\Xman}$ be its subgroup fixing $\Xman$ pointwise;
\item $\DiffNb{\Mman}{\Xman}$ be another subgroup consisting of diffeomorphisms $\dif$ each being fixed on some neighborhood of $\Xman$ (depending on $\dif)$.
\end{itemize}
Thus, we have the following inclusions:
\begin{align*}
    \DiffNb{\Mman}{\Xman}         \ \subset \
    \DiffFix{\Mman}{\Xman}        \ \subset \
    \DiffInv{\Mman}{\Xman}        \ \subset \
    \Diff(\Mman).
\end{align*}

A $\Cinfty$ map $\gamma\colon\Nman\to\Mman$ between two manifolds is an \term{embedding} if the induced tangent map $T\gamma\colon T\Nman\to T\Mman$ is injective.
Then by a \term{submanifold} of $\Mman$ we will mean the image of some embedding.

Denote by $\DiffPlFix{\Mman}{\Xman}$ the subgroup of $\DiffFix{\Mman}{\Xman}$ consisting of diffeomorphisms $\dif$ which also ``preserve the sides near $\Xman$'', i.e.\ if $\Yman$ is a connected component of $\Xman$, $\Yman\subset\Uman\subset\Eman_{\Yman}$ where $\Uman$ is any open neighborhood of $\Yman$ and $\Eman_\Yman$ is some tubular neighborhood of $\Yman$, $\Eman_{\Yman}^{0}$ and $\Eman_{\Yman}^{1}$ are connected components of $\Eman_{\Yman}\setminus\Yman$, with $\dif(\Uman)\subset\Eman_{\Yman}$, then $\dif(\Uman\cap\Eman_{\Yman}^{i}) \subset \Eman_{\Yman}^{i}$ for $i=0,1$.

We will need the following theorem, which seems to be known however the authors did not find a formal proof in the literature:
\begin{subtheorem}\label{th:DnbMX__DplFMX_he}
Let $\Xman$ be a compact submanifold of $\Mman$ such that every its connected component~$\Yman$ is
\begin{enumerate}[label={\rm(\arabic*)}]
\item\label{enum:X:bd}
either a boundary component of $\Mman$,
\item\label{enum:X:proper}
or a \term{proper two-sided of codimension $1$}, that is
\begin{enumerate}[label={\rm(\alph*)}]
    \item $\partial\Yman = \Yman \cap \partial\Mman$ and that intersection is transversal;
    \item any tubular neighborhood $\Eman_{\Yman}\to\Yman$ of $\Yman$ is a trivial one-dimensional vector bundle.
\end{enumerate}
\end{enumerate}

Then the inclusion $\DiffNb{\Mman}{\Xman} \subset \DiffPlFix{\Mman}{\Xman}$ is a homotopy equivalence.
\end{subtheorem}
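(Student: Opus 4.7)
The plan is to construct a strong deformation retraction $H\colon \DiffPlFix{\Mman}{\Xman}\times[0,1] \to \DiffPlFix{\Mman}{\Xman}$ with $H_0=\id$, $H_s|_{\DiffNb{\Mman}{\Xman}}=\id$ for all $s$, and $H_1(\DiffPlFix{\Mman}{\Xman})\subset\DiffNb{\Mman}{\Xman}$. Such a retraction exhibits $\DiffNb{\Mman}{\Xman}$ as a strong deformation retract of $\DiffPlFix{\Mman}{\Xman}$, which yields in particular the asserted homotopy equivalence of the inclusion. The two geometric ingredients are supplied by the hypotheses: each component $\Yman$ of $\Xman$ admits a trivial $1$-dimensional tubular neighborhood, and the side-preserving condition in the definition of $\DiffPlFix{\Mman}{\Xman}$ is exactly a positivity statement on the normal derivative at $\Yman$.

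First I would fix once and for all mutually disjoint smooth tubular neighborhood embeddings $\tau_\Yman\colon\Yman\times I_\Yman\hookrightarrow\Mman$ for each component $\Yman$, with $I_\Yman=(-1,1)$ for interior components and $I_\Yman=[0,1)$ for boundary components of $\Mman$, and $\tau_\Yman(y,0)=y$. Given $\dif\in\DiffPlFix{\Mman}{\Xman}$, the conjugate $\tau_\Yman^{-1}\circ\dif\circ\tau_\Yman$ is defined on a neighborhood of $\Yman\times\{0\}$ and has the form $(y,t)\mapsto(A_\dif(y,t),B_\dif(y,t))$ with $A_\dif(y,0)=y$ and $B_\dif(y,0)=0$. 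The side-preserving hypothesis is exactly $\partial_tB_\dif(y,0)>0$; equivalently, $\dif\circ\tau_\Yman$ is again a tubular neighborhood embedding of $\Yman$ inducing the same normal orientation as $\tau_\Yman$.

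The deformation would then be built in two steps. Invoking the parametric uniqueness of tubular neighborhoods, concretely by integrating a suitable time-dependent vector field tangent to the fibres of $\tau_\Yman$ that interpolates between $\dif\circ\tau_\Yman$ and $\tau_\Yman$, I would produce a smooth family of tubular neighborhood embeddings $\tau^\dif_s$ with $\tau^\dif_0=\dif\circ\tau_\Yman$ and $\tau^\dif_1=\tau_\Yman$, depending continuously on $\dif\in\DiffPlFix{\Mman}{\Xman}$ and constant in $s$ whenever $\dif\in\DiffNb{\Mman}{\Xman}$. Damping this isotopy by a cutoff function in the $t$-direction and extending by $\dif$ outside the tubular neighborhoods yields a smooth family $\dif_s\in\DiffPlFix{\Mman}{\Xman}$ with $\dif_0=\dif$ and $\dif_1$ equal to the identity on $\tau_\Yman(\Yman\times(-1/2,1/2))$. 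Setting $H(\dif,s):=\dif_s$ gives the desired strong deformation retraction.

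I expect the main obstacle to be the careful verification of continuity of $H$ in the Whitney $\Cinfty$-topology together with the verification that every intermediate $\dif_s$ is a bona fide $\Cinfty$-diffeomorphism of $\Mman$, not merely a smooth map. Continuity demands that the isotopy $\tau^\dif_s$ be defined by a canonical formula depending continuously on the $\Cinfty$-jets of $\dif$, which is the point of using the Moser-style interpolating vector field rather than an ad hoc choice; the diffeomorphism property hinges on the positivity $\partial_tB_\dif(y,0)>0$ supplied by side-preservation, which guarantees that the cutoff interpolations do not develop a vanishing Jacobian. Once these technical verifications are in place, the relations $H_0=\id$ and $H_1(\DiffPlFix{\Mman}{\Xman})\subset\DiffNb{\Mman}{\Xman}$ are built into the construction, and $H_s|_{\DiffNb{\Mman}{\Xman}}=\id$ holds because for such $\dif$ the embeddings $\dif\circ\tau_\Yman$ and $\tau_\Yman$ already coincide near $\Yman$, so the interpolating isotopy is constant.
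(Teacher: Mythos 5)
Your route is genuinely different from the one the paper presents, but it is precisely the alternative the paper itself gestures at in one sentence before turning to its formal proof: ambient isotopy extension combined with contractibility of the space of tubular neighborhoods (the paper points to \cite[Chapter~A, Prop.~31]{Godin:2008} for this). The paper's formal proof instead uses a ``linearization theorem'' to reduce first to diffeomorphisms that near $\Xman$ coincide with a vector bundle automorphism of a fixed tubular neighborhood; the assignment $\dif\mapsto\tfib{\dif}$ is then a fibration with fiber $\DiffNb{\Mman}{\Xman}$, and the base, after invoking triviality of the normal line bundle and side-preservation, becomes the contractible space $\Ci{\Xman}{(0;+\infty)}$, so the fiber inclusion is a homotopy equivalence. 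Both routes hinge on the same two geometric inputs you correctly isolate (triviality of the normal bundle and positivity of the normal derivative), so the comparison is chiefly one of packaging: the fibration argument avoids producing an explicit deformation and therefore sidesteps the Whitney-topology continuity verification that you rightly flag as the hard part of your approach.

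One claim in your write-up is, however, false as stated: $H_s|_{\DiffNb{\Mman}{\Xman}}=\id$ cannot hold. If it did, $H_1$ would act as the identity on $\DiffNb{\Mman}{\Xman}$ while by construction $H_1\bigl(\DiffPlFix{\Mman}{\Xman}\bigr)\subset\DiffFix{\Mman}{\Uman}$ for a \emph{fixed} neighborhood $\Uman$ such as $\tau_\Yman(\Yman\times[-1/2,1/2])$; this would force $\DiffNb{\Mman}{\Xman}\subset\DiffFix{\Mman}{\Uman}$, which is wrong, since an element of $\DiffNb{\Mman}{\Xman}$ need only be fixed on a $\dif$-dependent, possibly very small, neighborhood of $\Xman$. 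The Moser interpolation is constant only where $\dif\circ\tau_\Yman$ and $\tau_\Yman$ already agree, and for such a $\dif$ that set can be far smaller than $\Uman$. The repair is standard and preserves the spirit of your argument: the deformation \emph{does} carry $\DiffNb{\Mman}{\Xman}$ into itself (the interpolating field vanishes wherever $\dif$ already coincides with the identity, so $\dif_s$ remains fixed on a smaller neighborhood of $\Xman$), and it deformation-retracts both $\DiffNb{\Mman}{\Xman}$ and $\DiffPlFix{\Mman}{\Xman}$ onto the common subgroup $\DiffFix{\Mman}{\Uman}$; the inclusion in the theorem is then a homotopy equivalence by two-out-of-three. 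With that rephrasing the strategy is sound, modulo the continuity and diffeomorphism checks you acknowledge as the technical crux.
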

This statement might be deduced from ambient isotopy extension theorem for smooth submanifolds and contractibility of the space of tubular neighborhoods of a submanifold, see \cite[Chapter A, Proposition 31]{Godin:2008}.
A formal proof can be obtained as well from ``linearization theorem'' proved in the preprint~\cite{KhokhliukMaksymenko:2022}, see also discussions in~\cite[Theorem~2.2.5]{Maksymenko:DGA:2024}.
For the convenience of the reader let us present the arguments.
\begin{proof}
For simplicity assume that $\Xman$ is a closed submanifold of $\Int{\Mman}$: for boundary components one should consider collars (``halves'' of tubular neighborhoods).
Let $\pr\colon\Eman\to\Xman$ be any tubular neighborhood of $\Xman$ and $\VBAut[\Xman]{\Eman}$ be the group of $\Cinfty$ vector bundle automorphisms of $\Eman$ fixed on $\Xman$.
Denote by $\DiffFix{\Mman}{\Xman,\pr}$ the subgroup of $\DiffFix{\Mman}{\Xman}$ consisting of diffeomorphisms $\dif$ which coincide near $\Xman$ with some vector bundle automorphism of $\Eman$, which in fact is uniquely determined by the following formula:
\[
    \tfib{\dif}\colon\Eman\to\Eman,
    \qquad
    \tfib{\dif}(x) = \lim_{t\to 0} \tfrac{1}{t} \dif(t x).
\]
Here the multiplication by $t$ and $1/t$ should be understood in terms of the vector bundle structure on $\Eman$, see e.g.~\cite[Eq~(2.1)]{Maksymenko:DGA:2024}.

Let $\DiffFix{\Mman}{\Xman,\pr}$ be the subgroup of $\DiffFix{\Mman}{\Xman}$ consisting of diffeomorphisms $\dif$ which coincide near $\Xman$ with $\tfib{\dif}$.
Then under assumption~\ref{enum:X:proper} on $\Xman$, the ``linearization theorem'' from~\cite{KhokhliukMaksymenko:2022} claims that the inclusion $\DiffFix{\Mman}{\Xman,\pr} \subset  \DiffFix{\Mman}{\Xman}$ is a homotopy equivalence.

Denote $\DiffPlFix{\Mman}{\Xman,\pr} = \DiffPlFix{\Mman}{\Xman}\cap\DiffFix{\Mman}{\Xman,\pr}$,
Since $\DiffPlFix{\Mman}{\Xman}$ is a union of path components of $\DiffFix{\Mman}{\Xman}$, the inclusion
\begin{equation}\label{equ:DplFMXp__D+FMX}
    \DiffPlFix{\Mman}{\Xman,\pr} \subset \DiffPlFix{\Mman}{\Xman}
\end{equation}
is still a homotopy equivalence.

Furthermore, we have a homomorphism $\tfib{}\colon\DiffFix{\Mman}{\Xman,\pr}\to\VBAut[\Xman]{\Eman}$, $\dif\mapsto\tfib{\dif}$, whose kernel is $\DiffNb{\Mman}{\Xman}$.
It is shown in~\cite{KhokhliukMaksymenko:2022}, see also~\cite[Corollary~7.1.3]{Maksymenko:DGA:2024} for another proof, that $\tfib{}$ admits local cross sections, and therefore it is a locally trivial fibration \term{over its image} which is in turn a union of path components of $\VBAut[\Xman]{\Eman}$.

Now let us use the assumption that $\Xman$ is ``one-sided''.
Note that every vector bundle automorphism $\gdif$ of the trivial vector bundle $\Xman\times\bR\to\Xman$ is given by $\gdif(x,v) = (x, \alpha(x)v)$, for some everywhere non-zero $\Cinfty$ function $\alpha\colon\Xman\to\bR\setminus0$.
Hence $\VBAut[\Xman]{\Eman}$ can be identified with $\Ci{\Xman}{\bR\setminus0}$.

Moreover, then $\tfib{}(\DiffPlFix{\Mman}{\Xman,\pr}) \subset \Ci{\Xman}{(0;+\infty)}$.
Evidently, $\Ci{\Xman}{(0;+\infty)}$ is a path component of $\Ci{\Xman}{\bR\setminus0}$ containing the constant function $\Xman\mapsto 1$ being the image $\tfib{}(\id_{\Mman})$ of $\id_{\Mman}$.
Hence $\tfib{}(\DiffPlFix{\Mman}{\Xman,\pr}) = \Ci{\Xman}{(0;+\infty)}$, and the induced map
\[
    \tfib{}\colon\DiffPlFix{\Mman}{\Xman,\pr} \to \Ci{\Xman}{(0;+\infty)}
\]
is still a locally trivial fibration with fiber $\DiffNb{\Mman}{\Xman}$.
Since $\Ci{\Xman}{(0;+\infty)}$ is evidently contractible, we obtain that the inclusion of the fiber $\DiffNb{\Mman}{\Xman} \subset \DiffPlFix{\Mman}{\Xman,\pr}$ is a homotopy equivalence.
Together with the homotopy equivalence~\eqref{equ:DplFMXp__D+FMX} this gives the desired result.
\end{proof}

\subsection{Klein bottle}\label{sect:klein-bottle}
Let $\Torus = \Circle \times \Circle$ be the $2$-torus.
It will be convenient to say that the curves $\Circle\times\{w\}$, $w\in\Circle$, are \term{longitudes}, while the curves $\{w\}\times\Circle$ are \term{meridians}.

Consider the following orientation reversing involution without fixed points:
\[
    \xi\colon\Torus\to\Torus,
    \qquad
    \xi(z,w) = (-z, \bar{w}).
\]
It defines an action of $\bZ_2$ on $\Torus$, so that the corresponding quotient space $\KleinBottle:=\Torus/\xi$ is a non-orientable surface called \term{Klein bottle}.
In turn, the respective quotient map $\pr\colon\Torus\to\KleinBottle$ is then an oriented double covering of $\KleinBottle$.

We recall here several facts (more detailed than Table~\ref{tbl:hom_types:DidMX}) on the homotopy type of the group $\Diff(\KleinBottle)$ of diffeomorphisms of $\KleinBottle$ and the generators of $\pi_0\Diff(\KleinBottle)$.

\newcommand\tlongi[1]{\hat{\lambda}_{#1}}
\newcommand\tmerid[1]{\hat{\mu}_{#1}}
\newcommand\talpha{\tilde{\alpha}}
\newcommand\klongi[1]{\lambda_{#1}}
\newcommand\kmerid[1]{\mu_{#1}}

\newcommand\kyHom{y}
\newcommand\tyHom{\hat{\kyHom}}
\newcommand\ktHom{\tau}
\newcommand\ttHom{\hat{\ktHom}}

Let $J_{1}$ and $J_{2}$ be connected components of $\Circle\setminus\{\pm i\}$ containing $-1$ and $1$ respectively.
Consider the following union $\talpha = \Circle \times \{\pm i\}$ of two longitudes.
Then $\Torus\setminus\talpha$ consists of two connected components whose closures are the following cylinders:
\begin{align*}
    \tMob{1} &= \Circle\times J_{1}, &
    \tMob{2} &= \Circle\times j_{2}.
\end{align*}
Evidently, $\xi$ exchanges the longitudes $\Circle \times \{i\}$ and $\Circle \times \{-i\}$ and also leaves $\tMob{1}$ and $\tMob{2}$ invariant.
Denote $\alpha = \pr(\talpha)$, $\kMob{i} = \pr(\tMob{i})$, $i=1,2$.
Then $\kMob{1}$ and $\kMob{2}$ are M\"{o}bius bands in $\KleinBottle$ so that $\alpha = \partial\kMob{1} = \partial\kMob{2} = \kMob{1}\cap\kMob{2}$ and $\KleinBottle = \kMob{1} \cup \kMob{2}$.

Then $\klongi{1} = \pr(\Circle\times\{1\})$ and $\klongi{2} = \pr(\Circle\times\{-1\})$ are the ``\term{middle circles}'' of the M\"{o}bius bands $\kMob{1}$ and $\kMob{2}$ respectively, while $\kmerid{} = \pr(\{1\}\times\Circle)$ is the ``\term{meridian}'' of $\KleinBottle$.

Define further the following diffeomorphisms $\tyHom,\ttHom\colon\Torus\to\Torus$:
\[
    \tyHom(z,w)=(\bar{z},w),
    \qquad
    \ttHom(z,w)=(z,-\bar{w}).
\]
Evidently, they both have order $2$ and commute.
Moreover, they also commute with $\xi$, and therefore yield unique diffeomorphisms $\kyHom,\ktHom\colon\KleinBottle\to\KleinBottle$, such that $\pr\circ\tyHom=\kyHom\circ\pr$ and $\pr\circ\ttHom=\ktHom\circ\pr$.

\begin{subremark}\rm
Note that $\tyHom$ leaves invariant each longitude of $\Torus$ but reverses its orientation.
In particular, $\tMob{1}$ and $\tMob{2}$ are invariant under $\tyHom$, whence $\kyHom$ leaves invariant $\kMob{1}$ and $\kMob{2}$ but reverses orientation of their common boundary $\alpha$ and the middle circles $\klongi{1}$ and $\klongi{2}$.

On the other hand, $\ttHom$ pointwise fixes $\talpha$ and exchanges $\tMob{1}$ and $\tMob{2}$, whence $\ktHom$ pointwise fixes $\alpha$ and exchanges $\kMob{1}$ and $\kMob{2}$.

In fact, $\ktHom$ is isotopic to a Dehn twist along the meridian $\mu$, while $\kyHom$ is the \term{$Y$-homeomorphism} introduced by Lickorish~\cite{Lickorish:MPCPS:1963}.
\end{subremark}

\begin{subremark}\label{rem:curves_on_K}\rm
Let $\omega \subset \KleinBottle$ be a simple closed curve.
Then $\omega$ is called \term{two-sided} or \term{orientation preserving} if it has a neighborhood homeomorphic to an open cylinder.
Otherwise, $\omega$ has a neighborhood homeomorphic with an open M\"{o}bius band, and is called in this case \term{one-sided} or \term{orientation reversing}.
Fix some orientation of $\omega$, and denote by $-\omega$ the same curve but passed in the opposite direction.
Then by~\cite[Lemma~1]{Lickorish:MPCPS:1963} exactly one of the following cases is possible:
\begin{enumerate}[label={\rm(\alph*)}]
\item\label{enum:K_curve:mob_bands}
$\KleinBottle\setminus\omega$ is a union of two open M\"{o}bius bands, and in this case $\omega$ is isotopic to $\alpha$ or $-\alpha$;

\item\label{enum:K_curve:disk}
$\omega$ bounds a $2$-disk;

\item\label{enum:K_curve:meridian}
$\omega$ does not cut $\KleinBottle$, and is isotopic to the \term{meridian} $\kmerid{}$ of $\KleinBottle$.
\end{enumerate}
\end{subremark}

Define further the following $\Cinfty$ action of $\Circle$ on $\KleinBottle$ by
\begin{equation}\label{equ:S1_act_on_T2}
    \tSact\colon\Torus\times\Circle\to\Torus,
    \qquad
    \tSact(z,w,t) = (z t, w).
\end{equation}
Evidently,
\begin{equation}\label{equ:Gt_commute_xi_t_y}
\begin{aligned}
\tSact_t\circ \xi(z,w)
    &= \tSact_t(-z,\bar{w})
     = (-z t,\bar{w}) = \xi(z t, w) = \xi\circ \tSact_t(z,w),
\\
\tSact_t\circ \ttHom(z,w)
    &= \tSact_t(z,-\bar{w})
     = (z t, -\bar{w}) = \ttHom(z t, w) = \ttHom\circ \tSact_t(z,w),
\\
\tSact_t\circ \tyHom(z,w)
    &= \tSact_t(\bar{z},w)
     = (\bar{z} t, w) = \tyHom(z \bar{t}, w) = \tyHom\circ \tSact_{\bar{t}}(z,w).
\end{aligned}
\end{equation}
In other words, this action commutes with $\xi$ and therefore it yields a $\Cinfty$ action $\kSact_t\colon\KleinBottle\times\Circle\to\KleinBottle$ such that $\pr\circ \tSact_t = \kSact_t\circ\pr$.
Since $\bar{t} = t^{-1}$ for all $t\in\Circle$, the latter two identities in~\eqref{equ:Gt_commute_xi_t_y} imply that
\begin{align*}
    &\kSact_t \circ \ktHom = \ktHom \circ \kSact_t, &
    &\kSact_t \circ \kyHom = \kyHom \circ \kSact_t^{-1}.
\end{align*}
Hence we have the following subgroups of $\Diff(\KleinBottle)$:
\begin{align*}
&SO(2) \cong \langle\{\kSact_t\}_{t\in\Circle}\rangle,&
&O(2) \cong \langle SO(2),  \ \kyHom  \rangle,&
&O(2) \times \bZ_2 \cong \langle O(2), \ktHom \rangle.
\end{align*}

\begin{subtheorem}\label{th:DiffKleinBottle}
The inclusion $O(2) \times \bZ_2 \cong \langle \{\kSact_t\}_{t\in\Circle}, \kyHom, \ktHom \rangle \subset \Diff(\KleinBottle)$ is a homotopy equivalence.
In particular, $\DiffId(\KleinBottle)$ is homotopy equivalent to $SO(2)$, and $\pi_0\DiffId(\KleinBottle) \cong \langle\kyHom, \ktHom\rangle = \bZ_2\oplus\bZ_2$.

Let also $\DiffPlFix{\KleinBottle}{\alpha}$ be the group of diffeomorphisms of $\KleinBottle$ fixed on $\alpha$ and leaving invariant $\kMob{1}$ and $\kMob{2}$.
Then $\DiffPlFix{\KleinBottle}{\alpha}$ is contractible and, in particular, coincides with $\DiffIdFix{\KleinBottle}{\alpha}$.
\end{subtheorem}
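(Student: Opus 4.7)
The plan is to prove the theorem in three steps: first identify the homotopy type of $\Diff(\KleinBottle)$ via classical computations and our specific $O(2)\times\bZ_2$-subgroup, then handle $\DiffPlFix{\KleinBottle}{\alpha}$ by cutting $\KleinBottle$ along $\alpha$ and invoking Theorem~\ref{th:DnbMX__DplFMX_he}.

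For the first assertion, I would combine two classical facts summarised in Table~\ref{tbl:hom_types:DidMX} and in the work of Lickorish~\cite{Lickorish:MPCPS:1963}: namely that $\DiffId(\KleinBottle) \simeq S^1$ (Gramain) and that $\pi_0 \Diff(\KleinBottle) \cong \bZ_2 \oplus \bZ_2$ is generated by the $Y$-homeomorphism and a Dehn twist along $\kmerid{}$. The task reduces to two matching steps. First, I would show that the inclusion $SO(2) = \{\kSact_t\}_{t\in\Circle} \hookrightarrow \DiffId(\KleinBottle)$ is a homotopy equivalence: both sides are aspherical with $\pi_1 \cong \bZ$, so it suffices to verify that the loop $t\mapsto\kSact_t$ generates $\pi_1\DiffId(\KleinBottle)$; this can be done via the evaluation fibration $\DiffId(\KleinBottle) \to \KleinBottle$, $h\mapsto h(x_0)$, at a regular point $x_0$ of the $\kSact$-action, since the $\kSact$-orbit of $x_0$ is then a longitude representing a generator of the image of $\pi_1\DiffId(\KleinBottle)\to\pi_1(\KleinBottle)$. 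Second, I would observe that $\kyHom$ is by construction Lickorish's $Y$-homeomorphism and $\ktHom$ is isotopic to a Dehn twist along $\kmerid{}$ (as remarked in the text), so the four elements $\kyHom^{\e}\ktHom^{\delta}$, $\e,\delta\in\{0,1\}$, exhaust the four components of $\Diff(\KleinBottle)$. Together these two steps show that $O(2)\times\bZ_2$ meets each component of $\Diff(\KleinBottle)$ in a deformation retract, hence the inclusion is a homotopy equivalence.

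For the contractibility of $\DiffPlFix{\KleinBottle}{\alpha}$, I would cut $\KleinBottle$ along the regular two-sided curve $\alpha$. Since $\alpha$ satisfies hypothesis~\ref{enum:X:proper} of Theorem~\ref{th:DnbMX__DplFMX_he}, that theorem provides a homotopy equivalence $\DiffNb{\KleinBottle}{\alpha} \simeq \DiffPlFix{\KleinBottle}{\alpha}$; moreover, a diffeomorphism fixed near $\alpha$ restricts independently to each $\kMob{i}$, yielding
\[
    \DiffNb{\KleinBottle}{\alpha} \;\cong\; \DiffNb{\kMob{1}}{\alpha} \times \DiffNb{\kMob{2}}{\alpha}.
\]
Applying Theorem~\ref{th:DnbMX__DplFMX_he} once more to each Möbius band reduces everything to showing that $\DiffFix{\Mobius}{\partial\Mobius}$ is contractible. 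The identity component $\DiffIdFix{\Mobius}{\partial\Mobius}$ is contractible by row~1 of Table~\ref{tbl:hom_types:DidMX}, so it remains to check that $\pi_0\DiffFix{\Mobius}{\partial\Mobius} = 0$. For this I would lift any $h\in\DiffFix{\Mobius}{\partial\Mobius}$ to a $\bZ_2$-equivariant diffeomorphism of the orientation double cover $\Circle\times[0,1]$ (with deck involution $\sigma(z,t)=(-z,1-t)$), and observe that the only mapping class of the annulus rel boundary which is $\sigma$-equivariant is trivial, because $\sigma$ conjugates the generating Dehn twist $\tau$ to $\tau^{-1}$. Once $\DiffPlFix{\KleinBottle}{\alpha}$ is shown to be contractible it is in particular path-connected, hence coincides with its identity component $\DiffIdFix{\KleinBottle}{\alpha}$.

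The main obstacles I anticipate are the two ``generator identifications'' in the first part: checking that $t\mapsto\kSact_t$ generates $\pi_1\DiffId(\KleinBottle)$, and that $\kyHom$ and $\ktHom$ together exhaust $\pi_0\Diff(\KleinBottle)$. These are essentially book-keeping with the classical results of Gramain and Lickorish, but require care with orientation conventions on $\alpha$ and the M\"obius cores; the remaining pieces of the argument are routine applications of the linearisation technique of Theorem~\ref{th:DnbMX__DplFMX_he} and the equivariant mapping-class-group computation sketched above.
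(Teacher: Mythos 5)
Your proposal follows essentially the same route as the paper's proof: the first part is settled by invoking the classical results of Earle--Eells/Gramain and Lickorish, and the second part cuts $\KleinBottle$ along $\alpha$, applies Theorem~\ref{th:DnbMX__DplFMX_he} to pass to $\DiffNb{\KleinBottle}{\alpha}$, and uses the isomorphism $\DiffNb{\KleinBottle}{\alpha}\cong\DiffNb{\kMob{1}}{\alpha}\times\DiffNb{\kMob{2}}{\alpha}$ together with contractibility of $\DiffFix{\kMob{i}}{\alpha}$. The only genuine addition is your explicit check that $\pi_0\DiffFix{\Mobius}{\partial\Mobius}=0$ via the $\bZ_2$-equivariant argument on the orientation double cover --- a small gap the paper leaves implicit, since Table~\ref{tbl:hom_types:DidMX} strictly describes only identity components --- and that sketch is correct.
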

\begin{proof}
The statement that the inclusion $SO(2) \subset \DiffId(\KleinBottle)$ is a homotopy equivalence is independently proved in~\cite[page 41, Corollary]{EarleEells:JGD:1969} and \cite[Th\'{e}or\`{e}me~1]{Gramain:ASENS:1973}.
The group $\pi_0\DiffId(\KleinBottle)$ and its generators $\kyHom$ and $\ktHom$ are described in~\cite{Lickorish:MPCPS:1963}.

To prove contractibility of $\DiffPlFix{\KleinBottle}{\alpha}$ note that by Theorem~\ref{th:DnbMX__DplFMX_he} the inclusions
\begin{align*}
    &\DiffNb{\KleinBottle}{\alpha}\subset\DiffPlFix{\KleinBottle}{\alpha}, &
    &\DiffNb{\kMob{i}}{\alpha} \subset \DiffFix{\kMob{i}}{\alpha}, \ i=1,2,
\end{align*}
are homotopy equivalences, and by Table~\ref{tbl:hom_types:DidMX} (first line) the groups $\DiffFix{\kMob{i}}{\alpha}$ are contractible.
Moreover, the map
\[
    \rho\colon\DiffNb{\KleinBottle}{\alpha} \to \DiffNb{\kMob{1}}{\alpha} \times \DiffNb{\kMob{2}}{\alpha},
    \qquad
    \rho(\dif) = \bigl(\restr{\dif}{\kMob{1}}, \restr{\dif}{\kMob{2}}\bigr),
\]
is, evidently, an \term{isomorphism} of topological groups.
Hence, $\DiffNb{\KleinBottle}{\alpha}$ and therefore $\DiffPlFix{\KleinBottle}{\alpha}$, are also contractible.
\end{proof}

\begin{subcorollary}\label{cor:isot_klein_bottle}
{\rm 1)}~The homomorphism $\mathsf{t}\colon\Diff(\KleinBottle) \to \Aut\bigl(H_1(\KleinBottle, \bZ)\bigr)$, associating to each $\dif\in\Diff(\KleinBottle)$ the corresponding automorphism of the first integral homology group $H_1(\KleinBottle, \bZ)$, is surjective and has the kernel $\DiffId(\KleinBottle)$.
In particular, it yields an isomorphism
\[
\pi_0\DiffId(\KleinBottle) \cong
\Diff(\KleinBottle)/\DiffId(\KleinBottle)\cong
\langle\ktHom, \kyHom\rangle = \bZ_2\oplus\bZ_2 \cong
\Aut\bigl(H_1(\KleinBottle, \bZ)\bigr).
\]

{\rm 2)}~Every diffeomorphism $\dif\in\Diff(\KleinBottle)$ is isotopic to a diffeomorphism $\dif'$ leaving invariant $\alpha$, and thus preserving or exchanging $\kMob{1}$ and $\kMob{2}$.
In that case
\begin{itemize}
\item $\dif'$ preserves $\kMob{1}$ and $\kMob{2}$ iff it is isotopic either to $\id_{\KleinBottle}$ or to $\kyHom$;
\item $\dif'$ preserves orientation of $\alpha$ iff it is isotopic either to $\id_{\KleinBottle}$ or to $\ktHom$.
\end{itemize}

{\rm 3)}~Let $\gamma\colon[0;1]\to\Diff(\KleinBottle)$ be a loop at $\id_{\KleinBottle}$, i.e.\ $\gamma$ is a continuous path with $\gamma(0)=\gamma(1)=\id_{\KleinBottle}$.
Fix any point $u\in\KleinBottle$ and consider the loop $\gamma^u\colon[0;1]\to\KleinBottle$, $\gamma^u(t) = \gamma(t)(u)$ being the ``\term{trace}'' of $u$.
Then the path $\gamma$ is a generator of $\pi_1\DiffId(\KleinBottle,\id_{\KleinBottle})\cong \bZ$ iff $\gamma^u=\pm\alpha$ in $H_1(\KleinBottle,\bZ)$.
\end{subcorollary}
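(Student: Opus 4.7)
My approach combines Theorem~\ref{th:DiffKleinBottle} (which gives $\pi_0\Diff(\KleinBottle)\cong\bZ_2\oplus\bZ_2=\langle\kyHom,\ktHom\rangle$) with explicit homological calculations using the double cover $\pr\colon\Torus\to\KleinBottle$ and the decomposition $\KleinBottle=\kMob{1}\cup\kMob{2}$. For part~(1), I first observe that $\mathsf{t}$ is trivial on $\DiffId(\KleinBottle)$ and hence factors through $\pi_0\Diff(\KleinBottle)$, a group of order~$4$. Since $H_1(\KleinBottle,\bZ)\cong\bZ\{a\}\oplus\bZ_2\{b\}$ with $a=\klongi{1}$ and $b=\kmerid{}$, any automorphism must fix $b$ (the unique element of order~$2$) and send $a\mapsto\pm a+\epsilon b$, so $\Aut(H_1(\KleinBottle,\bZ))$ also has order~$4$, and it suffices to produce two distinct non-trivial elements in the image of $\mathsf{t}$.

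Using the formulas $\tyHom(z,w)=(\bar z,w)$ and $\ttHom(z,w)=(z,-\bar w)$, I read off $\mathsf{t}(\kyHom)(a)=-a$ and $\mathsf{t}(\kyHom)(b)=b$ (as $\tyHom$ reverses orientation of $\Circle\times\{1\}$ and fixes $\{1\}\times\Circle$ pointwise), and $\mathsf{t}(\ktHom)(a)=\klongi{2}$, $\mathsf{t}(\ktHom)(b)=b$ (as $\ttHom$ carries $\Circle\times\{1\}$ onto $\Circle\times\{-1\}$ by the identity on the first factor and reverses orientation of $\{1\}\times\Circle$, with $b$ of order~$2$). A Mayer--Vietoris computation on $\KleinBottle=\kMob{1}\cup\kMob{2}$ with intersection $\alpha$, using $\alpha=2\klongi{i}$ in $H_1(\kMob{i},\bZ)$, then identifies $\klongi{2}=a+b$ in $H_1(\KleinBottle,\bZ)$. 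Hence $\mathsf{t}(\kyHom)$ and $\mathsf{t}(\ktHom)$ are distinct non-trivial elements of $\Aut(H_1(\KleinBottle,\bZ))$, so $\mathsf{t}$ is surjective on $\pi_0\Diff(\KleinBottle)$, hence bijective by counting, and $\ker\mathsf{t}=\DiffId(\KleinBottle)$.

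For part~(2), every $\dif\in\Diff(\KleinBottle)$ is isotopic to one of $\id,\kyHom,\ktHom,\kyHom\ktHom$, and all four preserve $\alpha$ because their lifts to $\Torus$ preserve $\talpha=\Circle\times\{\pm i\}$. Direct inspection shows that $\id$ and $\kyHom$ preserve each $\kMob{i}$ while $\ktHom$ and $\ktHom\kyHom$ swap them (as $\ttHom$ does), and that $\id$ and $\ktHom$ preserve orientation of $\alpha$ (since $\ttHom$ fixes $\talpha$ pointwise) while $\kyHom$ and $\kyHom\ktHom$ reverse it (since $\tyHom$ reverses orientation of each longitude). The four resulting combinations of these two bits are pairwise distinct, so each characterises precisely one of the four isotopy classes in $\pi_0\Diff(\KleinBottle)$, yielding both biconditionals.

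For part~(3), Theorem~\ref{th:DiffKleinBottle} gives $\DiffId(\KleinBottle)\simeq SO(2)$, so $\pi_1\DiffId(\KleinBottle)\cong\bZ$ is generated by the rotation loop $\gamma_0(t)=\kSact_{e^{2\pi it}}$. The assignment $\gamma\mapsto\gamma^u$ descends to a homomorphism $\pi_1\DiffId(\KleinBottle)\to H_1(\KleinBottle,\bZ)$ (concatenation of loops in $\Diff(\KleinBottle)$ corresponds to concatenation of traces), which does not depend on $u$: for a path $\beta$ from $u$ to $u'$, the $2$-chain $H(s,t)=\gamma(t)(\beta(s))$ has boundary $\gamma^{u'}-\gamma^u$, so the two traces are homologous. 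Computing the trace of $\gamma_0$ at $u=\pr(z_0,w_0)$ yields $\gamma_0^u(t)=\pr(z_0 e^{2\pi it},w_0)$, i.e.\ $\pr$ applied once around the longitude $\Circle\times\{w_0\}$; by homotopy invariance in $H_1(\Torus)$ this is homologous to $\pr_*[\Circle\times\{i\}]=\alpha$, since $\pr$ restricts to a diffeomorphism from $\Circle\times\{i\}$ onto $\alpha$. Hence any loop $\gamma$ representing $n\cdot\gamma_0$ has trace $n\alpha$, which equals $\pm\alpha$ if and only if $n=\pm 1$, because $\alpha=2a$ has infinite order in $H_1(\KleinBottle,\bZ)$. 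The main obstacle will be the Mayer--Vietoris identification $\klongi{2}=a+b$ in part~(1): without it, $\mathsf{t}(\ktHom)$ might coincide with the identity on $H_1$ and $\ker\mathsf{t}$ would be strictly larger than $\DiffId(\KleinBottle)$.
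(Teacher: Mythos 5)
Your proposal is correct in substance and follows essentially the same route as the paper: both arguments reduce everything to the four representatives $\id_{\KleinBottle},\kyHom,\ktHom,\kyHom\circ\ktHom$ of $\pi_0\Diff(\KleinBottle)$ supplied by Theorem~\ref{th:DiffKleinBottle} and compute their action on $H_1(\KleinBottle,\bZ)\cong\bZ\oplus\bZ_2$ (the paper simply tabulates the four automorphisms in~\eqref{equ:aut_H1K}, where you derive $[\klongi{2}]=a+b$ by Mayer--Vietoris), and in part~(3) the paper's reduction to the homeomorphism $SO(2)\to\alpha$, $t\mapsto\kSact_t(u)$, is equivalent to your use of the homomorphism property of the trace together with the fact that $[\alpha]=2a$ has infinite order.

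The one step you should tighten is in part~(2). The ``only if'' directions require that the two bits --- preserving versus exchanging $\kMob{1},\kMob{2}$, and preserving versus reversing the orientation of $\alpha$ --- are \emph{isotopy invariants} of a diffeomorphism $\dif'$ with $\dif'(\alpha)=\alpha$; this does not follow merely from inspecting the four standard representatives, since a priori an arbitrary $\dif'$ isotopic to, say, $\ktHom$ might still preserve each M\"obius band. The paper closes exactly this point by reading both bits off the homological invariant $\mathsf{t}(\dif')$: if $\dif'(\kMob{1})=\kMob{j}$ then $\dif'(\klongi{1})$ is a core circle of $\kMob{j}$, so $\mathsf{t}(\dif')[\klongi{1}]=\pm[\klongi{j}]$, and the sets $\pm(1,0)$ and $\pm(1,1)$ are disjoint; similarly $\dif'$ preserves the orientation of $\alpha$ iff $\mathsf{t}(\dif')(2,0)=(2,0)$. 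Since $\mathsf{t}(\dif')$ depends only on the isotopy class, the four combinations of bits do separate the four classes, and your conclusion follows. With this (easily supplied, using machinery you already set up in part~(1)) addition, the proof is complete.
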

\begin{proof}
1) Choose the standard counterclockwise orientation of $\Circle$.
This gives orientations on $\Circle\times\{i\}$, $\Circle\times\{1\}$, and $\{1\}\times\Circle$.
Choose also on $\Circle\times\{-i\}$ and $\Circle\times\{-1\}$ opposite orientations.
This gives orientations on $\klongi{1}$, $\klongi{2}$, $\alpha$ and $\kmerid{}$, so they can be regarded as elements of $H_1(\KleinBottle, \bZ)$.
Then $H_1(\KleinBottle, \bZ) \cong \bZ \oplus \bZ_2 = \langle\klongi{1}\rangle \oplus \langle\kmerid{}\rangle$.
Thus with these coordinates
\begin{align*}
    &\klongi{1} = (1,0),&
    &\kmerid{} = (0,1),&
    &\klongi{2} = (1,1),&
    &\alpha = \klongi{1} = \klongi{2} = (2,0).
\end{align*}
It is easy to see that $\Aut(H_1(\KleinBottle, \bZ))$ isomorphic with $\bZ_{2}\oplus\bZ_{2}$ and consists of the following automorphisms begin exactly the images of distinct elements of $\langle\kyHom, \ktHom\rangle$:
\begin{equation}\label{equ:aut_H1K}
\begin{aligned}
\mathsf{t}(\id_{\KleinBottle}) &\colon (n,\delta) \mapsto (n,  \delta), &\qquad\qquad
\mathsf{t}(\kyHom)             &\colon (n,\delta) \mapsto (-n, \delta), \\
\mathsf{t}(\ktHom)             &\colon (n,\delta) \mapsto (n,  \delta + (n\bmod2)), &
\mathsf{t}(\ktHom\circ\kyHom)  &\colon (n,\delta) \mapsto (-n, \delta + (n\bmod2)),
\end{aligned}
\end{equation}
where $n\in\bZ$ and $\delta\in\bZ_{2} = \{0,1\}$.

2) By Theorem~\ref{th:DiffKleinBottle} every $\dif\in\Diff(\KleinBottle)$ is isotopic to some diffeomorphism  from the group $\langle\kyHom, \ktHom\rangle$, each of whose elements leave $\alpha$ invariant.

Now let $\dif\in\Diff(\KleinBottle)$ be such that $\dif(\alpha)=\alpha$.
Then $\dif$ preserves $\kMob{1}$ and $\kMob{2}$ iff $\mathsf{t}(\dif)$ preserves $\klongi{1}$ and $\klongi{2}$, so it coincides with either automorphisms from the first line of~\eqref{equ:aut_H1K}, and thus $\dif$ is isotopic either to $\id_{\KleinBottle}$ or to $\kyHom$.

Similarly, $\dif$ preserves orientation of $\alpha$ iff $\mathsf{t}(\dif)(\alpha) = \alpha$, that is $\mathsf{t}(\dif)(2,0) = (2,0)$.
But this means that $\mathsf{t}(\dif)(1,0)$ equals either to $(1,0)$ or $(1,1)$, whence $\mathsf{t}(\dif)$ coincides with either of automorphisms from the left column of~\eqref{equ:aut_H1K}, and thus $\dif$ is isotopic either to $\id_{\KleinBottle}$ or to $\ktHom$.

3) First note that since $\KleinBottle$ is path connected, the \term{homotopy class of $\gamma^u$ does not depend on a particular choice of $u\in\KleinBottle$}.
Indeed, evidently, $\gamma$ yields an isotopy $G\colon[0;1]\times\KleinBottle\to\KleinBottle$ given by $G(t,x) =\gamma(t)(x)$.
In particular, $\gamma^u = \restr{G}{[0;1]\times\{u\}}$.
Let $v\in\KleinBottle$ be any point, and $\phi\colon[0;1]\to\KleinBottle$ be any path with $\phi(0)=u$ and $\phi(1)=v$.
Then the map $\Phi\colon[0;1]\times[0;1]\to\KleinBottle$, $\Phi(s,t)=G(t,\phi(s))$, is a homotopy between the loops $\gamma^u$ and $\gamma^v$.

Let us also show that \term{the homotopy class of $\gamma^u$ does not change under homotopies of $\gamma$.}
Indeed, let $h\colon[0;1]\times[0;1]\to\Diff(\KleinBottle)$ be any homotopy of $\gamma$, i.e.\ $h_0 = \gamma$ and $h([0;1]\times\{0,1\}) = \{\id_{\KleinBottle}\}$.
Then it also yields a ``homotopy of isotopies'' $H\colon[0;1]\times[0;1]\times\KleinBottle\to\KleinBottle$, $H(s,t,x) = h(s,t,x)$, so that its restriction $h^u = \restr{G}{[0;1]\times[0;1]\times\{u\}}$ is a homotopy of $\gamma^u$.
In particular, for each $t\in[0;1]$ the homotopy class of the loop $(h_t)^u = \restr{G}{\{t\}\times[0;1]\times\{u\}}$ coincides with $(h_0)^u=\gamma^u$.

Finally, since the inclusion $SO(2) \subset \DiffId(\KleinBottle)$ is a homotopy equivalence, and due to the previous two paragraphs, one can assume that $\gamma([0;1]) \subset SO(2) = \{\kSact_{t}\}_{t\in\Circle}$ and $u\in\alpha$.
Note also that the map $\rho_{u}\colon SO(2) \to \alpha$, $\rho_{u}(t)=\kSact_{t}(u)$, is a homeomorphism, and $\gamma^u = \rho_{u} \circ \gamma$.
Hence, $\gamma^u$ represents a loop in $\KleinBottle$ homotopic to $\alpha$ iff $\gamma$ represents a generator of $\pi_1 SO(2) = \pi_1\DiffId(\KleinBottle)$.
\end{proof}


\section{Proof of Lemma~\ref{intro:lm:top_decomposition}}\label{sect:proof:prop:top_decomposition}
Let $\func\in\fclassr{\KleinBottle}$.
We need to prove that exactly one of the cases~\ref{intro:klein_topmebius}--\ref{into:graph:one_cycle} of Lemma~\ref{intro:lm:top_decomposition} holds.

Suppose $\krgraphf$ is a tree, i.e.\ has no cycles.
This means that every point of every open edge of $\krgraphf$ splits that graph into two pieces.
In other words, every regular contour $\alpha$ of $\func$ (as a point on some open edge of $\krgraphf$) separates $\KleinBottle$ into two connected subsurfaces $\kMob{1}$ and $\kMob{2}$.
As mentioned in Remark~\ref{rem:curves_on_K}, this implies that
\begin{itemize}
   \item[$(*)$] either $\kMob{1}$ and $\kMob{2}$ are M\"{o}bius bands, so we get the case~\ref{intro:klein_topmebius};
   \item[$(**)$] or $\kMob{1}$ is a Klein bottle with hole and $\kMob{2}$ is an open $2$-disk.
\end{itemize}

Thus assume that \term{each regular contour of $\func$ satisfies $(**)$, i.e.\ bounds a $2$-disk}.
We need to show that then the case~\ref{intro:klein_topdisks} holds, i.e.\ there exists a unique \textsl{critical} contour $\beta$ of $\func$ such that the complement $\KleinBottle\setminus\beta$ is a disjoint union of finitely many open disks.

\newcommand\uvrt{u}
\newcommand\vvrt{v}
\newcommand\wvrt{w}
\begin{lemma}\label{lm:krgraph-orientation}
If each regular contour of $\func$ bounds a $2$-disk, then $\krgraphf$ admits an orientation of edges such that each of its vertices has at most one incoming edge.
\end{lemma}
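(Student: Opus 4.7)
The plan is to define the orientation as follows. For each open edge $e$ of $\krgraphf$ and any regular contour $\alpha$ on $e$, the hypothesis together with Remark~\ref{rem:curves_on_K} implies that $\alpha$ separates $\KleinBottle$ into a closed $2$-disk $D_\alpha$ and a Klein bottle with a hole. As $\alpha$ varies along $e$, the side of $\alpha$ containing the disk cannot switch (it is distinguished by being simply connected), so it determines a well-defined subtree of $\krgraphf$ lying on one side of $e$. Orient $e$ so that its head lies in this ``disk-side'' subtree.

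The claim to prove is then that each vertex $v$ of $\krgraphf$ has at most one incoming edge. I will argue by contradiction. Let $\beta$ be the critical contour at $v$, and suppose $e_1, e_2$ at $v$ are both incoming. Then for regular contours $\alpha_i$ on $e_i$ sufficiently close to $v$, we have $\beta \subset \Int D_i$, where $D_i := D_{\alpha_i}$. Taking the $\alpha_i$'s close enough to $\beta$ then allows us to arrange $\alpha_1 \subset \Int D_2$ and, symmetrically, $\alpha_2 \subset \Int D_1$.

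Now $\alpha_1$ is a simple closed curve inside the closed $2$-disk $D_2$, so by the Schoenflies theorem it bounds a subdisk $D'_1 \subset D_2$. But $\alpha_1$ bounds a unique disk in $\KleinBottle$ (the other component of $\KleinBottle \setminus \alpha_1$ is a Klein bottle with hole, not a disk), forcing $D_1 = D'_1 \subset D_2$. Symmetrically $D_2 \subset D_1$, so $D_1 = D_2$, whence $\alpha_1 = \partial D_1 = \partial D_2 = \alpha_2$, contradicting $\alpha_1 \cap \alpha_2 = \emptyset$.

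The main technical point is the well-definedness of the orientation, i.e.\ that the disk side of $\alpha$ is locally constant as $\alpha$ varies along $e$; this reduces to showing that which component of $\KleinBottle \setminus \alpha$ is simply connected cannot jump under a continuous isotopy of $\alpha$, a standard continuity argument. Once that is in hand, the proof is the clean Schoenflies-based contradiction above.
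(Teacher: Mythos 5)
Your orientation is defined exactly as in the paper: an edge points towards the side of a regular contour $\alpha$ on it that is the $2$-disk component $D_\alpha$ of $\KleinBottle\setminus\alpha$. For well-definedness you invoke a ``standard continuity argument''; the paper makes this fully explicit by noting that two regular contours $w, w'$ on the same edge bound a cylinder $C$, so that $K_w = K_{w'} \cup w' \cup C$ is again a Klein bottle with hole (which cannot sit inside a disk), forcing the orientations determined by $w$ and $w'$ to agree. Your version is less concrete but amounts to the same thing and can be made rigorous, e.g.\ by fixing a one-sided curve in $K_{\alpha_0}$ as a witness that persists under small perturbations of $\alpha$.

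Where your proof genuinely diverges from the paper is in ruling out two incoming edges at a vertex $v$. The paper observes that the two Klein-bottles-with-hole $K_{w}$ and $K_{w'}$ (the non-disk sides of $w\in(u,v)$ and $w'\in(u',v)$) are \emph{disjoint} subsurfaces of $\KleinBottle$, and appeals to the fact that the Klein bottle cannot contain two disjoint embedded Klein bottles with hole (its non-orientable genus is $2$). You instead go through the disk sides: $\alpha_1\subset\Int D_2$ and $\alpha_2\subset\Int D_1$, then the Schoenflies theorem inside $D_2$ forces $D_1\subset D_2$, and symmetrically $D_2\subset D_1$, hence $D_1=D_2$ and $\alpha_1=\alpha_2$, a contradiction. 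This is correct and trades the genus fact for the more elementary observations that a simple closed curve in $\KleinBottle$ bounds a unique disk and that Schoenflies holds in a $2$-disk. A small point worth cleaning up: the hypothesis ``$\alpha_i$ close enough to $\beta$'' is not what makes $\alpha_1\subset\Int D_2$ and $\alpha_2\subset\Int D_1$ hold, and as stated it has a mild circularity ($D_i$ depends on $\alpha_i$). In fact these inclusions hold for \emph{any} choice of $\alpha_1\in e_1$, $\alpha_2\in e_2$: the preimage $p_f^{-1}\bigl(e_1\cup\{v\}\bigr)$ is connected, disjoint from $\alpha_2$ (since $e_1$, $e_2$ are distinct open edges and $v$ is a critical contour), and contains $\beta\subset D_2$, so it lies entirely in $D_2$; symmetrically for $\alpha_2$. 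With that adjustment your argument is a clean, valid alternative to the paper's.
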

\begin{proof}
Let $(\uvrt,\vvrt)$ be an open edge of $\krgraphf$ and $\wvrt\in(\uvrt,\vvrt)$ be any point, so $\uvrt$ and $\vvrt$ are critical contours of $\func$, while $\wvrt$ is a regular contour of $\func$.
By assumption $\KleinBottle\setminus\wvrt = K_{\wvrt} \sqcup D_{\wvrt}$, where $D_{\wvrt}$ is an open $2$-disk and $K_{\wvrt}$ is a Klein bottle with hole.
Then $\uvrt$ and $\vvrt$ are contained in distinct connected components of $\KleinBottle\setminus\wvrt$, and we orient the edge $(\uvrt,\vvrt)$ in the ``direction of $2$-disk'', i.e.\ orient $(\uvrt,\vvrt)$ from $\uvrt$ to $\vvrt$ iff either of the following equivalent conditions hold: $\uvrt\subset K_{\wvrt}$ or $\vvrt \subset D_{\wvrt}$.
In this case we will write $\uvrt <_{\wvrt} \vvrt$.

i) Let us show that \term{this definition does not depend on a particular choice of $\wvrt\in(\uvrt,\vvrt)$}.
We need to show that for any other point $\wvrt'\in(\uvrt,\vvrt)$ the conditions $\uvrt <_{\wvrt} \vvrt$ and $\uvrt <_{\wvrt'} \vvrt$ are equivalent.
Exchanging $\wvrt$ and $\wvrt'$ if necessary one can assume that $\wvrt\in(\uvrt,\wvrt')$.

Notice that $\wvrt$ and $\wvrt'$ bound an open cylinder $C$.
Let also $\KleinBottle\setminus\wvrt' = K_{\wvrt'} \sqcup D_{\wvrt'}$ be the corresponding splitting of $\KleinBottle$.
Since a Klein bottle with hole is not embeddable into a $2$-disk, it follows that if $\wvrt'\in(\uvrt,\wvrt)$ and $\uvrt <_{\wvrt} \vvrt$, i.e.\ $\uvrt \subset K_{\wvrt}$, then
\begin{align*}
    &\uvrt \subset K_{\wvrt} = K_{\wvrt'} \cup \wvrt' \cup C, &
    &D_{\wvrt'} = C \cup \wvrt \cup D_{\wvrt} \supset \vvrt.
\end{align*}
The latter inclusion $\vvrt \subset D_{\wvrt'}$ means that $\uvrt <_{\wvrt'} \vvrt$.
Hence if $\vvrt <_{\wvrt} \uvrt$, then $\vvrt <_{\wvrt'} \uvrt$ as well.

Thus, we can now write $\uvrt < \vvrt$ instead of $\uvrt <_{\wvrt} \vvrt$.

ii) Let us prove that every vertex $\vvrt$ of $\krgraphf$ has at most one incoming edge.
Suppose $(\uvrt,\vvrt)$ and $(\uvrt',\vvrt)$ are two incoming open edges for vertex $\vvrt$, $\wvrt\in(\uvrt,\vvrt)$ and $\wvrt'\in(\uvrt',\vvrt)$ be two points, and $\KleinBottle\setminus\wvrt = K_{\wvrt} \sqcup D_{\wvrt}$ and $\KleinBottle\setminus\wvrt' = K_{\wvrt'} \sqcup D_{\wvrt'}$ be the corresponding splitting of the complements of $\wvrt$ and $\wvrt'$.
The assumption $\uvrt<\vvrt$ and $\uvrt'<\vvrt$ means that $K_{\wvrt}$ and $K_{\wvrt'}$ are disjoint subsurfaces of the Klein bottle each homeomorphic to a Klein bottle with hole, which is impossible.
Hence $\vvrt$ may have at most one incoming edge.
\end{proof}

\begin{corollary}
Let $\Gamma$ be a finite directed tree such that for every vertex there exists at most one incoming edge.
Then $\Gamma$ has a unique ``minimal'' vertex $\beta$, i.e.\ all edges incident to $\beta$ are outgoing.
\end{corollary}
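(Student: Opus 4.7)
The plan is a short counting argument based on the standard identity that a tree with $n$ vertices has exactly $n-1$ edges, combined with the constraint on incoming edges. First I would set $n=|V(\Gamma)|$ and, for each vertex $v$, denote by $d^{-}(v)\in\{0,1\}$ its in-degree, so that the hypothesis reads $d^{-}(v)\leq 1$ for all $v$. Since every directed edge has a unique head, I get
\[
    \mysum_{v\in V(\Gamma)} d^{-}(v) \;=\; |E(\Gamma)| \;=\; n-1,
\]
the last equality because the underlying undirected graph of $\Gamma$ is a tree.

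For existence of a minimal vertex: if $d^{-}(v)=1$ for every $v\in V(\Gamma)$, the sum on the left would equal $n$, contradicting the equality with $n-1$. Hence there is at least one vertex $\beta$ with $d^{-}(\beta)=0$, i.e.\ every edge incident to $\beta$ is outgoing. For uniqueness: since $\mysum_{v} d^{-}(v) = n-1$ and each summand is $0$ or $1$, there are exactly $n-1$ vertices of in-degree $1$ and exactly one of in-degree $0$, namely $\beta$.

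The argument is essentially automatic; the only point that required any thought was noticing that the existence and uniqueness of $\beta$ can be packaged into the same pigeonhole computation rather than argued separately (e.g.\ by picking a maximal directed path, which would also work but is less clean). No step seems to pose a genuine obstacle.
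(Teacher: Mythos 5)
Your counting argument is correct, and it is genuinely different from the paper's proof. The paper argues constructively: for existence it starts at an arbitrary vertex and repeatedly follows the unique incoming edge backwards, using finiteness and acyclicity to conclude the process terminates at a vertex with no incoming edge; for uniqueness it takes the unique tree path between two putative minimal vertices and shows by induction along the path that the orientations must all point one way, contradicting minimality at the far end. Your pigeonhole computation $\sum_v d^-(v) = |E(\Gamma)| = n-1$ with $d^-(v)\in\{0,1\}$ gets existence and uniqueness in a single stroke and is shorter and cleaner; it does, however, sacrifice the constructive content of the paper's existence argument (which exhibits the minimal vertex as the terminus of a maximal directed path). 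Either approach is perfectly adequate for the role this corollary plays in the paper.
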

\begin{proof}
First let us show that such a vertex exists.
Indeed, let $\uvrt_0$ be any vertex of $\Gamma$.
If it is not minimal, then by assumption there exist a unique incoming edge $(\uvrt_1, \uvrt_0)$.
In turn, if $\uvrt_1$ is not minimal, there exist a unique incoming edge $(\uvrt_2, \uvrt_1)$ and so on.
Since $\Gamma$ is connected and contains no cycles, that process will stop at some vertex $\uvrt_n$ with all outgoing edges.

Now let us show that such a vertex is unique.
Suppose $\beta$ and $\beta'$ are two distinct minimal vertices.
Since $\Gamma$ is a tree, there exists a unique path $\beta=\uvrt_0, \uvrt_{1},\ldots,\uvrt_{n-1},\uvrt_{n}=\beta'$.
One can also assume that this path contains no other minimal vertices.
As $\beta$ is minimal, we have that $\beta =\uvrt_0 < \uvrt_1$, so $(\uvrt_0,\uvrt_1)$ is an incoming edge for $\uvrt_1$.
But then such an edge is unique for $\uvrt_1$, and thus $\uvrt_1 < \uvrt_2$.
Similarly, one proves that $\uvrt_0 < \uvrt_1 < \uvrt_2 <\cdots < \uvrt_{n-1} < \uvrt_{n}=\beta'$.
In particular, $\uvrt_{n-1} <\beta'$ which contradicts to the assumption that $\beta'$ is also minimal.
\end{proof}

It remains to note that the minimal vertex $\beta$ of $\krgraphf$ is exactly the critical contour satisfying~\ref{intro:klein_topdisks}.

\ref{into:graph:one_cycle}
Suppose $\krgraphf$ has a cycle $\Omega$.
Take any point $\gamma$ on some open edge of the cycle $\Omega$, so $\gamma$ is a regular contour of $\func$.
For simplicity assume that $\func(\gamma)$ is a regular value of $\func$, that is the corresponding level set $\Qman := \func^{-1}\bigl(\func(\gamma)\bigr)$ of $\func$ contains no critical points of $\func$.
Then $\Qman$ is a compact $1$-dimensional manifold of $\func$ without boundary, and thus it is a union of finitely many contours of $\func$.

We claim that $\gamma$ satisfies statement~\ref{into:graph:one_cycle} of Lemma~\ref{intro:lm:top_decomposition}.
Indeed, by definition $\dif(\Qman) = \Qman$ for all $\dif\in\Stabilizer{\func}$.
In particular, $\{\dif(\gamma)\}_{\dif\in\Stabilizer{\func}} \subset \Qman$ is a disjoint union of finitely many regular contours $\{\gamma_1,\dots,\gamma_m\}$.
Since each of them is isotopic to the meridian, it follows that $\KleinBottle\setminus \{\gamma_1,\dots,\gamma_m\}$ is a disjoint union of $m$ open cylinders, see e.g.~\cite[Lemma~2.4.(i)]{Epstein:AM:1966}.

\section{Proof of Theorem~\ref{intro:th:acylcic_mobius_orbit}}
\label{sect:proof:intro:th:acylcic_mobius_orbit}
Let $\func\in\fclassr{\Mman}$ be a function satisfying the statement~\ref{intro:klein_topmebius} or Lemma~\ref{intro:lm:top_decomposition}, i.e.\ there exists a \term{regular} contour $\alpha$ of $\func$ such that the complement $\KleinBottle\setminus\alpha$ is a disjoint union of two open M\"{o}bius bands.
Denote their closures by $\kMob{1}$ and $\kMob{2}$.
Then $\alpha = \partial\kMob{1} = \partial\kMob{2} = \kMob{1}\cap\kMob{2}$.

Let $\DiffPlInv{\KleinBottle}{\alpha}$ be the group of diffeomorphisms $\dif$ of $\KleinBottle$ which leave both $\kMob{1}$ and $\kMob{2}$ invariant, and
$\DiffPlFix{\KleinBottle}{\alpha}$ be the subgroup of $\DiffPlInv{\KleinBottle}{\alpha}$ fixed on $\alpha$.

Before proving Theorem~\ref{intro:th:acylcic_mobius_orbit}, let us establish certain properties of stabilizers and orbits of $\func$ with respect to diffeomorphism groups related with $\alpha$.
Recall that we have the action map~\eqref{intro:equ:action}:
\begin{equation}\label{equ:p_DK_Of}
    \actmap\colon\DiffId(\KleinBottle)\to\OrbitComp{\func}{\func},
    \qquad
    \actmap(\dif)=\func\circ\dif,
\end{equation}

\begin{lemma}\label{lm:prop:Da}
The following statements hold.
\begin{enumerate}[label={\rm(\arabic*)}, itemsep=1ex, leftmargin=*]
\item\label{enum:lm:prop:Da:Circle}
The inclusion $j\colon SO(2)\subset \DiffPl(\Circle)$ is a homotopy equivalence, e.g.~\cite[Prop.~4.2]{Ghys:EM:2001}.

\item\label{enum:lm:prop:Da:Sf_DplKa}
$\Stabilizer{\func} \subset \DiffPlInv{\KleinBottle}{\alpha}$, i.e.\ every $\dif\in\Stabilizer{\func}$ leaves invariant $\alpha$, and therefore each $\kMob{1}$ and $\kMob{2}$.
In particular, by Corollary~\ref{cor:isot_klein_bottle}, $\dif$ is isotopic
\begin{itemize}
\item either to $\id_{\KleinBottle}$, i.e.\ belongs to $\StabilizerIsotId{\func}$, and in that case it preserves orientation of $\alpha$;
\item or to the diffeomorphism $\kyHom\colon\KleinBottle\to\KleinBottle$, and then $\dif$ reverses orientation of $\alpha$.
\end{itemize}
Hence, either $\Stabilizer{\func}=\StabilizerIsotId{\func}$ or $\Stabilizer{\func}/\StabilizerIsotId{\func} \cong \bZ_2$.

\item\label{enum:lm:prop:Da:pik_S_Sa}
The ``restriction to $\alpha$ map'' $\rho_{\alpha}\colon\Stabilizer{\func} \to \Diff(\alpha)$, $\rho_{\alpha}(\dif) = \restr{\dif}{\alpha}$, is a locally trivial fibration with fiber $\Stabilizer{\func,\alpha}$.
In particular, $\rho_{\alpha}(\StabilizerIsotId{\func}) = \DiffPl(\alpha)$ and
we also have isomorphisms
\begin{equation}\label{equ:pi_k_S_Sa}
   \pi_k\bigl( \StabilizerIsotId{\func}, \Stabilizer{\func,\alpha}\bigr) \,\cong\,
   \pi_k\DiffPl(\alpha) \,\cong\,
   \pi_k\alpha =
   \begin{cases}
    \bZ, &k=1,\\
    0, &k\not=1
   \end{cases}
\end{equation}
where the first one is induced by $\rho_{\alpha}$ and comes from the long exact sequence of homotopy groups of $\rho_{\alpha}$, and the second one is induced by the inverse of the homotopy equivalence $j$ from~\ref{enum:lm:prop:Da:Circle}.

\item\label{enum:lm:prop:Da:Spla}
$\StabilizerIsotId{\func,\alpha} =
\Stabilizer{\func,\alpha}$.

\item\label{enum:lm:prop:Da:Opla}
$\OrbitPl{\func,\alpha} = \OrbitPlComp{\func}{\func,\alpha}= \OrbitComp{\func}{\func,\alpha}$.
\end{enumerate}
\end{lemma}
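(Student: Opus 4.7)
The plan is to dispose of statements~(1)--(5) in order, using as primary tools Theorem~\ref{th:DiffKleinBottle} (on the group $\DiffPlFix{\KleinBottle}{\alpha}$), Corollary~\ref{cor:isot_klein_bottle} (on mapping classes of $\KleinBottle$), Theorem~\ref{th:diff_act} (giving $\actmap$ as a locally trivial fibration), and standard restriction-to-submanifold fibration techniques. Statement~(1) is a quoted classical fact.

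For statement~(2), the main point is to prove $\dif(\alpha)=\alpha$ for every $\dif\in\Stabilizer{\func}$; the isotopy dichotomy then follows from Corollary~\ref{cor:isot_klein_bottle}(2), since such a $\dif$ preserves both $\kMob{1}$ and $\kMob{2}$. Because $\dif$ permutes contours within each level set, $\dif(\alpha)$ is a regular contour at level $\func(\alpha)$, and since $\dif$ is a diffeomorphism it must again split $\KleinBottle$ into two open M\"{o}bius bands. I expect the hard step to be ruling out a second such contour in the same level set; the argument should proceed by exploiting the tree structure of $\krgraphf$ from Lemma~\ref{intro:lm:top_decomposition}\ref{intro:klein_topmebius} to show that only the edge carrying $\alpha$ can support a M\"{o}bius-band-splitting at level $\func(\alpha)$.

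For statement~(3), local triviality of $\rho_\alpha$ comes from a tubular-neighborhood construction: in a collar $\alpha\times(-\epsilon,\epsilon)\hookrightarrow\KleinBottle$ in which $\func$ is the second coordinate, any $\phi\in\Diff(\alpha)$ near $\id_\alpha$ extends to the diffeomorphism $(x,t)\mapsto(\phi_t(x),t)\in\Stabilizer{\func}$, where $\phi_t$ is a collar interpolation from $\phi$ at $t=0$ to $\id_\alpha$ near $t=\pm\epsilon$; this yields a local section of $\rho_\alpha$. Surjectivity of $\rho_\alpha$ from $\StabilizerIsotId{\func}$ onto $\DiffPl(\alpha)$ follows from the same construction applied to a path from $\id_\alpha$ to any given $\phi\in\DiffPl(\alpha)$. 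The isomorphisms $\pi_k(\StabilizerIsotId{\func},\Stabilizer{\func,\alpha})\cong\pi_k\DiffPl(\alpha)\cong\pi_k\alpha$ then follow from the long exact sequence of the fibration combined with statement~(1).

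For statement~(4), let $\dif\in\Stabilizer{\func,\alpha}$. Since $\alpha$ is a regular contour, $\func$ is a submersion across $\alpha$, so in a tubular neighborhood the two sides $\{\func>\func(\alpha)\}$ and $\{\func<\func(\alpha)\}$ of $\alpha$ are canonically distinguished, with each contained in a single $\kMob{i}$. Any $\dif$ fixing $\alpha$ pointwise and preserving $\func$ must preserve this sign distinction, hence preserves each $\kMob{i}$, so $\dif\in\DiffPlFix{\KleinBottle}{\alpha}=\DiffIdFix{\KleinBottle}{\alpha}$ by Theorem~\ref{th:DiffKleinBottle}, giving $\dif\in\StabilizerIsotId{\func,\alpha}$. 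Statement~(5) is then immediate: contractibility of $\DiffPlFix{\KleinBottle}{\alpha}$ yields path-connectedness of $\OrbitPl{\func,\alpha}$, and Theorem~\ref{th:diff_act}\ref{enum:th:diff_act:DidOf_fibr} combined with statement~(4) identifies $\OrbitComp{\func}{\func,\alpha}=\actmap(\DiffIdFix{\KleinBottle}{\alpha})=\OrbitPl{\func,\alpha}$.
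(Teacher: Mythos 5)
Your treatments of statements (1), (3), (4) and (5) are essentially correct and run parallel to the paper's: for (3) the collar extension $(x,t)\mapsto(\phi_t(x),t)$ is the standard construction behind the fibration result the paper cites, and your proof of (4) via the sign of $\func-\func(\alpha)$ on the two sides of $\alpha$ is exactly the mechanism the paper uses (there to prove part of (2), here reproduced for diffeomorphisms fixed on $\alpha$).

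The gap is in statement (2), and it is twofold. First, the step you dismiss in passing --- ``such a $\dif$ preserves both $\kMob{1}$ and $\kMob{2}$'' --- is precisely the nontrivial content of the paper's proof. Knowing $\dif(\alpha)=\alpha$ only gives that $\dif$ permutes $\{\kMob{1},\kMob{2}\}$, and the exchange is perfectly possible for diffeomorphisms preserving $\alpha$ (the involution $\ktHom$ fixes $\alpha$ pointwise and swaps $\kMob{1}$ and $\kMob{2}$). Ruling it out for $\dif\in\Stabilizer{\func}$ requires the function: since $\alpha$ is a \emph{regular} contour, in an $\func$-regular neighborhood of $\alpha$ one has $\func<\func(\alpha)$ on the $\kMob{1}$-side and $\func>\func(\alpha)$ on the $\kMob{2}$-side, and $\func\circ\dif=\func$ forbids swapping the sides. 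Without this, Corollary~\ref{cor:isot_klein_bottle} only places the isotopy class of $\dif$ in $\{\id_{\KleinBottle},\kyHom,\ktHom,\ktHom\circ\kyHom\}$, not in the asserted two-element set.

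Second, your proposed route to $\dif(\alpha)=\alpha$ is left as an expectation, and the uniqueness claim it rests on (``only the edge carrying $\alpha$ can support a M\"obius-band splitting at level $\func(\alpha)$'') is false. Two disjoint regular contours $\alpha,\beta$ at the same level $c$ can each split $\KleinBottle$ into two open M\"obius bands: they then cobound a cylinder on which $\func-c$ keeps the same sign near both boundary circles, and that cylinder necessarily contains critical points (otherwise $\func$ would attain its extrema on the boundary, forcing $\func\equiv c$ there), so $\alpha$ and $\beta$ sit on \emph{different} edges of the tree $\krgraphf$. One can even build $\func\in\fclassr{\KleinBottle}$ admitting an $\ktHom$-like symmetry in $\Stabilizer{\func}$ that interchanges two such contours; so the tree structure alone cannot yield $\dif(\alpha)=\alpha$ for arbitrary $\dif\in\Stabilizer{\func}$, and you are right that this point genuinely needs an argument (the paper itself passes over it). The workable argument is homological and applies to what the subsequent lemmas actually use, namely $\dif\in\StabilizerIsotId{\func}$: if $\dif(\alpha)=\beta\neq\alpha$, then comparing numbers of critical points forces $\dif$ to carry $\kMob{1}$ onto the component of $\KleinBottle\setminus\beta$ containing $\kMob{2}$, hence $\dif_{*}(\klongi{1})=\pm\klongi{2}$ in $H_1(\KleinBottle,\bZ)$, contradicting $\dif\simeq\id_{\KleinBottle}$.
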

\begin{proof}


Statement~\ref{enum:lm:prop:Da:Circle} is well known and elementary.

\ref{enum:lm:prop:Da:Sf_DplKa}
Let $c = \func(\alpha)$ be the value of $\func$ on $\alpha$, $\Uman$ be a $\func$-regular neighborhood of $\alpha$.
Denote also $\Uman_i = (\Mobius_i\cap\Uman)\setminus\alpha$, $i=1,2$.
Then, not loosing generality, one can assume that $\func(\Uman_1) \subset (-\infty;c)$ and $\func(\Uman_2) \subset (c;+\infty)$.

Now let $\dif\in\Stabilizer{\func}$.
Then $\dif(\alpha) = \alpha$ and therefore $\dif(\Uman) = \Uman$.
Suppose $\dif$ exchanges $\kMob{1}$ and $\kMob{2}$, then it must also exchange $\Uman_1$ and $\Uman_2$.
However, since $\func\circ\dif=\func$, we will obtain that
\[
    (-\infty;c)             \ \supset \
    \func(\Uman_1)          \ = \
    \func\circ\dif(\Uman_1) \ = \
    \func(\Uman_2)          \ \subset \
    (c;+\infty)
\]
which is impossible.
Hence $\dif\in\DiffPlInv{\KleinBottle}{\alpha}$.

All other statements are trivial.

\ref{enum:lm:prop:Da:pik_S_Sa}
The fact that $\rho_{\alpha}$ is a locally trivial fibration is a particular case of~\cite[Theorem~8.2]{KhokhliukMaksymenko:IndM:2020}.

\ref{enum:lm:prop:Da:Spla}
Recal that by Theorem~\ref{th:DiffKleinBottle}, $\DiffIdFix{\KleinBottle}{\alpha} = \DiffPlFix{\KleinBottle}{\alpha}$ and this group is contractible.
Hence
\begin{align*}
    \StabilizerIsotId{\func,\alpha} :&\!=
    \Stabilizer{\func} \cap \DiffIdFix{\KleinBottle}{\alpha}
    \stackrel{\text{Th.\,\ref{th:DiffKleinBottle}}}{=\!=}
    \Stabilizer{\func} \cap \DiffPlFix{\KleinBottle}{\alpha}
    =
         \Stabilizer{\func} \cap \bigl(\DiffPlInv{\KleinBottle}{\alpha}
            \cap
        \DiffFix{\KleinBottle}{\alpha} \bigr) \\
    &\!=
         \bigl(\Stabilizer{\func} \cap \DiffPlInv{\KleinBottle}{\alpha} \bigr)
            \cap
        \DiffFix{\KleinBottle}{\alpha}
     \stackrel{\ref{enum:lm:prop:Da:Sf_DplKa}}{=}
        \Stabilizer{\func} \cap \DiffFix{\KleinBottle}{\alpha} =\Stabilizer{\func,\alpha}.
\end{align*}



\ref{enum:lm:prop:Da:Opla}
Note that
\[
    \OrbitComp{\func}{\func,\alpha} = p(\DiffIdFix{\KleinBottle}{\alpha})
    \stackrel{\text{Th.\,\ref{th:DiffKleinBottle}}}{=\!=}
    p(\DiffPlFix{\KleinBottle}{\alpha}) =
    \OrbitPl{\func,\alpha}.
\]
Moreover, since $\DiffPlFix{\KleinBottle}{\alpha}$ is path connected, $\OrbitPl{\func,\alpha}$ is path connected as well, and therefore it coincides with $\OrbitPlComp{\func}{\func,\alpha}$.
\end{proof}

To simplify notations we further put:
\begin{align*}
\xS &:= \StabilizerIsotId{\func},&
\xD &:= \DiffId(\KleinBottle),&
\xO &:= \OrbitComp{\func}{\func},\\
\xSa &:= \StabilizerIsotId{\func,\alpha}, &
\xDa &:= \DiffIdFix{\KleinBottle}{\alpha}, &
\xOa &:= \OrbitComp{\func}{\func,\alpha}.
\end{align*}
\begin{subremark}\label{rem:interpr_pi1_D_S1}\rm
Let $\pu\in\alpha$ be any point.
We will discuss now the isomorphisms~\eqref{equ:pi_k_S_Sa} for $k=1$:
\[
    \pi_1(\xS,\xSa,\id_{\KleinBottle}) \xrightarrow{~\rho_{\alpha}~}
    \pi_1\bigl(\DiffPl(\alpha),\id_{\Circle}\bigr)  \xrightarrow{~j^{-1}~}
    \pi_1(\alpha, \pu) \cong \bZ.
\]

Let $\gamma\colon[0;1]\to\DiffPl(\alpha)$ be loop at $\id_{\Circle}$, i.e.\ $\gamma(0)=\gamma(1)=\id_{\Circle}$.
Then $j^{-1}[\gamma]$ is represented by the loop $\gamma^{\pu}\colon [0;1]\to\alpha$, $\gamma^{\pu}(t) = \gamma(t)(\pu)$, so it associates to $\gamma$ the ``\term{trace}`` of the point $\pu$.

Moreover, since $\rho_{\alpha}$ is the restriction to $\alpha$ map, it follows that the composition of isomorphisms $\rho_{\alpha}\circ j^{-1}$ has a similar description.
Namely, let $\delta\colon\bigl([0;1], 1, 0\bigr) \to(\xS,\xSa,\id_{\KleinBottle})$ be a continuous map.
Then $\rho_{\alpha}\circ j^{-1}[\delta]$ is represented by the loop $\delta^{\pu}\colon [0;1]\to\xS$, $\delta^{\pu}(t) = \delta(t)(\pu)$, so it again associates to $\delta$ the ``\term{trace}`` of the point $\pu$.

In particular, if $\pu$ makes one revolution along $\alpha$ under $\delta$, then $\delta$ represents a generator of $\pi_1\pi_1(\xS,\xSa)$.
\end{subremark}

The first part of Theorem~\ref{intro:th:acylcic_mobius_orbit} is contained in the following lemma:
\begin{lemma}\label{lm:inc_Oa_O}
The inclusion $j\colon\xOa \subset \xO$ is a homotopy equivalence.
\end{lemma}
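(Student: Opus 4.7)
The plan is to show that $j$ is a weak homotopy equivalence; since both $\xO$ and $\xOa$ have the homotopy type of a CW-complex by Theorem~\ref{th:diff_act}\ref{enum:th:diff_act:Of_Freshet_manif}, this suffices. Both spaces are path connected ($\xOa$ by Lemma~\ref{lm:prop:Da}\ref{enum:lm:prop:Da:Opla}), so $\pi_0 j$ is trivially an isomorphism. By Theorem~\ref{th:hom_type:stabilizers}, since $\KleinBottle$ is not one of the three exceptional surfaces $\Sphere$, $D^2$, $\Circle\times[0;1]$, both $\StabilizerId{\func}$ and $\StabilizerId{\func,\alpha}$ are contractible. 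Hence Corollary~\ref{cor:pik_Of} applies to both triples $(\KleinBottle,\func,\varnothing)$ and $(\KleinBottle,\func,\alpha)$ and gives $\pi_k\xO \cong \pi_k\xD$ and $\pi_k\xOa \cong \pi_k\xDa$ for every $k\geq 2$. Since $\xD\simeq SO(2)$ and $\xDa$ is contractible (Theorem~\ref{th:DiffKleinBottle}), both groups vanish for $k\geq 2$, reducing the problem to showing that $j_*\colon\pi_1\xOa\to\pi_1\xO$ is an isomorphism.

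By naturality of connecting homomorphisms, the short exact sequences of Corollary~\ref{cor:pik_Of} assemble into the commutative ladder
\[
\begin{array}{ccccccccc}
1 & \to & 0=\pi_1\xDa & \to & \pi_1\xOa & \xrightarrow{\partial_a} & \pi_0\xSa & \to & 1 \\
  &     & \downarrow  &     & \downarrow &                          & \downarrow &     &   \\
1 & \to & \bZ=\pi_1\xD & \to & \pi_1\xO  & \xrightarrow{\partial}   & \pi_0\xS  & \to & 1
\end{array}
\]
with middle vertical $j_*$ and right vertical induced by the inclusion $\xSa\subset\xS$; in particular $\partial_a$ is already an isomorphism. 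Moreover, injectivity of $\pi_1\xD\to\pi_1\xO$ in the bottom row together with $\pi_2\xO=0$ force $\pi_1\xS=0$ in the long exact sequence of $\xS\to\xD\to\xO$, so the long exact sequence of the restriction fibration $\xSa\to\xS\to\DiffPl(\alpha)\simeq\Circle$ from Lemma~\ref{lm:prop:Da}\ref{enum:lm:prop:Da:pik_S_Sa} collapses to another short exact sequence
\[
1 \to \bZ = \pi_1\DiffPl(\alpha) \to \pi_0\xSa \to \pi_0\xS \to 1.
\]
Substituting $\pi_1\xOa\cong\pi_0\xSa$ in the upper row shows that $\pi_1\xOa$ and $\pi_1\xO$ are two extensions of $\pi_0\xS$ by $\bZ$ and that $j_*$ induces the identity on the quotients $\pi_0\xS$.

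By the short five lemma it thus suffices to show that $j_*$ maps the $\bZ$-subgroup of $\pi_1\xOa$ isomorphically onto the $\bZ$-subgroup of $\pi_1\xO$. The generator of the latter is the image under $p$ of the loop $\gamma(t)=\kSact_{e^{2\pi i t}}\in\xD$, which generates $\pi_1\xD$ by Corollary~\ref{cor:isot_klein_bottle}(3). The generator of $\bZ\subset\pi_0\xSa$ is $[\phi]$ with $\phi:=\tilde\gamma(1)$, where $\tilde\gamma\colon[0;1]\to\xS$ is any lift of the rotation loop $t\mapsto\restr{\kSact_{e^{2\pi i t}}}{\alpha}$ in the fibration of Lemma~\ref{lm:prop:Da}\ref{enum:lm:prop:Da:pik_S_Sa}. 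The key trick is to note that the path
\[
\mu(t) := \tilde\gamma(t) \circ \kSact_{e^{2\pi i t}}^{-1}
\]
takes values in $\xDa$, because its restriction to $\alpha$ cancels to $\id_\alpha$, and connects $\id_\KleinBottle$ to $\phi$; by contractibility of $\xDa$ (Theorem~\ref{th:DiffKleinBottle}) it is homotopic rel endpoints to any other such path in $\xDa$, and so realizes $\partial_a^{-1}[\phi]$. Using $\func\circ\tilde\gamma(t)=\func$ one then obtains
\[
j_*\bigl(\partial_a^{-1}[\phi]\bigr) = \bigl[\,t\mapsto \func\circ\mu(t)\,\bigr] = \bigl[\,t\mapsto \func\circ\kSact_{e^{2\pi i t}}^{-1}\,\bigr] = [p\circ\gamma]^{-1},
\]
so $j_*$ sends the generator of $\bZ\subset\pi_1\xOa$ to the inverse of the generator of $\bZ\subset\pi_1\xO$, which is an isomorphism of $\bZ$. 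The main obstacle is precisely this explicit trace: the identification rests on choosing $\mu$ cleverly so that the contributions of $\tilde\gamma$ (inside the stabilizer) and $\kSact$ (on the ambient diffeomorphism side) cancel on $\alpha$, which is exactly what Section~\ref{sect:klein-bottle} is built to supply. Everything else is a routine application of Corollary~\ref{cor:pik_Of} and the five lemma.
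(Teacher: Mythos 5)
Your proof is correct and reaches the same conclusion, but the algebraic packaging is genuinely different from the paper's. The paper works with the \emph{relative} homotopy groups $\pi_1(\xDa,\xSa)$ and $\pi_1(\xD,\xS)$, builds a commutative diagram out of the exact sequences of the pairs $(\xS,\xSa)$ and $(\xD,\xS)$, extracts an intermediate homomorphism $\eta\colon\pi_1(\xS,\xSa)\to\pi_1\xD$, and proves $\eta$ is an isomorphism by exhibiting a locally supported isotopy $\Sigma$ (a full revolution inside an $\func$-regular neighborhood of $\alpha$), connecting $\Sigma_1$ to $\id$ by an abstract path $\Delta$ in $\xDa$, and reading off the homotopy class of the loop $\Delta\#\Sigma^{-1}$ from the trace of a point $u\in\alpha$ via Corollary~\ref{cor:isot_klein_bottle}(3). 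You instead stay with the absolute short exact sequences of Corollary~\ref{cor:pik_Of}, splice in the extension $1\to\bZ\to\pi_0\xSa\to\pi_0\xS\to1$ coming from the fibration $\rho_\alpha\colon\xS\to\DiffPl(\alpha)$ of Lemma~\ref{lm:prop:Da}\ref{enum:lm:prop:Da:pik_S_Sa}, and then hand-compute the generator comparison by exhibiting the explicit path $\mu(t)=\tilde\gamma(t)\circ\kSact_{e^{2\pi it}}^{-1}$ in $\xDa$, so that $\func\circ\tilde\gamma(t)=\func$ makes the image $j_*\bigl(\partial_a^{-1}[\phi]\bigr)=[p\circ\gamma]^{-1}$ immediate. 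Your route avoids the relative groups entirely, uses the global circle action $\kSact$ rather than a locally supported rotation, and replaces the trace-of-a-point argument by a one-line algebraic cancellation; the paper's version has the advantage of not invoking the global $\Circle$-action, which keeps the construction local and therefore closer in spirit to the general surface arguments of the cited papers. Both proofs rest on the identical geometric fact: the rotation around $\alpha$ generates $\pi_1\DiffId(\KleinBottle)$ and, after restriction to $\alpha$, also generates the $\bZ$ sitting in $\pi_0\xSa$, and the diagram then closes by the five lemma. One small remark: your deduction that $\pi_1\xS=0$ via the fibration $\xS\to\xD\to\xO$ is unnecessarily indirect; it is immediate from the contractibility of the components of $\StabilizerIsotId{\func}$ (Theorem~\ref{th:hom_type:stabilizers}), which the paper invokes at this point.
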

\begin{proof}
1) First we show that in order to prove the lemma it suffices to verify that the inclusion of pairs $\incDaSa\colon(\xDa,\xSa) \subset (\xD,\xS)$ yields an isomorphism of the relative fundamental groups:
\begin{equation}\label{equ:j1_DaSa_DS_iso}
    \incDaSa\colon \pi_1(\xDa,\xSa) \to \pi_1(\xD,\xS).
\end{equation}

Indeed, by statements \ref{enum:th:diff_act:Of_Freshet_manif} and \ref{enum:th:diff_act:DO_fibr} of Theorem~\ref{th:diff_act} the map~\eqref{equ:p_DK_Of} as well as its restriction $\restr{\actmap}{\xDa}\colon\xDa\to\xOa$ are locally trivial fibrations with fibers $\xS$ and $\xSa$ respectively.
In particular, $\actmap$ also induces isomorphisms of homotopy groups  $\pi_k(\xD,\xS) \cong \pi_k\xO$ and $\pi_k(\xDa,\xSa) \cong \pi_k\xOa$ for all $k\geq1$.

Since $\xD$ is homotopy equivalent to the circle (Theorem~\ref{th:DiffKleinBottle}), and $\xDa$ as well as connected components of $\xS$ and $\xSa$ are contractible (Theorem~\ref{th:DiffKleinBottle} and Theorem~\ref{th:hom_type:stabilizers}), it easily follows from exact sequences of the corresponding homotopy groups of those fibrations that $\xOa$ and $\xO$ are aspherical.

Moreover, again by statement \ref{enum:th:diff_act:Of_Freshet_manif} of Theorem~\ref{th:diff_act}, $\xOa$ and $\xO$ are homotopy equivalent to some CW-complexes, whence their homotopy types are determined by the corresponding fundamental groups.

Thus we just need to prove that the inclusion $j$ yields an isomorphism $j_1\colon\pi_1\xOa \cong \pi_1\xO$.
Finally, replacing these fundamental groups with the corresponding relative groups we are reduced to the proof that $\incDaSa$ is an isomorphism.

2) The following lemma allows to reduce the problem further.
\begin{sublemma}
We have the following commutative diagram:
\[
\xymatrix@R=1.5em{
    {\underbrace{\pi_1\xS}_{0}}
        \ar[r] &
    {\underbrace{\pi_1(\xS,\xSa)}_{\pi_1\DiffPl(\alpha) = \bZ}}
        \ar[r]^-{\bdSSa} \ar@{-->}[dd]_-{\eta}^-{\cong\,?} &
    \pi_0\xSa
        \ar[r]^-{\incSa} &
    \pi_0\xS
        \ar@{=}[dd] \ar[r] &
    {\underbrace{\pi_0(\xS,\xSa)}_{\pi_0\DiffPl(\alpha) = 0}}  \\
        & & \pi_1(\xDa,\xSa) \ar[u]^-{\bdDaSa}_-{\cong} \ar[d]_-{\incDaSa}^-{\cong\,?} &  \\
    {\overbrace{\pi_1\xS}^{0}} \ar[r] &
        {\overbrace{\pi_1\xD}^{\bZ}} \ar[r]^-{\fogD} &
        \pi_1(\xD,\xS) \ar[r]^-{\bdDS} &
        \pi_0 \xS \ar[r] &
    {\overbrace{\pi_0\xD}^{0}}
}
\]
in which the upper and lower rows are parts of exact sequences of the pairs $(\xS,\xSa)$ and $(\xD,\xS)$ respectively.
\end{sublemma}
\begin{proof}
Note note that the right square of that diagram is commutative, i.e.
\begin{equation}\label{equ:id_di}
    \bdDS\circ\incDaSa = \incSa\circ\bdDaSa.
\end{equation}
Indeed, take any representative $\gamma\colon[0;1]\to\xDa$ of some element of $\pi_1(\xDa,\xSa)$, so $\gamma(0)=\id_{\KleinBottle}$ and $\gamma(1)\in\xSa$.
Then it is evident, that both sides of~\eqref{equ:id_di} associate to $\gamma$ the isotopy class of $\gamma(1)$ regarded as an element of $\xS$.

Now, since $\xDa$ is contractible, $\bdDaSa$ is an isomorphism, whence the middle vertical arrow $\incDaSa\circ\bdDaSa^{-1}$ is well-defined.
Therefore it sends $\ker(\incSa)$ into $\ker(\bdDS)$, and thus induces a unique homomorphism $\eta\colon\pi_1(\xS,\xSa)\to\pi_1\xD$.
\end{proof}

We will show that \term{$\eta\colon\pi_1(\xS,\xSa)\to\pi_1\xD$ is an isomorphism}.
Then the Five Lemma will guarantee that $\incDaSa\circ\bdDaSa^{-1}\colon\pi_0\xSa\to \pi_1(\xD,\xS)$ is an isomorphism as well.
Hence $\incDaSa$ will also be an isomorphism, which will finish the proof of Lemma~\ref{lm:inc_Oa_O}.

\begin{figure}[htbp]
\includegraphics[height=4cm]{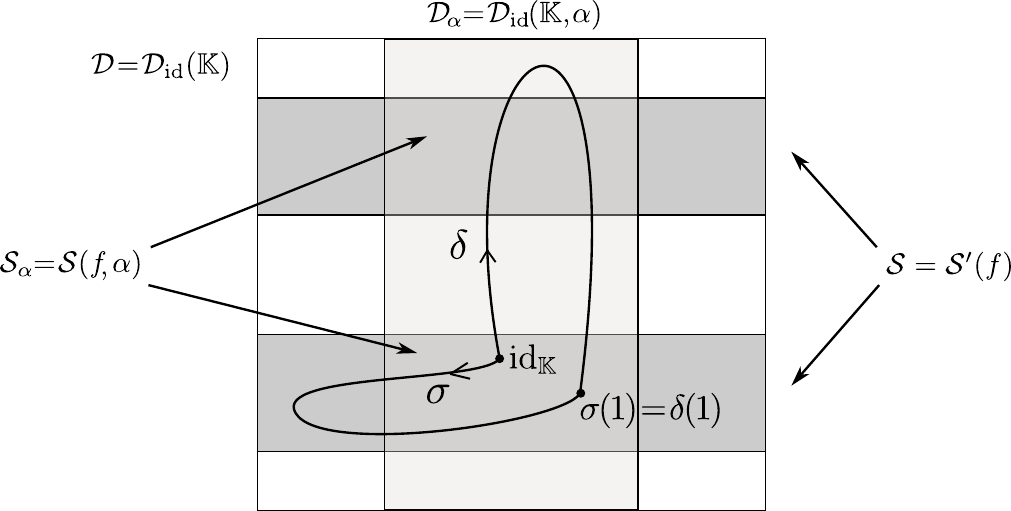}
\caption{Geometrical meaning of $\eta$: $\eta(\genSSa) = \genInv\#\genSSa^{-1}$}
\label{fig:eta}
\end{figure}

3) First let us describe the geometrical meaning of homomorphism $\eta$, see Figure~\ref{fig:eta}.

Let $\genSSa\colon\bigl([0;1],1,0\bigr)\to\bigl(\xS,\xSa,\id_{\KleinBottle}\bigr)$ be a representative of some element of $\pi_1(\xS,\xSa)$.
Then $\bdSSa(\genSSa) = [\genSSa(1)] \in\pi_0\xSa$ is a path component of the end $\genSSa(1)$ of that path in $\xSa$.

Since $\xSa \subset \xDa$ and $\xDa$ is contractible, there exists another path $\genInv\colon[0;1]\to\xDa$ such that $\genInv(0)=\id_{\KleinBottle}$ and $\genInv(1)=\genSSa(1)$.
In other words, $\genInv$ represents some element of $\pi_1(\xDa,\xSa)$,
As $\bdDaSa$ is an isomorphism, that element must be $\bdDaSa^{-1}\circ\bdSSa(\genSSa) \in \pi_1(\xDa,\xSa)$.

Taking further the composition of $\genInv$ with the inclusion $\xDa\subset\xD$, we see that $\genInv$ can be regarded as a path $\genInv\colon\bigl([0;1],1,0\bigr)\to\bigl(\xD,\xS,\id_{\KleinBottle}\bigr)$ representing the element
\begin{equation}\label{equ:genInv_in_pi1DS}
    \incDaSa\circ\bdDaSa^{-1}\circ\bdSSa(\genSSa) \in \pi_1(\xD,\xS).
\end{equation}

Consider the composition of paths: $\genD=\genInv\#\genSSa^{-1}\colon\bigl([0;1],1,0\bigr) \to \bigl(\xD,\xS,\id_{\KleinBottle}\bigr)$.
Since the image of $\genSSa$ is contained in $\xS$, $\genD$ represents the same element of $\pi_1(\xD,\xS)$ as~\eqref{equ:genInv_in_pi1DS}.

On the other hand, since the ends of $\genSSa$ and $\genInv$ coincide, $\genD=\genInv\#\genSSa^{-1}$ is in fact a loop in $\xD$ at $\id_{\KleinBottle}$, such that $\fogD([\genInv\#\genSSa^{-1}])=[\genD]\in\pi_1(\xD,\xS)$.

As $\fogD$ is injective, we should have that $\eta(\genSSa) = [\genInv\#\genSSa^{-1}]$.

4) It remains to show that $\eta$ sends some generator of $\pi_1(\xS,\xSa)\cong\bZ$ to generator of $\pi_1\xD\cong \bZ$.

Let $\Uman$ be a $\func$-regular neighborhood of $\alpha$.
Since $\alpha$ is a regular contour of $\func$, there exists an embedding $\phi\colon\Circle\times[-1;1]\to\KleinBottle$ such that $\phi(\Circle\times\{s\})$, $s\in[-1;1]$, is a regular contour of $\func$, and $\alpha = \phi(\Circle\times\{0\})$.

Let $\theta\colon[-1;1]\to[0;1]$ be a $\Cinfty$ function such that $\theta(0)=1$ and $\theta=0$ on some neighborhood of the boundary $\{-1, 1\}$.
Define the isotopy
\[
    G\colon\Circle\times[-1;1]\times[0;1]\to\Circle\times[-1;1],
    \qquad
    G(z,s,t) = (ze^{2\pi i t \theta(s)}, s).
\]
Evidently, it has the following properties:
\begin{enumerate}[label={\rm(\alph*)}]
   \item\label{enum:Gprop:fix_bd} $G$ is fixed near $\Circle\times\{\pm1\}$;
   \item\label{enum:Gprop:inv_circles} $G$ leaves invariant the circles $\Circle\times\{s\}$, $s\in[-1;1]$;
   \item\label{enum:Gprop:full_rot_0} $G$ makes a full revolution of $\Circle\times\{0\}$.
\end{enumerate}
Property~\ref{enum:Gprop:fix_bd} allows to define the following isotopy of $\KleinBottle$:
\[
    \Sigma\colon\KleinBottle\times[0;1]\to\KleinBottle,
    \qquad
    \Sigma(x,t) =
    \begin{cases}
        x, & x \in\KleinBottle\setminus\Uman, \\
        \phi\circ G_t\circ\phi^{-1}(x), & x \in \Uman.
    \end{cases}
\]
It follows that $\Sigma$ is fixed out of $\Uman$.
Moreover, due to~\ref{enum:Gprop:inv_circles}, $\Sigma$ must leave invariant the curves $\phi(\Circle\times\{s\})$ being contours of $\func$.
Hence $\Sigma_t\in\StabilizerIsotId{\func}\subset\xS$ for all $t\in[0;1]$.
Also, $\Sigma_0=\id_{\KleinBottle}$, while $\Sigma_1$ is fixed on $\alpha$ and thus belongs to $\StabilizerIsotId{\func,\alpha}=\xSa$.

Thus $\Sigma$ represents a path $\genSSa\colon\bigl([0;1],1,0\bigr)\to\bigl(\xS,\xSa,\id_{\KleinBottle}\bigr)$.
Recall that by~\eqref{equ:pi_k_S_Sa} we have the following isomorphisms:
\[
    \pi_1(\xS,\xSa) \xrightarrow{~\rho_{\alpha}~}
    \pi_1\DiffId(\alpha) \xrightarrow{~*~}  \pi_1 \alpha \cong \bZ
\]
where the first one is induced by the restriction map $\rho_{\alpha}\colon\Stabilizer{\func} \to \Diff(\alpha)$, $\rho_{\alpha}(\dif) = \restr{\dif}{\alpha}$, and the second one is induced by the inclu

Since $\restr{\Sigma_t}{\alpha}$ makes a full revolution of $\alpha$, it follows from Remark~\ref{rem:interpr_pi1_D_S1} that $\genSSa$ represents a generator of $\pi_1(\xS,\xSa)$.

Now, $\genSSa(1) = \Sigma_1$ is fixed on $\alpha$, and there exists another isotopy $\Delta\colon\KleinBottle\times[0;1]\to\KleinBottle$ such that $\Delta_0=\id_{\KleinBottle}$, $\Delta_t$ is fixed on $\alpha$, and $\Delta_1 = \Sigma_1$.
So it represents a path $\genInv\colon\bigl([0;1],1,0\bigr)\to\bigl(\xD,\xS,\id_{\KleinBottle}\bigr)$, and then $\eta(\genSSa) = \genInv\#\genSSa^{-1}$, so it is represented by the isotopy
\[
    K\colon\KleinBottle\times[0;1]\to\KleinBottle,
    \qquad
    K(x,t) =
    \begin{cases}
        \Delta(x,2t), & t\in[0;\tfrac{1}{2}], \\
        \Sigma(x,2-2t), & t\in[\tfrac{1}{2};1].
    \end{cases}
\]
being a loop in $\DiffId(\KleinBottle)$.
If $u\in\alpha$ is an arbitrary point, then the map $K^u\colon[0;1]\to\alpha \subset \KleinBottle$, $K^u(t)=K(u,t)$, represents a generator of $\pi_1\alpha$.
Hence by statement 3) of Corollary~\ref{cor:isot_klein_bottle}, $K$ is a generator of $\pi_1\DiffId(\KleinBottle)$.

Thus $\eta$ sends a generator of $\pi_1(\xS,\xSa)$ onto a generator of $\pi_1\xD$, and therefore $\eta$ is an isomorphism.
\end{proof}

For the proof of the second part of Theorem~\ref{intro:th:acylcic_mobius_orbit} we will also introduce several notations.
For $i=1,2$ let $\func_i = \restr{\func}{\kMob{i}}\colon\kMob{i}\to\bR$,
\begin{align*}
\xSM{i} &:= \Stabilizer{\func_i,\partial\kMob{i}}, &
\xDM{i} &:= \DiffFix{\func_i}{\partial\kMob{i}}, &
\xOM{i} &:= \Orbit{\func_i,\partial\kMob{i}}, \\
\xSanb  &:= \StabilizerNbh{\func,\alpha}, &
\xSMnb{1}  &:= \StabilizerNbh{\func_1,\partial\kMob{1}}, &
\xSMnb{2}  &:= \StabilizerNbh{\func_2,\partial\kMob{2}}.
\end{align*}

Denote by $\nbinc\colon\xSanb\subset\xSa$ and $\nbinc_i\colon\xSMnb{i}\subset\xSM{i}$, $i=1,2$, the natural inclusions, being by Lemma~\ref{lm:regular-invariant-neigh-homotopy-eq} homotopy equivalences.
Define also the following restriction maps
\begin{align*}
    \rstdif&\colon\xDa \to \xDM{1} \times \xDM{2}, &
    \rstdif(\dif) &= \bigl(\restr{\dif}{\kMob{1}},\restr{\dif}{\kMob{2}}\bigr),  \\
    \rstfunc&\colon \xOa \to \xOM{1} \times \xOM{2}, &
    \rstfunc(\gfunc) &= \bigl(\restr{\gfunc}{\kMob{1}},\restr{\gfunc}{\kMob{2}}\bigr).
\end{align*}
Finally, let $\actmap_i\colon\xDM{i}\to\xOM{i}$, $\actmap_i(\dif) = \func_i\circ\dif$, be the corresponding action map~\eqref{intro:equ:action} for $\kMob{i}$.
Then it is evident that the following diagram is commutative:
\[
    \xymatrix{
        \xDa \ar[r]^-{\rstdif} \ar[d]_-{\actmap} &
        \xDM{1} \times \xDM{2} \ar[d]^-{\actmap_1\times\actmap_2} \\
        \xOa \ar[r]^-{\rstfunc} & \xOM{1} \times \xOM{2}
    }
\]
\begin{lemma}\label{lm:rest_Oa_O1_O2}
The inclusion $\rstfunc\colon \xOa \to \xOM{1} \times \xOM{2}$ is a homotopy equivalence.
\end{lemma}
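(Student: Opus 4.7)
The approach is to deduce the homotopy equivalence of the base spaces from homotopy equivalences on the total spaces and the fibers of the natural action fibrations on either side. Concretely, I would consider the commutative diagram of principal fibrations (restricting the right-hand fibrations to the identity components of their total spaces, so that their bases are the path components of $(\func_1,\func_2)$)
\[
    \xymatrix@C=3em{
        \xSa \ar[r] \ar[d]_-{\tau} &
        \xDa \ar[r]^-{\actmap} \ar[d]_-{\rstdif} &
        \xOa \ar[d]^-{\rstfunc} \\
        \xSM{1}\times\xSM{2} \ar[r] &
        \xDM{1}\times\xDM{2} \ar[r]^-{\actmap_1\times\actmap_2} &
        \xOM{1}\times\xOM{2}
    }
\]
in which the restriction map $\tau$ is well defined because, by Lemma~\ref{lm:prop:Da}\ref{enum:lm:prop:Da:Sf_DplKa}, every element of $\xSa$ leaves both $\kMob{1}$ and $\kMob{2}$ invariant.

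The middle vertical arrow $\rstdif$ is a homotopy equivalence: by Theorem~\ref{th:DiffKleinBottle} the space $\xDa=\DiffPlFix{\KleinBottle}{\alpha}$ is contractible, and by Table~\ref{tbl:hom_types:DidMX} (row~1, since $\chi(\kMob{i})=0<|\partial\kMob{i}|$) so is the identity component of each $\DiffFix{\kMob{i}}{\partial\kMob{i}}$, in which the connected image of $\rstdif$ lies.

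The main step is to show that the fiber map $\tau$ is a homotopy equivalence. For this I would consider the square
\[
    \xymatrix@C=3em{
        \xSanb \ar[r]^-{\tau_{\nbsymbol}} \ar[d]_-{\nbinc}^-{\simeq} &
        \xSMnb{1}\times\xSMnb{2} \ar[d]^-{\nbinc_1 \times \nbinc_2}_-{\simeq} \\
        \xSa \ar[r]^-{\tau} & \xSM{1}\times\xSM{2}
    }
\]
in which the two vertical inclusions are homotopy equivalences by Lemma~\ref{lm:regular-invariant-neigh-homotopy-eq}. The crucial observation is that the top arrow $\tau_{\nbsymbol}$ is in fact an \emph{isomorphism of topological groups}: a diffeomorphism of $\KleinBottle$ that preserves $\func$ and is identically the identity on some neighborhood of $\alpha$ splits uniquely into the pair of its restrictions to $\kMob{1}$ and $\kMob{2}$, and, conversely, any such pair glues back to a $\Cinfty$ diffeomorphism of $\KleinBottle$ that is smooth across $\alpha$ precisely because each piece agrees with $\id$ near $\alpha$. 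Therefore $\tau$ is an HE.

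Having homotopy equivalences on the fibers and on the total spaces of the two fibrations, the long exact sequences of homotopy groups together with the Five Lemma force $\rstfunc$ to induce isomorphisms on $\pi_k$ for all $k\geq 1$. Since $\xOa$ is path-connected by Lemma~\ref{lm:prop:Da}\ref{enum:lm:prop:Da:Opla} and maps into the single path component $\OrbitComp{\func_1}{\func_1}\times\OrbitComp{\func_2}{\func_2}$, the $\pi_0$ bijection is automatic once we interpret $\xOM{1}\times\xOM{2}$ as this component. Finally, Theorem~\ref{th:diff_act}\ref{enum:th:diff_act:Of_Freshet_manif} guarantees that both $\xOa$ and $\xOM{1}\times\xOM{2}$ have the homotopy type of CW complexes, so Whitehead's theorem promotes the weak equivalence $\rstfunc$ to a genuine homotopy equivalence. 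The only step likely to require real care is verifying smoothness of the gluing map that realises the isomorphism $\tau_{\nbsymbol}$; everything else is a formal fibration argument.
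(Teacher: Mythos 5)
Your proof is correct and follows essentially the same route as the paper: the key step in both is that the neighborhood-fixed stabilizer $\xSanb$ splits as a topological group isomorphism onto $\xSMnb{1}\times\xSMnb{2}$, combined with Lemma~\ref{lm:regular-invariant-neigh-homotopy-eq} to pass back to $\xSa$ and $\xSM{i}$, and contractibility of the diffeomorphism groups from Theorem~\ref{th:DiffKleinBottle} and Table~\ref{tbl:hom_types:DidMX}. The only cosmetic difference is in the wrap-up: the paper invokes asphericity of the orbits (Corollary~\ref{cor:pik_Of}) to reduce immediately to $\pi_1$ and chases a single commutative diagram down to $\pi_0$ of the stabilizers, whereas you run the Five Lemma over the whole long exact sequence and finish with Whitehead's theorem — both are valid and lean on the same fibration structure from Theorem~\ref{th:diff_act}.
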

\begin{proof}
The proof consists of several reductions.

1) By Theorem~\ref{th:diff_act}\ref{enum:th:diff_act:OffX__Off}, $\OrbitComp{\func_i}{\func_i}$ coincides with $\OrbitComp{\func_i}{\func_i, \partial\kMob{i}} = \xOM{i}$, $i=1,2$.

2) Also, as $\xOa$, $\xOM{1}$, $\xOM{2}$ are aspherical, see~\eqref{equ:pik_Of}, it suffices to show that $\rstfunc$ induces an isomorphism of the corresponding fundamental groups $\rstfunc_1\colon\pi_1\xOa \to \pi_1\xOM{1} \times \pi_1\xOM{2}$

3) Note that $\actmap$ is a fibration with fiber $\xSa$, while $\actmap_1\times\actmap_2$ is a fibration with fiber $\xSM{1}\times\xSM{2}$.
Since $\xDa$, $\xDM{1}$, and $\xDM{2}$ are contractible and $\nbinc$, $\nbinc_1$ and $\nbinc_2$ are homotopy equivalences, we get the following commutative diagram:
\begin{equation}\label{equ:reduct_pi1O_pi0Snb}
\begin{gathered}
    \xymatrix{
        \pi_1\xOa \ar[r]^-{\rstfunc_1} \ar[d]_-{\partial}^-{\cong} &
        \pi_1\xOM{1} \times \pi_1\xOM{2} \ar[d]^-{\partial_1\times\partial_2}_-{\cong} \\
        \pi_0\xSa \ar[r]^-{\rstdif_0} & \pi_0\xSM{1} \times \pi_0\xSM{2} \\
        \pi_0\xSanb \ar[r]^-{\rstdif_0} \ar[u]^-{\nbinc}_-{\cong}& \pi_0\xSMnb{1} \times \pi_0\xSMnb{2} \ar[u]_-{\nbinc_1\times\nbinc_2}^{\cong}
    }
\end{gathered}
\end{equation}
in which the vertical arrows are isomorphisms.

But the restriction $\restr{\rstdif}{\xSanb}\colon\xSanb\to\xSMnb{1}\times\xSMnb{2}$, $\dif\mapsto\bigl(\restr{\dif}{\kMob{1}},\restr{\dif}{\kMob{2}}\bigr)$, is evidently an isomorphism of topological groups.
Hence the lower arrow $\rstdif_0$ in~\eqref{equ:reduct_pi1O_pi0Snb} is an isomorphism as well.
Therefore two other horizontal arrows~\eqref{equ:reduct_pi1O_pi0Snb} are also isomorphisms.
In particular, so is $\rstfunc_1$, which proves our lemma.
\end{proof}

\section*{Acknowledgments}
This work was supported by a grant from the Simons Foundation (SFI-PD-Ukraine-00014586, B.L.M and S.I.M.)


\def\cprime{$'$} \def\cprime{$'$} \def\cprime{$'$} \def\cprime{$'$}
  \def\cprime{$'$} \def\cprime{$'$} \def\cprime{$'$} \def\cprime{$'$}
  \def\cprime{$'$} \def\cprime{$'$} \def\cprime{$'$} \def\cprime{$'$}
  \def\cprime{$'$} \def\cprime{$'$}

\end{document}